\definecolor{green}{HTML}{2ECC71}
\definecolor{blue}{HTML}{3498DB}
\definecolor{red}{HTML}{E74C3C}
\definecolor{orange}{HTML}{FD6A02}
\def\@endtheorem{\endtrivlist}
\Crefname{paragraph}{\S}{\SS}
\Crefname{equation}{}{}
\Crefname{enumi}{}{}
\Crefname{conditioni}{Condition}{Conditions}
\Crefname{conditionalti}{Condition}{Conditions}
\theoremstyle{plain}
\newtheorem{theorem}{Theorem}
\newtheorem*{theorem*}{Theorem}
\Crefname{theorem}{Theorem}{Theorems}
\Crefname{theoremintro}{Theorem}{Theorems}
\newtheorem{lemma}[theorem]{Lemma}
\Crefname{lemma}{Lemma}{Lemmas}
\newtheorem{proposition}[theorem]{Proposition}
\Crefname{proposition}{Proposition}{Propositions}
\newtheorem{corollary}[theorem]{Corollary}
\Crefname{corollary}{Corollary}{Corollaries}
\Crefname{conjecture}{Conjecture}{Conjectures}
\theoremstyle{remark}
\Crefname{example}{Example}{Examples}
\newtheorem*{example*}{Example}
\Crefname{assumption}{Assumption}{Assumptions}
\newtheorem{definition}[theorem]{Definition}
\Crefname{definition}{Definition}{Definitions}
\Crefname{question}{Question}{Questions}
\newtheorem{remark}[theorem]{Remark}
\Crefname{remark}{Remark}{Remarks}
\DeclarePairedDelimiter{\paren}{\lparen}{\rparen}
\DeclarePairedDelimiter{\bracket}{\lbrack}{\rbrack}
\DeclarePairedDelimiter{\set}{\lbrace}{\rbrace}
\DeclarePairedDelimiter{\abs}{\lvert}{\rvert}
\DeclarePairedDelimiter{\norm}{\lVert}{\rVert}
\DeclarePairedDelimiter{\ceil}{\lceil}{\rceil}
\DeclarePairedDelimiterX{\psh}[2]{\langle}{\rangle}{#1, #2}
\DeclarePairedDelimiterXPP{\Exp}[1]{\exp}{\lparen}{\rparen}{}{#1}
\DeclarePairedDelimiterXPP{\Log}[1]{\log}{\lparen}{\rparen}{}{#1}
\DeclarePairedDelimiterXPP{\Inf}[1]{\inf}{\lbrace}{\rbrace}{}{#1}
\DeclarePairedDelimiterXPP{\Sup}[1]{\sup}{\lbrace}{\rbrace}{}{#1}
\DeclarePairedDelimiterXPP{\Max}[1]{\max}{\lbrace}{\rbrace}{}{#1}
\DeclarePairedDelimiterXPP{\Min}[1]{\min}{\lbrace}{\rbrace}{}{#1}
\DeclareMathOperator{\tr}{Tr}
\DeclareMathOperator{\rk}{rk}
\DeclareMathOperator{\esp}{\mathbf{E}}
\DeclareMathOperator{\prob}{\mathbf{P}}
\DeclareMathOperator{\var}{\mathbf{Var}}
\DeclareMathOperator{\law}{\mathbf{law}}
\DeclareMathOperator{\ent}{\mathbf{Ent}}
\DeclareMathOperator{\fish}{\mathbf{I}}
\newcommand{\given}[1][]{%
  \nonscript\:#1\vert
  \allowbreak
  \nonscript\:
\mathopen{}}
\DeclarePairedDelimiterXPP{\Prob}[1]{\prob}[]{}{#1}
\DeclarePairedDelimiterXPP{\Esp}[1]{\esp}[]{}{#1}
\DeclarePairedDelimiterXPP{\Var}[1]{\var}[]{}{#1}
\DeclarePairedDelimiterXPP{\Law}[1]{\law}[]{}{#1}
\DeclarePairedDelimiterXPP{\Ent}[1]{\ent}[]{}{#1}
\DeclarePairedDelimiterXPP{\Fish}[1]{\fish}[]{}{#1}
\begin{document}

\begin{frontmatter}
\title{Superconvergence phenomenon in Wiener chaoses}
\runtitle{Superconvergence phenomenon in Wiener chaoses}
%\thankstext{T1}{A sample of additional note to the title.}

\begin{aug}
  \author[A]{\fnms{Ronan}~\snm{Herry}\ead[label=e1]{ronan.herry@univ-rennes.fr}\orcid{0000-0001-6313-1372}}
  \author[B]{\fnms{Dominique}~\snm{Malicet}\ead[label=e2]{dominique.malicet@univ-eiffel.fr}\orcid{0000-0003-2768-0125}}
\and
\author[A]{\fnms{Guillaume}~\snm{Poly}\ead[label=e3]{guillaume.poly@univ-rennes.fr}}
\address[A]{IRMAR, Université de Rennes\printead[presep={,\ }]{e1,e3}}

\address[B]{LAMA, Université Guistave Eiffel\printead[presep={,\ }]{e2}}
\end{aug}

\begin{abstract}
We establish, in full generality, an unexpected phenomenon of strong regularization along normal convergence on Wiener chaoses.
Namely, for every sequence of chaotic random variables, convergence in law to the Gaussian distribution is automatically upgraded to \emph{superconvergence}:  the regularity of the densities increases along the convergence, and all the derivatives converges uniformly on the real line.
Our findings strikingly strengthen known results regarding modes of convergence for normal approximation on Wiener chaoses.
Without additional assumptions, quantitative convergence in total variation is established in \cite{nourdin2009stein}, and later on amplified to convergence in relative entropy in \cite{nourdin2014entropy}.

Our result is then extended to the multivariate setting, and for polynomial mappings of a Gaussian field provided the projection on the Wiener chaos of maximal degree admits a non-degenerate Gaussian limit.
While our findings potentially apply to any context involving polynomial functionals of a Gaussian field, we emphasize, in this work, applications regarding:
   improved Carbery--Wright estimates near Gaussianity;
   normal convergence in entropy and in Fisher information;
   \textit{superconvergence} for the spectral moments of Gaussian orthogonal ensembles;
   moments bounds for the inverse of strongly correlated Wishart-type matrices;
   \textit{superconvergence} in the Breuer--Major Theorem.

Our proofs leverage Malliavin's historical idea to establish smoothness of the density via the existence of negative moments of the Malliavin gradient, and we further develop a new paradigm to study this problem.
Namely, we relate the existence of negative moments to some explicit spectral quantities associated with the Malliavin Hessian.
This link relies on an adequate choice of the Malliavin gradient, which provides a novel decoupling procedure of independent interest.
Previous attempts to establish convergence beyond entropy have imposed restrictive assumptions ensuring finiteness of negative moments for the Malliavin derivatives, as in \cite{hu2014convergence,HNTXBreuerMAjor,nourdin2016fisher} or \cite[proposition 5.5]{ledoux2015stein}.
Our analysis renders these assumptions superfluous.

The terminology \emph{superconvergence} was introduced  in \cite{BercoviciVoiculescu} for the central limit theorem in free probability.

\end{abstract}

\begin{keyword}[class=MSC]
\kwd[Primary ]{60B12}
\kwd{60H07}
\kwd{28C20}
\end{keyword}

\begin{keyword}
\kwd{Wiener chaos}
\kwd{Malliavin calculus}
\kwd{Malliavin--Stein approach}
\kwd{Superconvergence}
\end{keyword}

\end{frontmatter}

\newpage
{\small
\tableofcontents
}
\newpage

\section{Introduction}

\subsection{Summary of the results}
Controlling the regularity of a sequence of asymptotically normal random variables is a prevalent question in probability theory.
In the framework of the usual central limit theorem, the smoothing effect of convolution entails the following regularization phenomenon.
Let $(X_{i})$ be a sequence of centred, normalized, and i.i.d.\ random variables such that $\Esp*{ \mathrm{e}^{\mathrm{i}tX_{1}} } \sim t^{-\theta}$ as $t \to \infty$ for some $\theta > 0$, then for all $q \in \mathbb{N}$, there exists $n$ large enough such that the law of $n^{-1/2} \sum_{i=1}^{n} X_{i}$ has a density with respect to the Lebesgue measure that is $\mathscr{C}^{q}$, and converges in the $\mathscr{C}^{q}$-topology to the Gaussian density;
$\mathscr{C}^{q}$ being the space of functions $f \colon \mathbb{R} \to \mathbb{R}$ with $f, f', \dots, f^{(q)}$ continuous and bounded, equipped with the topology induced by the norm $\norm{f}_{\mathscr{C}^{q}} \coloneq \norm{f}_{\infty} + \dots + \norm{f^{(q)}}_{\infty}$.

Extending normal convergence to non-linear functionals of a random field, in particular, polynomial functionals of a Gaussian field is a fertile and lively area of research.
We refer to \cite{NourdinPeccati} and the references therein, as well as to \cite{NourdinWebpage} for an overview.
Despite numerous results regarding normal approximation, capturing the above regularization phenomenon for Gaussian polynomials has so far remained out of reach: the best known modes of convergence are the total variation distance \cite{nourdin2009stein} or the relative entropy \cite{nourdin2014entropy}.
As opposed to the central limit theorem, thoroughly covered by \cite{LionsToscani}, due to the absence of convolution, questions regarding regularity in this non-linear framework are much more challenging.

In this article, we develop a novel approach to study the regularity of non-linear functionals of a Gaussian field, based on Malliavin calculus and Wiener chaoses theory.
In this setting, we show regularization of densities along normal convergence.
This discovery drastically strengthens the aforementioned results.
Before stating our results, we recall that the Wiener chaoses are the infinite-dimensional counterpart of the well-known Hermite polynomials.
In particular, they form an orthogonal basis with respect to the Wiener measure.
We also recall that non-constant random variables in a finite sum of Wiener chaoses always admit a density with respect to the Lebesgue measure \cite{shigekawa1980derivatives}.
We give more details on Wiener chaoses in \cref{s:wiener}.
We write $d_{\mathrm{FM}}$ for the \emph{Fortet--Mourier} distance; it metrizes the topology of convergence in law.
In the statement below, the Fortet--Mourier distance plays no specific role and could be replaced by any distance metrizing the topology of convergence in law.
  We also write $\mathbb{N}$ for the set of natural integers, and $\mathbb{N}^{*} \coloneq \mathbb{N} \setminus \{0\}$.
Our main result reads as follows.

\begin{theorem}\label{th:main-density}
  Let $d \in \mathbb{N}^{*}$ and $q \in \mathbb{N}$, there exist $\delta = \delta_{q,d} > 0$ and $C = C_{q,d} > 0$, such that for all $F$ in the Wiener chaos of degree $d$, with density $f$, we have:
  \begin{equation*}
    d_{\mathrm{FM}}(F, \mathcal{N}(0,1)) \leq \delta \Rightarrow \bracket*{ f \in \mathscr{C}^{q} \ \text{and} \ \norm{f}_{\mathscr{C}^{q}} \leq C }.
  \end{equation*}
\end{theorem}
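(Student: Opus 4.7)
The strategy is rooted in Malliavin's classical criterion: if $F \in \mathbb{D}^{\infty}$ and $\Esp*{\norm{DF}_{H}^{-p}}$ is finite for every $p \geq 1$, then the density $f$ is $\mathscr{C}^{\infty}$, and iterating the integration-by-parts formula $q$ times yields an explicit quantitative bound of the shape
\begin{equation*}
  \norm{f}_{\mathscr{C}^{q}} \leq Q_{q}\paren*{ \norm{F}_{k_{q},p_{q}}, \ \Esp*{\norm{DF}_{H}^{-p_{q}}} }
\end{equation*}
with $Q_{q}$ polynomial in its arguments. Because $F$ lies in the $d$-th chaos, hypercontractivity equates every Sobolev norm $\norm{F}_{k,p}$ with $\Esp*{F^{2}}^{1/2}$ up to a constant depending only on $d$, $k$, $p$; and convergence in law within a fixed chaos upgrades (through uniform integrability of high moments) to convergence of $L^{2}$-norms, forcing $\Esp*{F^{2}}$ to stay close to $1$ whenever $d_{\mathrm{FM}}(F,\mathcal{N}(0,1))\leq \delta$ with $\delta$ small. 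Consequently, \cref{th:main-density} reduces to the uniform bound
\begin{equation*}
  \sup \Esp*{\norm{DF}_{H}^{-p}} < \infty, \qquad \forall\, p \geq 1,
\end{equation*}
the supremum ranging over $F$ in the $d$-th chaos with $d_{\mathrm{FM}}(F,\mathcal{N}(0,1))\leq \delta$.

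\textbf{Breaking the Carbery--Wright ceiling.} The polynomial $\norm{DF}_{H}^{2}$ has total degree $2d-2$ on Wiener space, so the Carbery--Wright inequality applied crudely only gives $\Esp*{\norm{DF}_{H}^{-p}}<\infty$ for $p<1/(d-1)$ -- precisely the restrictive assumption previously imposed by hand in \cite{hu2014convergence,HNTXBreuerMAjor,nourdin2016fisher}. The novelty is to substitute $DF$ by a \emph{decoupled} Malliavin gradient $\widetilde{D}F$, obtained through a rotation of the underlying isonormal process against an independent copy. This gradient still implements integration by parts for $F$, but enjoys the representation
\begin{equation*}
  \norm*{\widetilde{D}F}_{H}^{2} = \psh*{A_{F}\, G}{G}
\end{equation*}
with $G$ an \emph{independent} standard Gaussian vector and $A_{F}$ a random symmetric operator built from the Malliavin Hessian $D^{2}F$. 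Conditionally on $A_{F}$, the right-hand side is a standard Gaussian quadratic form, so its negative moments are directly controlled by the spectrum of $A_{F}$.

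\textbf{Spectral transfer and main obstacle.} Everything then rests on transferring normal convergence of $F$ into spectral stability of $A_{F}$. For $F$ in a fixed chaos, Nualart--Peccati's fourth-moment phenomenon makes $d_{\mathrm{FM}}(F,\mathcal{N}(0,1))\to 0$ equivalent to $\Esp*{F^{4}}\to 3$, which in turn is equivalent to the vanishing of all non-trivial contractions of the defining kernel of $F$. A careful analysis of those contractions -- and the corresponding diagrammatic estimates -- should promote this $L^{2}$-stability of $D^{2}F$ into spectral stability of $A_{F}$ about a deterministic, non-degenerate configuration. This is the heart of the proof and the main technical obstacle: handling the multilinear combinatorics of the Hessian so as to preclude small eigenvalues of $A_{F}$ in the near-Gaussian regime. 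Once this spectral control is in hand, the conditional Gaussian analysis of the previous paragraph yields uniform negative-moment bounds of all orders; plugging these into the first step concludes the proof of \cref{th:main-density}.
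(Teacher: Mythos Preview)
Your first step (reduction to uniform negative moments of $\Gamma[F,F]=\norm{DF}^{2}$ via Malliavin's criterion and hypercontractivity) is correct and matches the paper exactly. Your decoupling idea is also in the right spirit: the paper's \emph{sharp operator} $\sharp[F]=\sum_i \partial_i F\cdot G_i$ and its iterate $\sharp^{2}[F]$ do play the role you describe, and after two iterations one indeed lands on a conditional quadratic form in independent Gaussians governed by the Hessian $\nabla^{2}F$, yielding the bound $\Esp*{\Gamma[F,F]^{-q}}\le C\,\Esp*{\mathcal{R}_{q'}(\nabla^{2}F)^{-1/2}}$ for the spectral remainders $\mathcal{R}_{q'}$.

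The genuine gap is your third step. You write that contraction/diagrammatic estimates ``should promote $L^{2}$-stability of $D^{2}F$ into spectral stability of $A_F$ about a deterministic, non-degenerate configuration'', and you flag this as the main obstacle without resolving it. Two problems. First, the intuition is off: the Hessian does \emph{not} stabilise near a fixed operator. Already on $\mathcal{W}_2$ the associated matrix has spectral radius tending to $0$ while its Hilbert--Schmidt norm stays bounded --- its spectrum spreads out rather than concentrating. What one needs is not convergence of $A_F$ but a lower bound on the spectral remainders $\mathcal{R}_q(\nabla^{2}F)$, which is a much weaker statement about how many eigenvalues stay away from zero. Second, and more importantly, contraction combinatorics do not seem to deliver this; the paper does not use them at all.

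The paper's actual mechanism is an \emph{induction on the chaos degree} $m$, and the engine of the induction is a compressing argument that you are missing entirely. One observes the algebraic identity $\mathcal{R}_q(\nabla^{2}F\cdot X)=\det\Gamma(\mathsf{D}_X F)$, where $\mathsf{D}_X F\in\mathcal{W}_{m-1}^{q}$ is the vector of directional derivatives along the columns of $X$. Choosing $X$ to be a Gaussian random matrix (independent of everything), one shows two things simultaneously on a set of $X$'s of positive measure: (i) $\mathsf{D}_X F$ is close in law to a standard Gaussian vector (this uses $\Gamma[F,F]\to m$ in $L^2$ and the quantitative fourth-moment bound), so the \emph{vectorial} induction hypothesis on $\mathcal{W}_{m-1}$ controls $\Esp*{\det\Gamma(\mathsf{D}_X F)^{-p}}$; and (ii) $\mathcal{R}_q(\nabla^{2}F\cdot X)^p/\mathcal{R}_q(\nabla^{2}F)^p$ has bounded expectation, by a Cauchy--Binet computation. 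Cauchy--Schwarz then closes the loop. This recursive reduction from $\mathcal{W}_m$ to $\mathcal{W}_{m-1}$ is the heart of the argument and is absent from your outline.
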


  Closely related to \cref{th:main-density}, is the following sequential theorem,
  that gives the announced regularization phenomenon along normal convergence on Wiener chaoses.
Write $\varphi$ for the standard Gaussian density.
\begin{theorem}\label{th:main-sequential}
  Let $(F_{n})$ be a sequence of random variables in a Wiener chaos of fixed degree, with respective density $(f_{n})$.
  Then,
  \begin{equation*}
    F_{n} \xrightarrow[n \to \infty]{law} \mathcal{N}(0,1) \Leftrightarrow \bracket*{\norm{f_{n}^{(q)} - \varphi^{(q)}}_{\infty} \xrightarrow[n \to \infty]{} 0, \qquad \forall q \in \mathbb{N} }.
  \end{equation*}
\end{theorem}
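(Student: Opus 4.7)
The reverse implication is immediate: uniform convergence of $f_{n}$ to $\varphi$ implies convergence in law. I focus on the forward implication. Fix $q \in \mathbb{N}$. The plan is to combine the uniform regularity estimate of \Cref{th:main-density} with a compactness argument to upgrade convergence in law into $\mathscr{C}^{q}$-convergence of the densities, and then to identify all accumulation points as derivatives of $\varphi$.

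\emph{Step 1 (uniform regularity from \Cref{th:main-density}).} Since $d_{\mathrm{FM}}$ metrizes weak convergence, $d_{\mathrm{FM}}(F_{n}, \mathcal{N}(0,1)) \to 0$. Applying \Cref{th:main-density} with parameter $q+1$ instead of $q$, we obtain, for $n$ large, $f_{n} \in \mathscr{C}^{q+1}$ with $\norm{f_{n}}_{\mathscr{C}^{q+1}} \leq C_{q+1,d}$. In particular, $(f_{n}^{(j)})$ is uniformly bounded and equicontinuous on $\mathbb{R}$ for every $j \leq q$.

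\emph{Step 2 (uniform tail decay).} Since $F_{n}$ lies in a chaos of fixed degree $d$ and converges in law to a non-degenerate Gaussian, hypercontractivity forces $\sup_{n} \esp \abs{F_{n}}^{p} < \infty$ for every $p$. A Lipschitz-versus-moment argument shows uniform polynomial decay of $f_{n}$: if $f_{n}(x_{0}) \geq \varepsilon$ with $\abs{x_{0}}$ large, then by the uniform bound on $\norm{f_{n}'}_{\infty}$, $f_{n} \geq \varepsilon/2$ on an interval of length $\asymp \varepsilon$ around $x_{0}$, contributing $\asymp \varepsilon^{2} \abs{x_{0}}^{2p}$ to the $2p$-th moment, hence $f_{n}(x) \leq C_{p}(1+\abs{x})^{-p}$ uniformly in $n$. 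The same trick, used inductively, yields: if $f_{n}^{(j-1)}$ vanishes at infinity uniformly in $n$ and $\norm{f_{n}^{(j+1)}}_{\infty}$ is uniformly bounded, then so does $f_{n}^{(j)}$ (a peak of height $\varepsilon$ of $f_{n}^{(j)}$ would force a jump of size $\asymp \varepsilon^{2}$ in $f_{n}^{(j-1)}$, which is forbidden at infinity). Iterating for $j = 1, \ldots, q$ gives uniform tail decay of $f_{n}^{(j)}$ for every $j \leq q$.

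\emph{Step 3 (compactness and identification of limits).} Equicontinuity together with uniform tail decay give the relative compactness of $(f_{n}^{(q)})$ in $C_{0}(\mathbb{R})$ endowed with the sup norm. For any subsequence, diagonal extraction produces a sub-subsequence $(n_{k})$ such that $f_{n_{k}}^{(j)} \to g_{j}$ uniformly on $\mathbb{R}$ for every $j \leq q$, with $g_{j}' = g_{j+1}$. Uniform tail control makes $g_{0}$ a probability density, and convergence in law of $F_{n_{k}}$ to $\mathcal{N}(0,1)$ combined with the uniform convergence forces $\int h \, g_{0} = \int h \, \varphi$ for every compactly supported continuous $h$; hence $g_{0} = \varphi$, whence by iterated differentiation $g_{q} = \varphi^{(q)}$. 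Since every subsequence admits a sub-subsequence converging to the same limit $\varphi^{(q)}$, the whole sequence converges: $\norm{f_{n}^{(q)} - \varphi^{(q)}}_{\infty} \to 0$.

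The true depth lies in \Cref{th:main-density}; the present theorem is a compactness-and-identification consequence of it. The main technical point of the reduction is the inductive tail-decay argument of Step~2, which transfers the (easy) moment decay of $f_{n}$ into pointwise decay of all derivatives through the uniform $\mathscr{C}^{q+1}$ bound.
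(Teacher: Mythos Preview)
Your argument is correct. The reverse implication is trivial, and for the forward implication your three-step compactness argument works: the uniform $\mathscr{C}^{q+1}$ bound from \Cref{th:main-density}, the inductive tail-decay transfer (which is valid --- a peak of $f_n^{(j)}$ forces, via the Lipschitz bound from $f_n^{(j+1)}$, a variation of $f_n^{(j-1)}$ that is excluded at infinity), and Arzel\`a--Ascoli in $C_0(\mathbb{R})$ with identification of the unique limit. One small point you leave implicit: the uniform moment bound $\sup_n \Esp{|F_n|^p} < \infty$ requires first knowing $\sup_n \Esp{F_n^2} < \infty$, which is not automatic from convergence in law alone; it follows here from a Paley--Zygmund argument on a fixed chaos (if $\|F_n\|_2 \to \infty$ the sequence could not be tight), after which hypercontractivity gives all moments.

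The paper proceeds differently. Rather than invoking \Cref{th:main-density} (a $\mathscr{C}^q$ bound) and then running a hands-on compactness argument, it goes back one step: Malliavin's integration-by-parts lemma together with \Cref{th:main-negative-moments} yields uniform boundedness of $(f_n)$ in $W^{q,1}(\mathbb{R})$ for every $q$; an interpolation argument (using, say, that $f_n \to \varphi$ in $L^1$ from total variation convergence, combined with the $W^{Q,1}$ bound for $Q>q$) upgrades this to convergence in $W^{q,1}$, and Sobolev embedding then gives the $\mathscr{C}^q$ convergence. Your route is more elementary and self-contained --- no Sobolev interpolation inequalities are needed --- at the cost of the explicit Step~2 tail analysis. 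The paper's route is more compact but relies on functional-analytic black boxes, and has the advantage of directly delivering the $W^{q,p}$ convergence stated in \Cref{th:main-density-vector} for all $p\in[1,\infty]$, not only $p=\infty$.
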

In the above theorem, the quantity $f_{n}^{(q)}$ is only defined for $n$ large enough. 

Our approach to regularity of laws on the Wiener space originates from Malliavin seminal contribution \cite{Malliavin}.
In this paper, Malliavin shows that a random variable $F$ on the Wiener space has a smooth law provided the norm of its \emph{Malliavin derivative} $\Gamma[F,F] \coloneq \norm{\mathsf{D} F}^{2}$ admits negative moments at every order.
Our \cref{th:main-density,th:main-sequential} proceed from Malliavin's strategy together with the following result, that is the pivotal tool of this paper.
\begin{theorem}\label{th:main-negative-moments}
  Let $(F_n)_{n\ge 1}$ be a sequence of random variables in a Wiener chaos of fixed degree.
  Assume that $(F_{n})$ converges in law to $\mathcal{N}(0,1)$, then
\begin{equation*}
  \limsup_{n \to \infty} \Esp*{\Gamma\bracket*{F_n,F_n}^{-q}} < \infty, \qquad q \in \mathbb{N}.
\end{equation*}
\end{theorem}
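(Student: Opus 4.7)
The plan is to turn the soft information $F_n \to \mathcal{N}(0,1)$ in law into a quantitative anti-concentration estimate for $\Gamma[F_n,F_n]$ near zero, using the fourth-moment theorem as a starting point and a decoupling procedure as the central new ingredient.

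First, hypercontractivity for elements of a fixed chaos ensures that convergence in law lifts to convergence of all moments, so in particular $\mathbf{E}[F_n^2] \to 1$. By the fourth-moment theorem of Nualart--Peccati, this is equivalent to $\mathrm{Var}(\Gamma[F_n,F_n]) \to 0$; combined with the identity $\mathbf{E}[\Gamma[F_n,F_n]] = d\,\mathbf{E}[F_n^2] \to d$, this yields $\Gamma[F_n,F_n] \to d$ in $L^2$. Since $\Gamma[F_n,F_n]$ lies in a finite sum of Wiener chaoses of degree at most $2d-2$, hypercontractivity propagates the convergence to every $L^p$, so all positive moments are uniformly controlled.

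The obstacle is that $L^p$-convergence to a positive constant does not preclude $\Gamma[F_n,F_n]$ from being very small on sets of small positive measure; to obtain all negative moments one needs a quantitative small-ball estimate of the form $\mathbf{P}(\Gamma[F_n,F_n] < \epsilon) \leq C_q\,\epsilon^{q+1}$ for every $q \in \mathbb{N}$, uniformly in $n$. A direct application of Carbery--Wright to $\Gamma[F_n,F_n]$, which is a polynomial of degree $2d-2$ in the underlying Gaussians, only yields decay of order $\epsilon^{1/(2d-2)}$ and is wholly insufficient. To break this barrier I would follow the decoupling strategy suggested in the introduction: pass to an enlargement of the probability space carrying an independent copy $\widetilde W$ of the isonormal process, and define the Malliavin gradient along directions picked out by $\widetilde W$ rather than $W$. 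This produces a representation of $\Gamma[F_n,F_n]$ as a low-degree polynomial in $\widetilde W$ whose coefficients are measurable with respect to $W$; conditioning on $W$, a one-dimensional Carbery--Wright-type estimate yields small-ball decay of arbitrary polynomial order, controlled by explicit spectral invariants of the Malliavin Hessian $\mathsf{D}^2 F_n$. Integrating these conditional bounds then closes the estimate.

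The main obstacle, in my view, is the uniform control of the relevant spectral quantities of $\mathsf{D}^2 F_n$: one must show that the non-degeneracy needed to make the conditional small-ball inequality integrable persists uniformly along the sequence, despite the constraint that all non-trivial contractions of the kernel of $F_n$ vanish (which is precisely the content of the fourth-moment condition). This reverse-type control is where the full structure of chaos convergence to a Gaussian must be exploited, and I expect it to be the technical heart of the argument.
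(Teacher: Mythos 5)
You correctly identify the setup — reduce to a quantitative small-ball estimate, decouple by differentiating in independent Gaussian directions, and relate the conditional small-ball probability to spectral invariants of the Malliavin Hessian $\mathsf{D}^{2}F_{n}$. This matches the first half of the paper's architecture (\cref{supersharpnegmoments}, \cref{Hessianbound}). However, there is a genuine and central gap, which you flag honestly but do not resolve: you have no mechanism for controlling the spectral remainders $\mathcal{R}_{q}(\mathsf{D}^{2}F_{n})$, and that is not a technical footnote but the entire content of the theorem. The paper closes this loop with two ideas that your plan is missing. First, the whole proof is an \emph{induction on the chaos degree} $m$: directional derivatives $\mathsf{D}_{X}F_{n}$ drop the degree to $m-1$, where the result is assumed; nothing in your proposal exploits this degree reduction, and without it the spectral control has no foothold. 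Second, the paper uses a \emph{compression argument}: for a Gaussian random $K\times q$ matrix $X$, one simultaneously has, with probability $>\tfrac{2}{3}$ each, that $\mathcal{R}_{q}(\nabla^{2}F_{n}X)$ dominates $\mathcal{R}_{q}(\nabla^{2}F_{n})$ in the relevant $L^{p}$ sense (\cref{choice2}, via Cauchy--Binet and hypercontractivity), and that $\mathcal{R}_{q}(\nabla^{2}F_{n}X) = \det\Gamma(\mathsf{D}_{X}F_{n})$ with $\mathsf{D}_{X}F_{n}\in\mathcal{W}_{m-1}^{q}$ close in law to a Gaussian vector (\cref{almostCLT}, \cref{th:rq-det-Malliavin}); intersecting the two sets and applying the (discretized, vectorial) induction hypothesis then finishes the step via Cauchy--Schwarz.

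A secondary, but real, issue is your invocation of a \emph{Carbery--Wright-type estimate} for the conditional small-ball. Carbery--Wright applied to a degree-$k$ polynomial gives only $\epsilon^{1/k}$ decay, which you yourself note is insufficient; passing to conditional distributions does not change that, since the conditional law of $\sharp^{2}[F_{n}]$ is again a degree-$2$ Gaussian polynomial. What actually produces arbitrary polynomial decay in the paper is the explicit computation of the $\chi^{2}$ Laplace transform in the $\mathcal{W}_{2}$ base case (\cref{prop:estiRq}), which yields $\Esp[\Gamma[F,F]^{-q}]\le C\,\mathcal{R}_{2q+1}(A)^{-1/2}$ — a bound whose order in $q$ is unbounded precisely because the spectral remainder $\mathcal{R}_{2q+1}$ vanishes to high order as eigenvalues concentrate, not because of any Carbery--Wright-style universality. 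Without both the base-case computation and the induction-plus-compression mechanism, your sketch cannot close.
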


  By standard Malliavin calculus techniques, which are recalled in \cref{s:malliavin-lemma}, \cref{th:main-density} \& \cref{th:main-sequential}, follows from \cref{th:main-negative-moments}.
  Establishing \cref{th:main-negative-moments} is the main contribution of this paper.
  The proof of \cref{th:main-negative-moments} is conducted in \cref{s:proofs}, and relies on the following key ideas.
  \begin{itemize}
    \item \cref{prop:dimupgrade} shows that \cref{th:main-negative-moments}, stated for scalar random variables, is actually equivalent to its version for vector-valued random variables .
    \item Thanks to a new representation of Malliavin derivatives, we relate, in \cref{s:negative-moments-estimates}, the negative moments of $\Gamma(F,F)$ to spectral quantities associated to the Malliavin Hessian.
    \item For chaos of degree $2$, the Malliavin Hessian is deterministic and the control of these spectral quantities is straightforward (see \cref{s:proof-second-chaos}).
    \item For chaos of degree $\geq 2$, we proceed by induction.
      We first compare the Malliavin Hessian with the \emph{compressed Malliavin Hessian} which is obtained by multiplying it by a large independent Gaussian matrix of small rank, which enables us to reduce the dimension (see \cref{s:compressing}).
      Then, \cref{s:induction} we interpret the compressed Hessian as a vector of random variables in a chaos of degree $d-1$ allowing to conclude by the induction hypothesis.
\end{itemize}
We give a more detailed summary of our approach in \cref{s:scheme}.

\subsection{Compendium of related results}

\subsubsection{Central limit on the Wiener space and the Fourth Moment Theorem}
The breakthrough by Nualart \& Peccati \cite{nualart2005central} provides an efficient and tractable criterion to establish normal convergence on Wiener chaoses.
Their \emph{Fourth Moment Theorem} states that a sequence in a Wiener chaos of fixed degree converges in law to a Gaussian if and only if the sequences of its second and fourth moments converge to the respective moments of the target Gaussian distribution.
This result has stemmed a new line of research establishing simple, yet powerful, conditions for normal convergence on the Wiener space.
Among the most notable developments regarding limit theorems on Wiener chaoses, let us mention the following non-exhaustive contributions.

\begin{itemize}[wide]
  \item[\cite{peccati2005gaussian}] Peccati \& Tudor extends the fourth moment theorem to random vectors whose each coordinate lives in a Wiener chaos, possibly of different degrees.
  \item[\cite{nualart2008central}] Ortiz-Latorre \& Nualart establishes that a sequence of random variables ${(F_{n})}$ in a fixed Wiener chaos converges in law to a Gaussian if and only if $\Gamma[F_{n}, F_{n}]$ converges to a constant in $L^{2}$.
  \item[\cite{nourdin2009stein}] Nourdin \& Peccati combines Stein's method and Malliavin calculus in order to obtain a quantitative fourth moment theorem.
    Namely, for a chaotic random variable $F$ with $\Esp*{F^{2}} = 1$, we have
    \begin{equation*}
      d_{\mathrm{TV}}(F, \mathcal{N}(0,1)) \leq c \Var*{\Gamma[F,F]}^{1/2} \leq c \Esp*{F^{4} - 3}^{1/2}.
    \end{equation*}
    This landmark contribution emphasizes the symbiotic interplay between Stein's method and Malliavin calculus: on the Wiener space, Stein kernels, that quantify convergence in distribution, are explicitly computable through integration by parts for the Malliavin operators.
    See \cite{NourdinWebpage} for a regularly updated list of contributions in this area.
  \item[\cite{ledoux2012chaos,azmoodeh2014fourth}] Ledoux, and Azmoodeh, Campese \& Poly leverage the rich spectral properties of Wiener chaoses to revisit the fourth moment theorem.
    This approach avoids the intricate product formula for Wiener chaoses, and insists instead on moment inequalities for chaotic random variables.
    For further developments of this strategy see \cite{azmoodeh2016generalization,malicet2016squared}
  \item[\cite{nourdin2016fisher}] Nourdin, Peccati \& Swan improve further the Malliavin-Stein approach by establishing a fourth moment bound for the relative entropy with respect to the Gaussian measure.
    In view of Pinsker's inequality, this improves the convergence in total variation of \cite{nourdin2009stein}, although the rate of convergence in \cite{nourdin2016fisher} are non-sharp by an additional logarithmic factor.
\end{itemize}
All the results presented above holds for a \emph{general sequence of chaotic random variables}, that is they hold without any further assumption on the sequence.
For such general sequences, until the present contribution, no results beyond convergence in entropy were available.

\subsubsection{Controlling the regularity via the negative moments of the Malliavin gradient}

Originally, Malliavin \cite{Malliavin} uses controls on negative moments of the Malliavin derivative to give a new, purely probabilistic, proof of Hörmander theorem on hypo-elliptic operators \cite{HormanderHypoelliptic}, see also the recent self-contained survey \cite{hairer2011malliavin}.
Since then, establishing that $\Gamma[F,F]^{-p} \in L^{1}$ has become a practical criterion in the study of the regularity of the density of $F$.
For instance, in various contexts, the recent works \cite{CassFriz,HNTXBreuerMAjor,nourdin2016fisher,AruGMC} implement this strategy.
In this paper, as well as in the companion paper \cite{HMPreg}, we propose a new general estimate on the negative moments of $\Gamma[F,F]$, involving the spectrum of the Hessian matrix of $F$.
We then bring the aforementioned fine results regarding normal convergence on Wiener chaoses, arising from the Malliavin--Stein method, to bear on establishing existence of negative moments for asymptotically normal chaotic sequences.

Previous works on the Wiener space have implemented the strategy of controlling negative moments of the Malliavin derivative to improve normal convergence.
These various attempts fail to capture the generality of the phenomenon we exhibit in this work, and are constrained by unnecessary assumptions in order to carry their analysis.
Let us mention the most prominent developments in that regard.
In the three following examples, the present contribution renders the additional assumptions on the negative moments of the Malliavin derivative unnecessary.
\begin{itemize}[wide]
  \item[\cite{hu2014convergence}] Assuming negative moments for the Malliavin derivative, Hu, Lu \& Nualart give a $\mathscr{C}^{\infty}$ version of the celebrated bound of \cite{nourdin2009stein}.
    Namely, take a sequence $(F_{n})$ of chaotic random variables with variance $1$ and such that
    \begin{equation*}
      \limsup_{n \to \infty} \Esp*{ \Gamma[F_{n}, F_{n}]^{-p} } < \infty, \qquad p \in \mathbb{N},
    \end{equation*}
    they show the following Malliavin--Stein bound for \emph{superconvergence}
    \begin{equation*}
      \norm{f_{n}^{(q)} - \varphi^{(q)}}_{\infty} \leq c_{q} \Esp*{F_{n}^{4} - 3}^{1/2}, \qquad q \in \mathbb{N}.
    \end{equation*}
  \item[\cite{nourdin2016fisher}] In the same spirit, under the assumptions of negative moments, Nourdin \& Nualart establish a fourth moment theorem in relative Fisher information.
     The authors are, moreover, able to apply their criterion to general sequences of random variables living in the \emph{second} Wiener chaos.
    In this case, an explicit diagonalization argument allows to conclude on the existence of negative moments.
    We also refer to \cite[Prop.\ 5.5]{ledoux2015stein} for related bounds on the negative moments of the Malliavin derivatives in connection with the Fisher information.
  \item[\cite{HNTXBreuerMAjor}] Hu, Nualart, Tindel \& Xu establish \emph{superconvergence} to the normal distribution of properly rescaled Hermite sums of a stationary Gaussian field, under the assumption that the spectral measure admits a density whose logarithm is integrable, together with mild additional assumptions on the spectral measure.
\end{itemize}

\subsubsection{Regularity for general polynomials in Gaussian variables}
Our techniques strongly profit from the asymptotic normality of the sequence under consideration.
The question of the regularity of the law for a generic element of a Wiener chaos, possibly away from normality, has attracted several important contributions.
For instance, \cite{BogachevKosovZelenov} establishes that the law of a non-constant polynomials in independent Gaussian variables always belong to a fractional Nilkolskii--Besov space.
Moreover, they show that this regularity is the best possible at this level of generality.
See also the survey \cite{BogachevSurvey}, and the references therein.

\subsubsection{\emph{Superconvergence} in free probability}
In \cite{BercoviciVoiculescu}, Bercovici \& Voiculescu discover a remarkable regularization in the free central limit theorem: indeed for \emph{any} free and identically distributed random variables $(X_{n})$ the law of $n^{-1/2} \sum_{i=1}^{n} X_{i}$ is eventually smooth, and the sequence of respective densities converges to the semi-circular density, in the sense of uniform convergence on compact sets of all the derivatives.
They call this better-than-expected convergence \enquote{\emph{superconvergence}} and we borrow the terminology from their work.

\subsection{Detailed review of the results}

As anticipated, we establish a regularization phenomenon along normal convergence on Wiener chaoses.
Our techniques exploit the rich structure of Wiener chaoses, and yield existence of negative moments for $\norm{\mathsf{D} F_{n}}$, as soon as $(F_{n})$ converges in law to a non-degenerate Gaussian.
This phenomenon has gone unnoticed until now.
It allows, in particular, an important enhancement of the normal convergence on Wiener chaoses: from total variation \cite{nourdin2009stein} or relative entropy \cite{nourdin2014entropy} to \emph{superconvergence}, that is uniform convergence of the densities as well as all their derivatives.
From \cite{nualart2008central,nourdin2009stein}, normal convergence of $(F_{n})$ guarantees that $(\norm{\mathsf{D} F_{n}})$ converges to a constant in $L^p$ ($p \ge 1$).
Here, we refine this information on the behaviour of the Malliavin derivatives and provide analogous results for negative $p$.
This enables Malliavin calculus techniques to establish regularization.
We actually obtain a version of our result for sequences of vectors whose coordinates are in Wiener chaoses, possibly of different degrees, and some variations of the result which hold for finite sums of Wiener chaoses.
We discuss below various applications.

\subsubsection{Regularization on Wiener chaoses}

In what follows, we denote by $\mathcal{W}_m$ the $m$-th Wiener chaos associated to a fixed Gaussian field, and $\Gamma$ the associated \emph{square field} operator, that is $\Gamma[F,F] \coloneq \norm{\mathsf{D} F}^{2}$ where $\mathsf{D}$ is the \emph{Malliavin derivative}.
Whenever, $\vec{F} = (F_{1}, \dots, F_{d})$ is vector-valued, we consider the \emph{Malliavin matrix}
\begin{equation*}
  \Gamma(\vec{F}) \coloneq \paren*{\Gamma[F_{i}, F_{j}]}_{ij}.
\end{equation*}
Following Malliavin's idea, our regularization results are obtained through the existence of negative moments for the Malliavin matrix.
The general version of our theorem for random vectors is as follows.
\begin{theorem}\label{th:main-negative-moments-vector}
  Let $d$ and $m_{1}, \dots, m_{d} \in \mathbb{N}^{*}$.
  Consider a sequence $(\vec{F}_n) \subset \prod_{i} \mathcal{W}_{m_{i}}$ such that
  \begin{equation*}
    \vec{F}_n\xrightarrow[n\to\infty]{\text{Law}}~\mathcal{N}(0,I_d).
  \end{equation*}
  Then, for every $q \ge 1$ there exist $N \in \mathbb{N}$ and $c > 0$ such that
  \begin{equation}\label{eq:seq:thm}
    \Esp*{\det\Gamma (\vec{F}_n)^{-q} }\le c, \qquad n \geq N.
  \end{equation}
\end{theorem}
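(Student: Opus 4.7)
The plan is a spectral reduction to the scalar result \cref{th:main-negative-moments}, followed by a compactness plus $\epsilon$-net argument to handle the resulting supremum over the unit sphere. I first reduce to the smallest eigenvalue $\lambda_{\min}(n)$ of the random symmetric positive matrix $\Gamma(\vec{F}_n)$: since $\det \Gamma(\vec{F}_n) \geq \lambda_{\min}(n)^{d}$, it suffices to show that $\limsup_{n} \Esp*{\lambda_{\min}(n)^{-p}} < \infty$ for every integer $p$. By the Rayleigh characterization and bilinearity of $\Gamma$,
\begin{equation*}
\lambda_{\min}(n) = \inf_{\xi \in S^{d-1}} \xi^{\top} \Gamma(\vec{F}_n) \xi = \inf_{\xi \in S^{d-1}} \Gamma\bracket*{G_n^{\xi}, G_n^{\xi}}, \qquad G_n^{\xi} \coloneq \psh*{\xi}{\vec{F}_n}.
\end{equation*}
For any fixed $\xi \in S^{d-1}$, the variable $G_n^{\xi}$ lies in the finite sum of chaoses $\bigoplus_{i} \mathcal{W}_{m_{i}}$, and the joint convergence $\vec{F}_n \to \mathcal{N}(0, I_d)$ in law forces $G_n^{\xi} \to \mathcal{N}(0, 1)$ in law. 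Invoking the variant of \cref{th:main-negative-moments} valid for finite sums of chaoses (announced in the excerpt) yields pointwise in $\xi$ that $\limsup_{n} \Esp*{\Gamma\bracket*{G_n^{\xi}, G_n^{\xi}}^{-p}} < \infty$.

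I would next promote this to a bound uniform in $\xi$ by a standard compactness argument. Were it to fail, there would exist $n_k \to \infty$ and $\xi_{n_k} \in S^{d-1}$ along which the negative moment explodes; after extracting a subsequence with $\xi_{n_k} \to \xi_{\ast} \in S^{d-1}$, the bound
\begin{equation*}
\norm*{G_{n_k}^{\xi_{n_k}} - G_{n_k}^{\xi_{\ast}}}_{L^{2}} \leq \abs*{\xi_{n_k} - \xi_{\ast}} \norm*{\vec{F}_{n_k}}_{L^{2}} \xrightarrow[k \to \infty]{} 0,
\end{equation*}
combined with $G_{n_k}^{\xi_{\ast}} \to \mathcal{N}(0, 1)$ in law, gives $G_{n_k}^{\xi_{n_k}} \to \mathcal{N}(0, 1)$ in law. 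The scalar theorem applied to this new sequence forbids the blow-up, a contradiction. Hence $\sup_{\xi \in S^{d-1}} \Esp*{\Gamma\bracket*{G_n^{\xi}, G_n^{\xi}}^{-p}} \leq C_p$ for $n$ large.

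To pass from this pointwise uniform bound to a bound on the infimum, I would introduce an $\epsilon$-net $N_\epsilon \subset S^{d-1}$ of cardinality $\lesssim \epsilon^{-(d-1)}$. The quadratic character of $\xi \mapsto \xi^{\top} \Gamma(\vec{F}_n) \xi$ gives the deterministic inequality
\begin{equation*}
\lambda_{\min}(n) \geq \inf_{\xi' \in N_\epsilon} \Gamma\bracket*{G_n^{\xi'}, G_n^{\xi'}} - 2 \epsilon \norm*{\Gamma(\vec{F}_n)}_{\mathrm{op}}.
\end{equation*}
A union bound over $N_\epsilon$ combined with Markov's inequality applied to the previous uniform moment bound yields $\Prob*{\Gamma\bracket*{G_n^{\xi'}, G_n^{\xi'}} < t} \leq C_p t^p$. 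Together with hypercontractive moment bounds on $\norm*{\Gamma(\vec{F}_n)}_{\mathrm{op}}$ (at our disposal since the entries of $\Gamma(\vec{F}_n)$ lie in finite chaoses and converge in $L^{2}$ to $\delta_{ij}$), tuning $\epsilon$ as an appropriate power of $t$ gives $\Prob*{\lambda_{\min}(n) < t} \leq C_{p'} t^{p'}$ for every $p'$. Integrating recovers finite negative moments of $\lambda_{\min}(n)$ of all orders, and hence of $\det \Gamma(\vec{F}_n)$.

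The main obstacle is the uniformity step. \cref{th:main-negative-moments}, as stated, is asymptotic and concerns a single sequence in a fixed Wiener chaos, while the reduction above forces us to handle (i) linear combinations that generically sit in sums of chaoses and (ii) a whole compact family of sequences parameterized by $\xi$. Part (ii) is dealt with by the compactness argument, but only once the extension of the scalar result to finite sums of chaoses — and its stability under the $L^{2}$-perturbation of the sequence invoked in the contradiction — is in hand.
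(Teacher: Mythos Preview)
Your overall architecture --- reduce $\det\Gamma(\vec F_n)$ to the smallest eigenvalue, rewrite it as an infimum over the sphere of $\Gamma[G_n^\xi,G_n^\xi]$, get a uniform bound over $\xi$, then pass from $\sup_\xi \Esp{\cdots}$ to $\Esp{\inf_\xi \cdots}$ via an $\epsilon$-net --- is exactly the skeleton the paper uses (their \cref{th:negative-moments-sphere} is the uniform sphere bound, and \cref{prop:dimupgrade} is precisely your $\epsilon$-net step, carried out with the same Lipschitz/Markov/hypercontractivity ingredients). So structurally you and the paper agree.

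The genuine gap is in your compactness step for the uniform sphere bound when the $m_i$ are not all equal. You extract $\xi_{n_k}\to\xi_*$ and observe $G_{n_k}^{\xi_{n_k}}\to\mathcal N(0,1)$ in law, then invoke ``the scalar theorem'' to forbid blow-up. But $G_{n_k}^{\xi_{n_k}}$ lives in the \emph{sum} $\mathcal W_{\le m}$ with $m=\max_i m_i$, and the sum-of-chaoses result (\cref{th:main-negative-moments-sum-chaos}) does \emph{not} say ``$F_n\to\mathcal N(0,1)$ implies bounded negative moments of $\Gamma$''; it requires the projection $\mathsf J_m F_n$ onto the \emph{top} chaos to converge to a non-degenerate Gaussian. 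If $\xi_*$ has all its coordinates with $m_i=m$ equal to zero, then $\mathsf J_m G_{n_k}^{\xi_{n_k}}\to 0$ in $L^2$, and no available theorem applies to the sequence $G_{n_k}^{\xi_{n_k}}$. The paper's own counterexample $F_n=n^{-1}G^2+G$ (which converges in law to $\mathcal N(0,1)$ yet has $\Gamma[F_n,F_n]^{-1}$ never integrable) shows this is not a technicality: a small but nonzero top-chaos component can destroy negative moments. Your acknowledged caveat about ``stability under $L^2$-perturbation'' does not cover this, because the obstruction is not a perturbation issue but a failure of the hypothesis of the black box you want to cite.

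The paper closes this hole by proving a \emph{quantitative} scalar bound (\cref{sumofchaosquanti}): if $d_{\mathrm{FM}}(\mathsf J_m F,\mathcal N(0,1))\le\delta$ then $\Esp{\Gamma[F,F]^{-q}}\le C\|F\|_{L^2}^{\,r}$. The explicit power of $\|F\|_{L^2}$ lets them rescale so that the top-chaos part is normalized, at the price of a controlled blow-up in the bound. They then run an induction on $d$ (\cref{th:negative-moments-sphere}), splitting according to whether the last coordinate $|a_d|$ is large or small relative to a power of $\epsilon$; in the ``small'' branch they peel off that coordinate and invoke the inductive hypothesis in dimension $d-1$. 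Your compactness shortcut is valid in the equal-degree case $m_1=\dots=m_d$ (there $G_n^\xi\in\mathcal W_m$ and \cref{th:main-negative-moments} applies directly, recovering the paper's \cref{cor:dimupgrade}), but for mixed degrees you need this extra quantitative ingredient and the induction on $d$.
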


\begin{remark}
Equivalently the sequence $(\Gamma(\vec{F_n})^{-1})_{n\geq 1}$ is asymptotically bounded in $L^q$, in the sense that, for any matrix norm $\norm{\cdot}$,
\begin{equation*}
  \limsup_{n \to \infty} \Esp*{\norm{\Gamma (\vec{F}_n)^{-1}}^q} < \infty,
\end{equation*}
\end{remark}

  \begin{remark}
    We can consider more general limiting laws $\mathcal{N}(0,C)$ with $C$ an invertible $d \times d$ matrix.
    Without loss of generality, assume that $(m_{i})$ is not decreasing.
    Let $k \leq n$, and $i_{1}, \dots, i_{k}$ be the indices such that $m_{i_{l}} \ne m_{i_{l} +l}$.
    Then, since elements of a Wiener chaos of different degrees are uncorrelated \cite[Prop.\ 2.7.5]{NourdinPeccati}, we find that $C_{n}$, the covariance matrix of $\vec{F_{n}}$ is diagonal by blocks and contains exactly $k$ blocks..
    Write $C_{n}[1], \dots, C_{n}[k]$ for those blocks.
    The block $C_{n}[l]$ is a square matrix of size $n_{l} \times n_{l}$, where $n_{l}$ is the numbers of indices $i$ such that $m_{i} = m_{i_{l}}$.
    Since $C$ is invertible, for $n$ large enough $C_{n}$ is also invertible.
    Inverting the matrix by block, and using that Wiener chaos are stable by non-zero linear combinations, we obtain that $C_{n}^{-1/2} \vec{F}_{n}$ satisfies the assumptions on the theorem.
  \end{remark}

  \begin{remark}
    It is possible to consider a slight generalization of our result, where we consider degrees $m_{i}^{(n)}$ that depend on $n$ but are uniformly bounded.
    In this case the sequence $(m_{i}^{(n)})$ assume only finitely many distinct values.
    Thus, we can extract finitely many sub-sequences, that satisfy the assumptions of the theorem.
    It would be interesting, and more difficult, to consider varying degrees $m_{i}^{(n)}$ with $m_{i}^{(n)} \to \infty$, possibly with some prescribed speed of divergence.
    We do not how to tackle this problem; it would require the demanding and involved task of tracking quantitatively the dependence in the degrees of the chaos in all our estimates.
  \end{remark}

As anticipated, an important consequence of \cref{th:main-negative-moments-vector} is a \emph{superconvergence} phenomenon on Wiener chaoses: by integration by parts, negative moments for $\det \Gamma(\vec{F})$ yield regularity estimates on the density.
  We measure the regularity in the Sobolev space $W^{q,p}(\mathbb{R}^{d})$ with $p \in [1,\infty]$, and $q \in \mathbb{N}$.
  This is the space of (class of equivalence of) functions $f \colon \mathbb{R}^{d} \to \mathbb{R}$ that are in $L^{p}$ such that
  \begin{equation*}
    \partial^{\alpha} f \coloneq \frac{\partial^{q} f}{\partial x_{1}^{\alpha_{1}} \dots \partial x_{d}^{\alpha_{d}}} \in L^{p}, \qquad \alpha = (\alpha_{1}, \dots, \alpha_{d}) \in \mathbb{N}^{d},\, \abs{\alpha} \coloneq \alpha_{1} + \dots + \alpha_{d} = q.
  \end{equation*}
  Here the partial derivatives are taken in the sense of distributions.
  We often omit the dependence on $\mathbb{R}^{d}$ in our notation.
  We equip $W^{p,q}$ with the Sobolev norm
  \begin{equation*}
    \norm{f}_{W^{q,p}} \coloneq \sum_{\abs{\alpha} \leq q} \norm{\partial^{\alpha} f}_{L^{p}}.
  \end{equation*}
  For $q =0$, our definition is understood as $W^{0,p} = L^{p}$.
  Accordingly, we extend our definition of $\mathscr{C}^{q}$ as the set of functions $f \colon \mathbb{R}^{d} \to \mathbb{R}$ with all the partial derivatives $\partial^{\alpha}f$ for $\abs{\alpha} \leq q$ continuous and bounded.
  It is equipped with the norm
  \begin{equation*}
    \norm{f}_{\mathscr{C}^{q}} \coloneq \sum_{\abs{\alpha} \leq q} \norm{\partial^{\alpha}}_{\infty}.
  \end{equation*}

  We recall that by the Sobolev embeddings \cite[\S 4.5]{HormanderAnaFunc}, 
  \begin{align}
    & \label{eq:sobolev-embeddings} \norm{\cdot}_{W^{q,1}} \leq \norm{\cdot}_{W^{q,p}} \leq c \norm{\cdot}_{W^{q',1}}, \qquad p \in [1,\infty],\, q \in \mathbb{N},\ q' \coloneq d + q - \frac{d}{p} \geq q.
  \\& \label{eq:sobolev-embeddings-cq} \norm{\cdot}_{\mathscr{C}^{q}} \leq c \norm{\cdot}_{W^{q',1}}, \qquad q \in \mathbb{N}, \quad q' \coloneq q + d.
\end{align}
Thus, it is sufficient to control the norms $\norm{\cdot}_{W^{q,1}}$ for $q \in \mathbb{N}$, in order to control all the norms $\norm{\cdot}_{W^{q,p}}$ for $q \in \mathbb{N}$ and $p \in [1,\infty]$, and all the norm $\norm{\cdot}_{\mathscr{C}^{q}}$ for $q \in \mathbb{N}$.
To achieve the this control, it is convenient to work in Fourier modes.
We define
\begin{equation*}
  \norm{f}_{N^{q}} \coloneq \sup_{\norm{\vec{t}} \geq 1} \norm{\vec{t}}^{q} \abs{\hat{f}(\vec{t})}, \qquad q \in \mathbb{N}.
\end{equation*}
By the Hausdorff--Young inequality \cite[Thm.\ 7.1.13]{HormanderAnaFunc}, the Fourier isomorphism theorem \cite[Thm.\ 7.1.11]{HormanderAnaFunc}, the continuity of the Fourier transform on $L^{p}$-spaces \cite[Thm.\ 7.9.3]{HormanderAnaFunc}, and \cref{eq:sobolev-embeddings}, we have
\begin{equation}\label{eq:sobolev-embeddings-fourier}
  \norm{\cdot}_{W^{q,1}} \leq c \norm{\cdot}_{N^{q+1}} \leq c' \norm{\cdot}_{W^{q+1,1}}.
\end{equation}

\begin{theorem}\label{th:main-density-vector}
  Let $(\vec{F}_{n})$ be as in \cref{th:main-negative-moments-vector}.
  Then, denoting by $f_n$ the density of $\vec{F}_n$, and by $\varphi$ the density of the standard Gaussian distribution on $\mathbb{R}^{d}$, for every $p \in [1,\infty]$ and $q \geq 0$, we have that for $n$ large enough, $f_n\in W^{q,p}(\mathbb{R}^{d})$ and
  \begin{equation}\label{eq:main:convergence-sobolev}
  f_n \xrightarrow[n\to\infty]{W^{q,p}(\mathbb{R}^{d})} \varphi.
\end{equation}
\end{theorem}

Let us state some immediate consequences and remarks.

\begin{enumerate}[(a),wide]
\item
  In particular, we obtain that $f_n\to \varphi$ in $L^p(\mathbb{R}^{d})$ for every $p \in [1,\infty]$.
  This convergence is already new, except for $p =1$ which is simply total variation.
  By \cref{eq:sobolev-embeddings-cq}, we also deduce that for $n$ large enough, $f_{n} \in \mathscr{C}^{q}(\mathbb{R}^{d})$ and $\norm{f_n\to \varphi}_{\mathscr{C}^{q}}$. 

\item 
  By \cref{eq:sobolev-embeddings-fourier}, \cref{th:main-density-vector} also gives estimates on characteristic functions of vectors in Wiener chaoses close, in law, to a Gaussian vector, which may be advantageous in some circumstances: for every $m \in \mathbb{N}^{*}$ and $q \in \mathbb{N}$ there exist $\delta=\delta_{q,m} > 0 $ and $C=C_{q,m} > 0$ such that  for every $\vec{F} \in\mathcal{W}_m$ we have
\begin{equation}\label{eq:regularity:nonseq:fourier}
  d_{\mathrm{FM}}(\vec{F},\mathcal{N}(0,I_{d}))\le \delta\Rightarrow   \sup_{\norm{\vec{t}} \ge 1} \norm{\vec{t}}^q \ \abs*{\Esp*{\mathrm{e}^{\mathrm{i} \vec{t}\cdot \vec{F}}}}\le C.
\end{equation}

\item
We could in fact be more precise and estimate the rate of convergence.
For example, \cref{th:main-negative-moments-vector} together with \cite[Theorem 4.4]{hu2014convergence} yields, for univariate random variables, that for every $p \in \mathbb{N}$ there exist $C_p$ and $\alpha_p > 0$ such that for every $F \in \mathcal{W}_{m}$ with density $f$
\begin{equation}\label{cor:nualart}
  \Esp*{F^4}-3<\alpha_p \Rightarrow \bracket*{ f \in \mathscr{C}^p\ \text{and}\ \sup_{x\in\mathbb{R}} \norm{f - \varphi}_{\mathscr{C}^{p}} \le C_p \Esp*{F^4-3}^{1/2} }
\end{equation}
This estimate could be generalized to random vectors.
It would be interesting and useful to derive an explicit expression for the quantities $\alpha_p$ and $C_p$.
This rather demanding task falls beyond the scope of this article, and could be explored in further contributions.
\end{enumerate}

  Let us now show how, modulo a well-known result in Malliavin calculus, recalled in \cref{s:malliavin-lemma}, \cref{th:main-negative-moments-vector} implies \cref{th:main-density-vector}.
  Similarly, \cref{th:main-negative-moments} implies \cref{th:main-density,th:main-sequential}.
  \begin{proof}[Proof of \cref{th:main-density,th:main-sequential,th:main-density-vector}]
    By \cref{eq:sobolev-embeddings,eq:sobolev-embeddings-cq,eq:sobolev-embeddings-fourier}, it is sufficient to show that
    \begin{equation*}
      \norm{f_{n} - \varphi}_{N^{q}} \to 0, \quad q \in \mathbb{N}.
    \end{equation*}
    The conclusion of \cref{th:main-negative-moments-vector}, that is \cref{eq:seq:thm}, implies that
    \begin{equation}\label{eq:main:bound-sobolev}
      \limsup_{n \to \infty} \norm{f_{n}}_{W^{q,p}} < \infty, \qquad p \in [1,\infty],\, q \in \mathbb{N}.
    \end{equation}
    For details on this fairly standard result, see \cref{th:malliavin-lemma} below.
    By \cref{eq:sobolev-embeddings-fourier}, \cref{eq:main:bound-sobolev} yields
    \begin{equation}\label{eq:bound-fourier}
      \limsup_{n \to \infty} \norm{f_{n}}_{N^{q}} < \infty, \qquad q \in \mathbb{N}.
    \end{equation}
    Fix $\varepsilon > 0$, and let $A = \varepsilon^{-1}$.
    By the convergence in law, we find that
    \begin{equation*}
      \sup_{1 \leq \norm{\vec{t}} \leq A} \norm{\vec{t}} \norm{\hat{f}_{n}(t) - \hat{\varphi}(t)} \to 0.
    \end{equation*}
    On the other hand, we have
    \begin{equation*}
      \sup_{\norm{\vec{t}} \geq A} \norm{\vec{t}}^{q} \norm{\hat{f}_{n}(t) - \hat{f}(t)} \leq \varepsilon^{q} \norm{f_{n} - \varphi}_{N^{2q}}.
    \end{equation*}
    Since by \cref{eq:bound-fourier}, and smoothness of $\varphi$,
    \begin{equation*}
      c \coloneq \limsup_{n \to \infty} \norm{f_{n} - \varphi}_{N^{2q}} < \infty,
    \end{equation*}
     we find that
     \begin{equation*}
       \limsup_{n \to \infty} \sup_{\norm{\vec{t}} \geq A} \norm{\vec{t}}^{q} \norm{\hat{f}_{n}(t) - \hat{f}(t)} \leq c \varepsilon^{q}.
     \end{equation*}
  This concludes the proof.  
  \end{proof}

\subsubsection{Regularization on sum of chaoses}
The results partially extend to random variables in a finite sum of Wiener chaoses.
We let $\mathcal{W}_{\leq m} \coloneq \bigoplus_{k=0}^m \mathcal{W}_k$. We also denote by $\mathsf{J}_k$ the projection on the $k$-th Wiener chaos.

\cref{th:main-density} does not hold on $\mathcal{W}_{\leq m}$.
Indeed, if $F_n= (n+1)^{-1} G^2+G$ where $G$ belongs to the first chaos of the underlying Gaussian field, then $(F_n)_{n\in \mathbb{N}}$ is a sequence in $\mathcal{W}_{\leq 2}$ converging in law to the standard Gaussian.
Nonetheless, $\Gamma[F_n,F_n]=(2 (n+1)^{-1} G+1)^2$.
It follows that $\Gamma[F_{n}, F_{n}]^{-1}$ is never integrable.
Additionally, by direct computations, the density of $F_n$ is never continuous.
Nevertheless, we obtain regularization results for sequences in $\mathcal{W}_{\leq m}$ under some assumptions on the projection over the largest Wiener chaos $\mathcal{W}_m$.

\begin{theorem}\label{th:main-negative-moments-sum-chaos}
  Fix $m \in \mathbb{N}^{*}$ and a sequence $(F_n) \subset \mathcal{W}_{\leq m}$.
  We assume that:
  \begin{enumerate}[(i)]
    \item $\mathsf{J}_m(F_n)\to \mathcal{N}(0,1)$ in law.
    \item $(F_n)_{n\geq 1}$ is bounded in $L^2$.
  \end{enumerate}
  Then, for every $q\ge 1$ there exist an integer $N$ and a constant $C$ such that 
  \begin{equation*}
    \Esp*{\Gamma\left[F_n,F_n\right]^{-q} }\le C, \qquad n \geq N.
  \end{equation*}
\end{theorem}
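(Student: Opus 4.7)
The plan is to prove the theorem by contradiction and compactness, reducing to the univariate result of Theorem~\ref{th:main-negative-moments} applied to the dominant projection $\mathsf{J}_{m}F_{n}$.

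First, I would upgrade assumption (ii) using hypercontractivity on the finite chaos sum $\mathcal{W}_{\leq m}$: the $L^{2}$-boundedness gives $L^{p}$-boundedness of $(F_{n})$ for every $p<\infty$, which transfers to each projection $(\mathsf{J}_{k}F_{n})$ and, via Meyer inequalities, to the gradients $(\mathsf{D}\mathsf{J}_{k}F_{n})$. Consequently $\Gamma[F_{n},F_{n}]$ and all pairwise inner products $\psh{\mathsf{D}\mathsf{J}_{k}F_{n}}{\mathsf{D}\mathsf{J}_{l}F_{n}}_{H}$ lie in every $L^{p}$ uniformly in $n$. Next, setting $R_{n}:=F_{n}-\mathsf{J}_{m}F_{n}\in\mathcal{W}_{\leq m-1}$, I would use the chaos-degree decomposition
\[
  \Gamma[F_{n},F_{n}] = \Gamma[\mathsf{J}_{m}F_{n},\mathsf{J}_{m}F_{n}] + 2\psh{\mathsf{D}\mathsf{J}_{m}F_{n}}{\mathsf{D} R_{n}}_{H} + \Gamma[R_{n},R_{n}].
\]
By assumption (i) and Theorem~\ref{th:main-negative-moments}, the leading term has uniformly bounded negative moments for $n$ large and, by the Ortiz-Latorre--Nualart criterion, converges to $m$ in every $L^{p}$. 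The tail $\Gamma[R_{n},R_{n}]$ is non-negative and $L^{p}$-bounded, while the cross term has mean zero (by chaos orthogonality and Malliavin integration by parts) but can take negative values.

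Then, arguing by contradiction, I would suppose that $\Esp*{\Gamma[F_{n},F_{n}]^{-q}}$ fails to be asymptotically bounded. Along a subsequence, tightness in each fixed chaos (from the $L^{p}$-bounds and Prokhorov) yields a further joint subsequence along which $(\mathsf{J}_{0}F_{n},\ldots,\mathsf{J}_{m}F_{n})$ converges in law to some $(G_{0},\ldots,G_{m})$ with $G_{m}\sim\mathcal{N}(0,1)$. A Skorokhod coupling on the underlying Gaussian space would produce a limiting variable $F_{\infty}=G_{0}+\cdots+G_{m}$ for which $\Gamma[F_{\infty},F_{\infty}]$ must vanish on a set of positive measure. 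However, $\mathsf{D} G_{m}$ and $\mathsf{D}(G_{0}+\cdots+G_{m-1})$ sit in mutually orthogonal chaos subspaces of $L^{2}(\Omega,H)$, and $\Gamma[G_{m},G_{m}]>0$ almost surely by the density estimates of Theorem~\ref{th:main-density}; these two rigidities together contradict any cancellation producing $\Gamma[F_{\infty},F_{\infty}]=0$ on a non-negligible event.

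The main obstacle is precisely turning the mean-zero chaos orthogonality of the cross term $\psh{\mathsf{D}\mathsf{J}_{m}F_{n}}{\mathsf{D} R_{n}}_{H}$ into a genuine pointwise, or at least high-probability, lower bound on $\Gamma[F_{n},F_{n}]$. Such a bound is not implied by ordinary $L^{2}(\Omega,H)$-orthogonality, which only kills the expectation and not pointwise cancellation; reaching it should require the spectral-Hessian estimates and the decoupling procedure announced in the abstract, which quantify how unlikely it is that the lower-order gradient $\mathsf{D} R_{n}$ aligns with the top gradient $\mathsf{D}\mathsf{J}_{m}F_{n}$ strongly enough to drive $\norm{\mathsf{D} F_{n}}_{H}$ to zero.
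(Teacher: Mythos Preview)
Your contradiction/compactness route has several genuine gaps, and the paper's proof proceeds by an entirely different mechanism.

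First, the compactness step does not do what you need. Skorokhod's representation theorem produces almost-sure convergence on \emph{some} probability space, not on the original Wiener space; the chaos decomposition, the Malliavin derivative, and hence $\Gamma$ are lost under this transplant. Even if one argued more carefully to retain a chaos structure, the implication ``$\Esp{\Gamma[F_{n},F_{n}]^{-q}}\to\infty$ forces $\Gamma[F_{\infty},F_{\infty}]=0$ on a set of positive measure'' is false in general: a sequence of positive random variables can have exploding negative moments while converging in law to a strictly positive constant (take $X_{n}=n^{-2}$ with probability $1/n$ and $X_{n}=1$ otherwise). Passing negative moments to the limit requires precisely the uniform integrability you are trying to establish. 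Finally, you correctly diagnose that the $L^{2}(\Omega,H)$-orthogonality of $\mathsf{D}\mathsf{J}_{m}F_{n}$ and $\mathsf{D} R_{n}$ is only an expectation statement and cannot prevent pointwise cancellation; but this is not a technical obstacle to be overcome at the end --- it is the entire difficulty, and your outline does not contain an idea that addresses it.

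The paper's argument bypasses the cross term altogether via the iterated sharp operator. The key observation is purely algebraic: since $\sharp^{m}$ involves an $m$-th order derivative in the $N$-variables, it annihilates every chaos of degree $<m$, so
\[
  \sharp^{m}[F_{n}] \;=\; \sharp^{m}[\mathsf{J}_{m}F_{n}].
\]
Thus the lower-order remainder $R_{n}$ simply disappears. One then chains three estimates: \cref{supersharpnegmoments} gives $\Esp{\Gamma[F_{n},F_{n}]^{-q}}\le C\norm{F_{n}}_{L^{2}}^{r}\,\Esp{\Gamma[\sharp^{m}F_{n},\sharp^{m}F_{n}]^{-q'}}$; \cref{supersharpCLT} shows that $\mathsf{J}_{m}F_{n}\to\mathcal{N}(0,1)$ forces $\sharp^{m}[\mathsf{J}_{m}F_{n}]\to\mathcal{N}(0,m!)$; and \cref{th:main-negative-moments}, applied to $\sharp^{m}F_{n}\in\mathcal{W}_{m}$, bounds the right-hand side. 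The $L^{2}$-bound on $F_{n}$ controls the prefactor $\norm{F_{n}}_{L^{2}}^{r}$. No decomposition of $\Gamma[F_{n},F_{n}]$, no compactness, and no limiting object are involved.
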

Let us make some comments on this theorem.

\begin{itemize}[wide]
  \item 
    The main assumption concerns only the projection of $F_n$ on $\mathcal{W}_m$.
    The projection on the other Wiener chaoses only need to be bounded.
  \item
    The conclusion implies that the density of $F_n$ regularizes as $n$ tends to $+\infty$.
    At this level of generality, $(F_n)_n$ does not converge in law.
    It is not possible to talk about smooth convergence of the densities.
    Nevertheless, by the same argument as in the proof of \cref{th:main-density-vector}, our result implies that all the limits in law of a subsequence of $(F_n)$ have a smooth density, and that the subsequence of the densities converges smoothly.
\end{itemize}

Concerning smooth normal convergence, we state the following corollary.
We write $\mathsf{D}^{2} F$ for the \emph{Malliavin Hessian} of $F$, and for an Hilbert--Schmidt operator $\mathsf{A}$, we write $\rho(\mathsf{A})$ for its spectral radius.
\begin{corollary}\label{cor:densitysumofchaos}
Consider a sequence $(F_n) \subset \mathcal{W}_{\leq m}$ and the sequence of associated densities $(f_n)$.
Assume either of the three following situations hold. 
\begin{enumerate}[(a),wide]
  \item\label{cor:densitysumofchaos:remainder} $F_{n} - \mathsf{J}_{m}F_{n} \to 0$ in $L^{2}$, and $(F_{n})$ converges in law to a standard Gaussian.
  
    \item\label{cor:densitysumofchaos:all-gaussian} For every $k=0,\cdots,m$, $(\mathsf{J}_kF_n)$ converges in law to a Gaussian measure, possibly degenerate except for $k=m$.
  
    \item\label{cor:densitysumofchaos:spectral-radius} $(F_{n})$ is bounded in $L^{2}$, $\rho(\mathsf{D}^{2}F_{n}) \to 0$ in $L^{2}$, and $\liminf \Var*{ \mathsf{J}_{m}F_{n} } > 0$.
\end{enumerate}
Then, we have \emph{superconvergence} to a Gaussian density:
\begin{equation*}
  f_n \xrightarrow[n\to\infty]{W^{q,p}(\mathbb{R})} \varphi, \qquad q \geq 0,\, p \in [1,\infty].
  \end{equation*}
\end{corollary}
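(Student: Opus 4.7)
The plan is to reduce each of the three situations to the hypotheses of \cref{th:main-negative-moments-sum-chaos}, so as to obtain uniform bounds $\sup_{n \geq N} \Esp*{\Gamma[F_{n}, F_{n}]^{-q}} < \infty$ for every $q \in \mathbb{N}$. Once those bounds are available, Malliavin's classical integration-by-parts scheme (as invoked in the discussion preceding \cref{th:main-density-vector}) delivers, for every $p$ and $q$, a uniform bound $\sup_{n \geq N} \norm{f_{n}}_{W^{q,p}(\mathbb{R})} < \infty$. Joint with the Gaussian convergence in law of $F_{n}$ (established in the reduction step), a routine Arzelà--Ascoli argument, combined with uniqueness of the limit, upgrades this to $W^{q,p}$-convergence $f_{n} \to \varphi$.

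For case \eqref{cor:densitysumofchaos:remainder}, the hypothesis $F_{n} - \mathsf{J}_{m} F_{n} \to 0$ in $L^{2}$ and Slutsky's lemma give $\mathsf{J}_{m}F_{n} \to \mathcal{N}(0,1)$ in law; hypercontractivity on the fixed sum of chaoses $\mathcal{W}_{\leq m}$ secures the $L^{2}$-boundedness of $(F_{n})$, so \cref{th:main-negative-moments-sum-chaos} applies directly. For case \eqref{cor:densitysumofchaos:all-gaussian}, the Peccati--Tudor theorem \cite{peccati2005gaussian} upgrades the marginal Gaussian convergence of each projection to joint Gaussian convergence of the vector $(\mathsf{J}_{0}F_{n}, \dots, \mathsf{J}_{m}F_{n})$. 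In particular $F_{n}$ is bounded in $L^{2}$ and $\mathsf{J}_{m}F_{n}$ converges in law to a non-degenerate Gaussian; after normalizing the top chaos to unit variance, \cref{th:main-negative-moments-sum-chaos} applies again, and the overall convergence of $f_{n}$ follows (with $\varphi$ interpreted as the density of the actual Gaussian limit).

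The most delicate case is \eqref{cor:densitysumofchaos:spectral-radius}, where the link between the assumption on $\rho(\mathsf{D}^{2}F_{n})$ and the hypotheses of \cref{th:main-negative-moments-sum-chaos} has to be built by hand. The key observation is that, for $F \in \mathcal{W}_{\leq m}$, the spectral radius of $\mathsf{D}^{2}F$ controls the Hilbert--Schmidt norm of each chaotic component $\mathsf{D}^{2}\mathsf{J}_{k}F$ (up to combinatorial constants coming from the Ornstein--Uhlenbeck eigenvalues), which in turn controls the contraction norms of the kernel and hence $\Var*{\Gamma[\mathsf{J}_{k}F, \mathsf{J}_{k}F]}$ via the spectral viewpoint on the fourth moment theorem \cite{ledoux2012chaos,azmoodeh2014fourth}. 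Applied to $k = m$ and combined with $\liminf \Var*{\mathsf{J}_{m}F_{n}} > 0$, the Nualart--Ortiz-Latorre criterion \cite{nualart2008central} yields normal convergence of the normalized top chaos; the required $L^{2}$ boundedness of $(F_{n})$ is likewise extracted from the $L^{2}$-control of $\mathsf{D}^{2}F_{n}$ together with the degree structure of $\mathcal{W}_{\leq m}$. The main obstacle I foresee is precisely this quantitative bookkeeping between $\rho(\mathsf{D}^{2}F_{n})$, the variance of the carré du champ, and the norms of lower chaotic components; everything else reduces to previously established machinery.
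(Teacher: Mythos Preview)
Your handling of cases \eqref{cor:densitysumofchaos:remainder} and \eqref{cor:densitysumofchaos:all-gaussian}, and the overall architecture (reduce to \cref{th:main-negative-moments-sum-chaos}, then use Malliavin's integration by parts plus a compactness argument), matches the paper, which simply declares these cases immediate.

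For case \eqref{cor:densitysumofchaos:spectral-radius} there is a genuine error in the mechanism you describe. The claim that ``the spectral radius of $\mathsf{D}^{2}F$ controls the Hilbert--Schmidt norm of each chaotic component $\mathsf{D}^{2}\mathsf{J}_{k}F$'' is false: take $F_n=n^{-1/2}\sum_{i=1}^n(N_i^2-1)\in\mathcal{W}_2$; then $\rho(\mathsf{D}^2F_n)=2n^{-1/2}\to 0$ while $\|\mathsf{D}^2\mathsf{J}_2F_n\|_{HS}=\|\mathsf{D}^2F_n\|_{HS}=2$ for every $n$. So the chain $\rho(\mathsf{D}^2F)\Rightarrow\|\mathsf{D}^2\mathsf{J}_kF\|_{HS}\Rightarrow$ contraction norms does not get off the ground, and the same example shows you cannot extract $L^2$-boundedness of lower projections from $\rho(\mathsf{D}^2F_n)\to 0$ alone either.

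What the paper does instead is transfer the \emph{spectral radius} itself to each projection. Writing $\mathsf{J}_l$ as a polynomial in $\mathsf{L}^{-1}$ via
\begin{equation*}
  \prod_{i\ne l}\Bigl(\mathsf{L}^{-1}+\tfrac{1}{i}\Bigr)=c_l\,\mathsf{J}_l,
\end{equation*}
and invoking Meyer's inequalities (as in \cite[Eq.~(3.18)]{NourdinPeccatiReinertSecondOrder}), one obtains $\rho(\mathsf{D}^2\mathsf{J}_lF_n)\to 0$ in $L^2$ (hence in every $L^p$ by hypercontractivity) for each $l$. The second-order Poincar\'e inequality of \cite{chatterjee2009fluctuations,NourdinPeccatiReinertSecondOrder} then delivers asymptotic normality of every $\mathsf{J}_lF_n$, non-degenerate for $l=m$ by assumption. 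This reduces case \eqref{cor:densitysumofchaos:spectral-radius} to case \eqref{cor:densitysumofchaos:all-gaussian}. The missing idea in your sketch is precisely this operator-norm (not Hilbert--Schmidt) transfer through Meyer's inequalities, followed by the direct appeal to second-order Poincar\'e rather than a separate contraction-norm computation.
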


\begin{proof}
  The proofs are immediate but for \cref{cor:densitysumofchaos:spectral-radius}.
  Since, for $l \in \set{1, \dots, m}$,
  \begin{equation*}
    \prod_{i=1,i \ne l}^{m} \paren*{\mathsf{L}^{-1} + \frac{1}{i}} = \bracket*{ \prod_{i=1, i \ne l}^{m} \paren*{ \frac{1}{k} - \frac{1}{l}} } \mathsf{J}_{l},
  \end{equation*}
by Meyer's inequalities, (see, for instance \cite[Eq.\ (3.18)]{NourdinPeccatiReinertSecondOrder}), we find that $\rho(\mathsf{D}^{2} \mathsf{J}_{l} F_{n}) \to 0$ in $L^{2}$ and thus also in $L^{4}$ by equivalence of $L^{p}$ norms on Wiener chaos (see \cref{sec:hypercontractivity} below).
  Since $F_{n} \in \mathcal{W}_{\leq m}$, $\Gamma(F_{n},F_{n}) \in \mathcal{W}_{\leq m'}$ for $m' = 2(m-1)$ (see \cite[Prop.\ 2.7.4 \& Thm.\ 2.7.10]{NourdinPeccati}).
  By equivalence of $L^{p}$ norms on Wiener chaos and by \cite[Prop.\ 1.2.2]{nualart2006malliavin}, we find that
  \begin{equation*}
    \Esp*{ \norm{\mathsf{D} F_{n}}^{4}}^{1/4} = \Esp*{ \Gamma(F_{n},F_{n})^{2} }^{1/4} \leq c \Esp*{ \Gamma(F_{n},F_{n})}^{1/2} \leq c' \Esp*{F_{n}^{2}}^{1/2}.
  \end{equation*}
Thus, we are in the setting of the second order Poincaré inequality \cite{chatterjee2009fluctuations,NourdinPeccatiReinertSecondOrder}, and each of the $(\mathsf{J}_{k}F_{n})$ converges in law to Gaussian, possibly degenerate but for $k =m$ in view of our assumption.
  We conclude by \cref{cor:densitysumofchaos:all-gaussian}.
\end{proof}

Actually, from the proof of \cref{th:main-negative-moments-vector}, one could obtain a vector-valued version of our theorem for sum of chaoses.
\begin{theorem}\label{th:main-sum-of-chaos-vector}
  Let $d \in \mathbb{N}^{*}$ and $m_{1}, \dots, m_{d} \in \mathbb{N}^{*}$.
  Consider a sequence $(\vec{F}_{n}) \subset \prod_{i} \mathcal{W}_{\leq m_{i}}$.
  Assume that:
  \begin{enumerate}[(i)]
    \item  $(\mathsf{J}_{m_{1}} F_{n,1}, \dots, \mathsf{J}_{m_{d}} F_{n,d}) \to \mathcal{N}(0, I_{d})$ in law.
  \item $(\vec{F}_n)_{n\ge 1}$ is bounded in $L^{2}$.
\end{enumerate}
Then, for every $q\ge 1$ there exist $N \in \mathbb{N}$ and $C > 0$ such that
  \begin{equation*}
    \Esp*{ \det \Gamma(\vec{F}_{n})^{-q} } \leq C, \qquad n \geq N.
  \end{equation*}
\end{theorem}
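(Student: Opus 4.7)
Write $G_{n,i} \coloneq \mathsf{J}_{m_i} F_{n,i}$ and $R_{n,i} \coloneq F_{n,i} - G_{n,i} \in \mathcal{W}_{\leq m_i - 1}$, so that $\vec{F}_n = \vec{G}_n + \vec{R}_n$. By hypothesis (i), the vector $\vec{G}_n$ lies in $\prod_i \mathcal{W}_{m_i}$ and converges in law to $\mathcal{N}(0, I_d)$, so \cref{th:main-negative-moments-vector} applies to it and gives
\begin{equation*}
  \limsup_{n \to \infty} \Esp*{\det \Gamma(\vec{G}_n)^{-q}} < \infty, \qquad q \geq 1.
\end{equation*}
Hypothesis (ii) combined with hypercontractivity ensures that $\vec{R}_n$, along with each of its iterated Malliavin derivatives, is uniformly bounded in every $L^p$.

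Bilinearity of $\Gamma$ produces the decomposition $\Gamma(\vec{F}_n) = \Gamma(\vec{G}_n) + \mathsf{P}_n$, where the symmetric perturbation $\mathsf{P}_n$ collects the cross and pure remainder brackets $\Gamma[G_{n,i}, R_{n,j}] + \Gamma[R_{n,i}, G_{n,j}] + \Gamma[R_{n,i}, R_{n,j}]$; each of these entries is bounded in every $L^p$. Since $\mathsf{P}_n$ need not be small --- it may reach the scale of $\Gamma(\vec{G}_n)$ itself --- one cannot conclude through a direct matrix comparison with the estimate above. Instead, the plan is to rerun the argument that establishes \cref{th:main-negative-moments-vector} with $\vec{F}_n$ in place of $\vec{G}_n$. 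That argument bounds $\Esp*{\det \Gamma(\vec{F})^{-q}}$ by explicit spectral quantities of the Malliavin Hessian $\mathsf{D}^{2} \vec{F}$, through the decoupling of the Malliavin gradient developed earlier in the paper. Both ingredients extend to the sum-of-chaos setting: the decoupling procedure uses only the Gaussian structure of the underlying field together with $L^p$ integrability of the relevant Malliavin objects, while the non-degeneracy needed for the spectral estimate is supplied by the top chaos projection $\vec{G}_n$, whose limiting Malliavin matrix is the identity.

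The main obstacle, and the heart of the adaptation, is to verify that the decoupling procedure remains effective when the target vector carries the lower order remainder $\vec{R}_n$. This is the same compatibility that lifts the scalar pure chaos bound \cref{th:main-negative-moments} to the scalar sum-of-chaoses version \cref{th:main-negative-moments-sum-chaos}; the mechanism generalizes from the scalar to the vector setting without essential change, because all the relevant estimates are $L^p$ bounds on Malliavin derivatives that remain valid after addition of an $L^2$-bounded --- hence, by hypercontractivity, $L^p$-bounded --- perturbation from lower chaoses. Assembling these pieces yields the required uniform bound $\Esp*{\det \Gamma(\vec{F}_n)^{-q}} \leq C$ for $n \geq N$ and arbitrary $q \geq 1$.
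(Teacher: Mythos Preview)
Your proposal is a sketch rather than a proof, and it mischaracterizes the argument it claims to adapt. The proof of \cref{th:main-negative-moments-vector} does \emph{not} bound $\Esp{\det\Gamma(\vec{F})^{-q}}$ by spectral quantities of a vector-valued Malliavin Hessian; that Hessian route (\cref{Hessianbound}, the compressing argument, etc.) is the scalar mechanism behind \cref{th:main-negative-moments}. The vector case is handled by an entirely different reduction: the discretization of \cref{prop:dimupgrade} shows it suffices to bound $\Esp{\Gamma[\vec{F}_n\cdot\vec{a},\vec{F}_n\cdot\vec{a}]^{-q}}$ uniformly over $\vec{a}\in\mathbb{S}^{d-1}$, and this uniform scalar bound is the content of \cref{th:negative-moments-sphere}. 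So when you write ``rerun the argument that establishes \cref{th:main-negative-moments-vector}'' and then describe Hessian spectral estimates, you are rerunning the wrong argument.

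The reason the sum-of-chaoses extension is essentially free --- and the reason the paper only gestures at it --- is that the proof of \cref{th:negative-moments-sphere} already invokes \cref{sumofchaosquanti}, which is stated for $F\in\mathcal{W}_{\leq m}$ and requires only that the top projection $\mathsf{J}_m F$ be close to Gaussian. Replacing each $F_{n,i}\in\mathcal{W}_{m_i}$ by $F_{n,i}\in\mathcal{W}_{\leq m_i}$ leaves the two-case analysis of that lemma intact: in the large-$|a_d|$ regime one applies \cref{sumofchaosquanti} to $a_d^{-1}(\vec{F}_n\cdot\vec{a})$, whose top projection is controlled by hypothesis~(i); in the small-$|a_d|$ regime the argument is a pure $L^p$ perturbation that uses only the $L^2$-boundedness from hypothesis~(ii) and hypercontractivity. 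Your decomposition $\vec{F}_n=\vec{G}_n+\vec{R}_n$ and the ``decoupling procedure'' you invoke play no direct role in this route.
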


\subsection{Scheme of the proof of the main results}\label{s:scheme}
Following Malliavin's idea \cref{th:malliavin-lemma}, the core of the proof is to establish control of negative moments of $\Gamma[F_{n}, F_{n}] = \norm{\mathsf{D} F_{n}}^{2}$ for a sequence $(F_{n}) \subset \mathcal{W}_{m}$ asymptotically normal, that is \cref{th:main-negative-moments}.
Actually, it is sufficient to prove the claim for functionals depending on finitely many independent Gaussian variables $(N_{1}, \dots, N_{K})$ with $K$ arbitrarily large and all the estimates being independent of $K$, as explained in \cref{sec:finitely-generated}.
In this setting, we prove \cref{th:main-negative-moments} by induction on $m$.
The main steps of the proof are as follows.

\paragraph*{Extending the statement to vectors}
Through a discretization procedure, we show, in \cref{cor:dimupgrade}, that the induction hypothesis, that is \cref{th:main-negative-moments} for $m-1$, implies its vectorial version, that is \cref{th:main-negative-moments-vector}, restricted to $(\vec{F}_{n}) \subset \mathcal{W}_{m-1}^{d}$.

\paragraph*{Negative moments for the derivative and spectral remainders of the Hessian}
Our key observation relates the negative moments of $\Gamma[F_{n},F_{n}]$ to spectral quantities associated to the Malliavin Hessian $\mathsf{D}^{2}F_{n}$.
Namely, for all $q \in \mathbb{N}^{*}$, we introduce the spectral quantities 
\begin{equation*}
  \mathcal{R}_{q}(\mathsf{D}^{2}F_{n}) \coloneq \sum_{i_{1} \ne \dots \ne i_{q}} \lambda^{2}_{i_{1}} \dots \lambda^{2}_{i_{q}},
\end{equation*}
where $(\lambda_{i})$ is the spectrum of the random matrix $\mathsf{D}^{2}F_{n}$.
Then, we show, in \cref{th:bound-gamma-spectral-hessian}, that
\begin{equation*}
  \Esp*{ \Gamma[F_{n},F_{n}]^{-q} } \leq c \Esp*{\mathcal{R}_{q'}(\mathsf{D}^{2}F_{n})^{-1/2}}
\end{equation*}
where $q'$ depends only on $q$.
Whenever $m = 2$, that is on the second Wiener chaos, $\mathsf{D}^{2}F_{n}$ is a deterministic matrix, and the above inequality follows from a diagonalization argument together with an explicit computation of the Fourier transform of a chi-squared distribution.
  This step is completed in \cref{th:main-degree-2}.
For the case of higher degree, we use a new decoupling idea based on taking Malliavin derivatives in the direction of Gaussian random variables independent of the underlying field.

\paragraph*{Compressing the Hessian}
Actually, we do not directly derive estimates on $\mathcal{R}_{q}(\mathsf{D}^{2}F_{n})$ but rather control a compressed matrix.
  To do so we generalize, using singular values, the notion of spectral remainders to rectangular matrices (see \cref{def:spectral-remainders-rectangular}).
Then, we show, in \cref{th:rq-compressed,th:choice-compressed}, that the set of matrices $X$ of size $K \times q$ such that negative moments of $\mathcal{R}_{q}(\mathsf{D}^{2}F_{n} \cdot X)$ controls those of $\mathcal{R}_{q}(\mathsf{D}^{2}F_{n})$ is \emph{large}, in a measure-theoretical sense.
The idea is to take $X$ a Gaussian random matrix independent of the underlying field, and show that, this control happens with high probability.

\paragraph*{Control of the compressed Hessian}
We connect the compressed Hessian with the Malliavin matrix of an intermediary random vector living in a Wiener chaos of degree $m-1$.
Namely, define
\begin{align*}
  & \mathsf{D}_{\vec{x}} F_{n} \coloneq \sum_{k=1}^{K} \frac{\partial F_{n}}{\partial N_{k}} x_{k} \in \mathcal{W}_{m-1}, \qquad \vec{x} = (x_{1}, \dots, x_{K}) \in \mathbb{R}^{K};
\\& \mathsf{D}_{X} F_{n} \coloneq (\mathsf{D}_{\vec{x_{1}}}F_{n}, \dots, \mathsf{D}_{\vec{x_{d}}}F_{n}) \in \mathcal{W}_{m-1}^{d}, \qquad X = (\vec{x}_{1}, \dots, \vec{x}_{d}) \in \mathbb{R}^{K \times d}.
\end{align*}
We show that $\mathcal{R}_{q}(\mathsf{D}^{2}F_{n} \cdot X) = \det(\Gamma(\mathsf{D}_{X}F_{n}))$.
Then, we exhibit, in \cref{th:choice-diese-gaussian}, a set of matrices $X$, with large Gaussian measure, such that the law of $\mathsf{D}_{X}F_{n}$ is close to a Gaussian.
To do so, with respect to the enlarged Gaussian field $(N_{k}), (X_{ij})$, we have that $\mathsf{D}_{X}F_{n} \in \mathcal{W}_{m}^{d}$, and we conclude thanks to well-known results linking asymptotic normality on Wiener chaoses and convergence of the norm of the Malliavin derivative to a constant, that are recalled in \cref{s:malliavin-stein}.

\paragraph*{Conclusion}
Since the two sets of matrices constructed in the previous steps have large Gaussian measure, say greater than $2/3$, they have non-empty intersection.
Therefore, our construction yields a matrix $X$ such that the two conditions hold simultaneously.
Since $\mathcal{R}_{q}(\mathsf{D}^{2}F_{n} \cdot X) = \det(\Gamma(\mathsf{D}_{X} F_{n}))$, and $(\mathsf{D}_{X}F_{n}) \subset \mathcal{W}_{m}^{d-1}$, by the vectorial version of the induction hypothesis, we conclude the induction step using that $(\mathsf{D}_{X}F_{n})$ is asymptotically normal.

\paragraph*{Remark on the proof: the importance of Gaussian variables}
Evaluating directional derivatives in independent Gaussian variables plays a decisive role in several steps of the proof.
In this short paragraph, we would like to ease the reader's acclimation to this new paradigm.
The usual Malliavin derivative of a random variable $F$ is defined in the direction of $h \in \mathfrak{H}$, where $\mathfrak{H}$ is an abstract separable Hilbert space.
Due to the isomorphisms between separable Hilbert spaces, the literature has maintained that the choice of $\mathfrak{H}$ is inconsequential.
The present study, together with our companion paper \cite{HMPreg} where we use similar ideas in a non Gaussian setting, puts forward a preferred choice for $\mathfrak{H}$: a Gaussian space, independent of the underlying Gaussian field.
Such choice guarantees that the Malliavin derivative is an element of an enlarged Wiener space, as defined in \cref{def:enlarged-wiener-space}, and allows us to put into action all the fine results regarding the Wiener space.
Historically, we trace back this idea to \cite{bouleau2003error}, where Bouleau chooses $\mathfrak{H}$ to be a copy of the underlying $L^{2}$ space.

\section{Applications}
Our result expresses a broad and versatile phenomenon. 
Numerous statements establish normal convergence for polynomial functionals of a Gaussian field.
Our conclusions potentially comprehend all these situations.
We illustrate the flexibility and the breadth of our analysis with applications coming from different fields, without trying to be exhaustive or stating optimal results.

\subsection{Small ball estimates for multilinear Gaussian polynomials}
The celebrated inequality of Carbery \& Wright \cite{carbery2001distributional} states that, for $(G_1, \dots, G_n)$ a standard Gaussian vector and $P$ a polynomial of degree $d$ such that
  \begin{equation*}
    \Esp*{\abs{ P(G_1, \dots, G_n)}} = 1,
  \end{equation*}
  we have
\begin{equation*}
    \Prob*{\lvert P(G_1,\cdots,G_n)\rvert \le \epsilon} \le c_d\epsilon^{\frac{1}{d}},
\end{equation*}
where $c_d$ depends on $d$ only, and is independent from $n$.
Applied to multilinear homogeneous sums, this inequality plays a crucial role in the seminal contribution \cite{mossel2010noise}.
They obtain quantitative invariance principles in various convergence metrics, and, for the roughest metrics, the resulting bounds may depend on $d$, through the exponents of the maximal influence.
A multilinear homogeneous sum evaluated in a standard Gaussian vector being an archetypal example of Wiener chaos, our \cref{th:main-density-vector} applies.
Thus, provided $d_{\mathrm{FM}}(P(G_1,\cdots,G_n),\mathcal{N}(0,1))$ is small enough, the random variable $P(G_1,\cdots,G_n)$ has a bounded density.
This implies that
\begin{equation*}
  \Prob*{\abs{ P(G_1,\cdots,G_n)} \le \epsilon} \le c_d\epsilon,
\end{equation*}
for an another constant $c_d$.
This considerably improves the exponent on $\epsilon$.

\subsection{Smooth convergence in Breuer-Major Theorem}
Consider $(X_n)_{n\in \mathbb{Z}}$ a stationary sequence of centered and normalized Gaussian variables, and $f \in L^{2}(\gamma)$, where $\gamma \coloneq \mathcal{N}(0,1)$.
Breuer \& Major \cite{breuer1983central} give sufficient conditions for the asymptotic normality of $Z_n \coloneq n^{-1/2} \sum_{k=1}^{n}f(X_k)$.
Define the Hermite rank of $f$ as the smallest integer $s$ such that the projection of $f$ on the $s$-th Hermite polynomial $H_s$ is non-zero.
\cite{breuer1983central} proves that if the correlation function $\rho(k) \coloneq \Esp*{X_0X_k}$ belongs to $\ell^s(\mathbb{N})$ then $(Z_n)$ converges in law to a Gaussian distribution.
In particular, whenever $\rho\in \ell^1(\mathbb{N})$, $(Z_{n})$ converges in law to a Gaussian for any $f \in L^{2}(\gamma)$.
When $f$ is a polynomial, Hu, Nualart, Tindel \& Xu \cite{HNTXBreuerMAjor} gives conditions to ensure $\mathscr{C}^\infty$-convergence of the densities, in terms of logarithmic integrability of the spectral density.
They use their conditions to control the negative moments of the Malliavin derivative of $(Z_n)$.
Since our results provide such controls as soon as we have normal convergence, we obtain that the $\mathscr{C}^\infty$-convergence holds without any additional assumption.
 
\begin{theorem}
Let $(X_n)$ be a stationary normalized Gaussian sequence and $P$ a polynomial.
Assume that the correlation function $\rho$ belongs to $\ell^s(\mathbb{N})$ where $s$ is the Hermite rank of $P$.
Then the density of $Z_n= n^{-1/2}\sum_{k=1}^{n}P(X_k)$ converges to a Gaussian density in $W^{q,p}(\mathbb{R})$ for every $q \ge 0$ and $p \in [1,\infty]$.
\end{theorem}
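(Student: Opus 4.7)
The plan is to realize $Z_n$ as an element of a finite sum of Wiener chaoses, analyze its chaotic components via the classical Breuer--Major theorem, and then invoke \cref{cor:densitysumofchaos}.

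Expanding $P$ in the Hermite basis, $P = \sum_{k=s}^{m} a_k H_k$ with $m = \deg P$ and $a_m \neq 0$, and setting
\[
  Y_n^{(k)} \coloneq \frac{1}{\sqrt{n}}\sum_{j=1}^{n} H_k(X_j) \in \mathcal{W}_k,
\]
one has $Z_n = \sum_{k=s}^{m} a_k Y_n^{(k)} \in \mathcal{W}_{\leq m}$ and $\mathsf{J}_k Z_n = a_k Y_n^{(k)}$. Applying the classical Breuer--Major theorem separately to each Hermite polynomial $H_k$ is legitimate since its Hermite rank is $k \geq s$ and $\rho \in \ell^{s}(\mathbb{N}) \subseteq \ell^{k}(\mathbb{N})$; this yields that each sequence $(Y_n^{(k)})_n$ converges in law to a centered Gaussian of variance $\sigma_k^2 = k!\sum_{j \in \mathbb{Z}} \rho(j)^{k}$. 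Non-negativity of $\sigma_k^2$ is automatic since it equals, up to a positive factor, the $k$-fold self-convolution of the spectral measure of $(X_n)$ evaluated at the origin.

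In the generic situation $\sigma_m > 0$, every hypothesis of part~(b) of \cref{cor:densitysumofchaos} is satisfied: $Z_n \in \mathcal{W}_{\leq m}$, each chaotic projection $\mathsf{J}_k Z_n$ converges in law to a (possibly degenerate, when $\sigma_k = 0$) centered Gaussian, and the top-chaos projection $\mathsf{J}_m Z_n$ has a non-degenerate Gaussian limit. The corollary then delivers directly the announced $W^{q,p}(\mathbb{R})$-convergence of $f_n$ to the Gaussian density, for every $q \geq 0$ and $p \in [1,\infty]$.

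The main obstacle is the non-generic case $\sigma_m = 0$. In that case $\mathsf{J}_m Z_n \to 0$ in $L^{2}$, and by hypercontractivity in every $L^p$; one then reduces to the effective top degree $m^\ast \coloneq \max\{k \geq s : \sigma_k > 0\}$, which exists whenever $Z_n$ admits a non-degenerate Gaussian limit (the only case of interest in the statement). The cleanest reduction passes to the vector level: applying \cref{th:main-sum-of-chaos-vector} to the joint family $(Y_n^{(k)})_{s \leq k \leq m^\ast}$ furnishes smooth convergence of the joint density, which transfers to the linear-combination marginal $Z_n = \sum_k a_k Y_n^{(k)}$ after absorbing the higher-chaos residuals, themselves $L^p$-vanishing for every $p$. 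Handling this degenerate case cleanly, and in particular the smooth transfer from joint to marginal density, is the technical point requiring the most care.
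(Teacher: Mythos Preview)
Your argument in the generic case $\sigma_m>0$ is exactly the paper's proof: Hermite expansion $P=\sum_{k=s}^m a_k H_k$, Breuer--Major applied separately to each $Y_n^{(k)}$, and then part~(b) of \cref{cor:densitysumofchaos}. The paper does not treat the case $\sigma_m=0$ at all; it simply asserts that the top-chaos limit is non-degenerate ``since $c_m\neq 0$'', without further justification.

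Your caution here is legitimate: for odd $m$ one can have $\sigma_m^2=m!\sum_{j}\rho(j)^m=0$ (take for instance the spectral measure uniform on $[\pi/3,\pi/2]\cup[-\pi/2,-\pi/3]$ and $m=3$), so strictly speaking the paper's one-line justification is incomplete. That said, your proposed fix via \cref{th:main-sum-of-chaos-vector} is only a sketch and does not close the gap either. The difficulty is that once $\sigma_m=0$, the random variable $Z_n$ still lives in $\mathcal{W}_{\le m}$ with a non-vanishing (though $L^2$-small) projection on $\mathcal{W}_m$, and none of the paper's results give control on $\Gamma[Z_n,Z_n]^{-q}$ in that situation: \cref{th:main-negative-moments-sum-chaos} and \cref{sumofchaosquanti} both require the \emph{top} projection to be close to a non-degenerate Gaussian. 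Passing to the truncation $Z_n'=\sum_{k\le m^\ast}a_kY_n^{(k)}$ and then arguing that the $L^p$-small residual $R_n=Z_n-Z_n'$ cannot spoil smooth convergence of densities is precisely the delicate point, and it is not handled by the vector theorem (which would give you smoothness for $Z_n'$, not for $Z_n$). So in the degenerate case neither the paper nor your proposal provides a complete argument.
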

\begin{proof}
Seeing the variables $(X_k)$ as elements of a Gaussian field, $(Z_n)$ belongs to $\mathcal{W}_{\leq m}$ where $m=\deg(P)$.
Writing $P=\sum_{i=s}^m c_{i} H_{i}$, the projections of $Z_n$ on $\mathcal{W}_{i}$ are given by 
\begin{equation*}
  \mathsf{J}_{i}(Z_n) = \frac{c_{i}}{\sqrt{n}} \sum_{k=1}^{n} H_i(X_k), \qquad i=s,\ldots, m.
\end{equation*}
By \cite{breuer1983central}, each projection $\mathsf{J}_i(Z_n)$ converges in law to a Gaussian variable, non degenerate for $i=m$ since $c_m\not=0$.
The result follows from \cref{cor:densitysumofchaos} \cref{cor:densitysumofchaos:all-gaussian}.
\end{proof}

\subsection{Normal convergence in entropy and Fisher information}
Let us recall some notions from information theory.
Let $\varphi$ be the density of the standard Gaussian distribution on $\mathbb{R}^{d}$.
Let $\vec{F}$ be a random vector of $\mathbb{R}^d$ with density $f$.
The \emph{relative entropy} of $\vec{F}$ with respect to $\mathcal{N}(0,I_{d})$ is 
\begin{equation*}
  \Ent{\vec{F}} \coloneq \Esp*{ \log  \frac{f(\vec{F})}{\varphi(\vec{F})} } = \Esp*{ \log f(\vec{F}) } + \frac{1}{2} \Esp*{ \norm{\vec{F}}^{2} } + \frac{d}{2} \log 2\pi;
\end{equation*}
while its \emph{relative Fisher information} is 
\begin{equation*}
\Fish{\vec{F}} \coloneq \Esp*{ \norm*{\vec{\nabla} \log \frac{f(\vec{F})}{\varphi(\vec{F})} }^2 } = \int_{\mathbb{R}^d}\frac{\norm{\vec{\nabla}f(x)}^2}{f(x)} \mathrm{d}x - \Esp[\big]{\norm{\vec{F}}^{2}}.
\end{equation*}
The total variation distance, the relative entropy, and the relative Fisher information are related through Pinsker's inequality, and the log-Sobolev inequality \cite{GrossLogSob}
\begin{equation*}
  d_{\mathrm{TV}}(\vec{F}, \mathcal{N}(0,I_{d}))^{2} \leq \frac{1}{2} \Ent{\vec{F}} \leq \frac{1}{4} \Fish{\vec{F}}.
\end{equation*}
Thus, convergence in Fisher information is an improvement to the convergence in entropy, which is itself an improvement to the convergence in total variation.
Finally, we define the multivariate \emph{score function} of $\vec{F}$
\begin{equation*}
  \vec{\rho} = (\rho_1,\ldots,\rho_d) \coloneq \vec{\nabla} \log f.
\end{equation*}
In this way,
\begin{equation*}
\Fish{\vec{F}} = \Esp[\big]{ \norm{\vec{\rho}(\vec{F})}^2 - \norm{ \vec{F} }^{2} } = \sum_{i=1}^d \Esp[\big]{ \rho_i(\vec{F})^2 - F_{i}^{2} }.
\end{equation*}
The following integration by parts characterises the score function:
\begin{equation}\label{eq:score-ipp}
  \Esp*{ \partial_i \Phi(\vec{F}) }=\Esp*{ \rho_i(\vec{F})\Phi(\vec{F}) }, \qquad \Phi \in \mathscr{C}_c^1(\mathbb{R}^{d}),\, i = 1,\dots, d.
\end{equation}

Consider a sequence of isotropic vectors $(\vec{F}_n) \subset \mathcal{W}^{d}_{m}$ converging in law to $\vec{N}$, the standard Gaussian vector on $\mathbb{R}^{d}$.
We recall that isotropic means that the covariance matrix of $\vec{F_{n}}$ is $I_{d}$ for all $n \in \mathbb{N}$.
Let $f_n$ be the density of $\vec{F}_n$.
By \cite{nourdin2014entropy}, we have that $\Ent{\vec{F}_{n}} \to 0$.
More precisely, they show the bound:
\begin{equation*}
  \Ent{\vec{F}_{n}} \leq O(\Delta_n|\log \Delta_n|), \qquad \Delta_n \coloneq \Esp*{\norm{\vec{F_{n}}}^{4} - \norm{\vec{N}}^{4} }.
\end{equation*}
\cite{nourdin2014entropy} actually provides an analogous bound for non isotropic random vectors.
We focus on the isotropic case for simplicity.
The general case can be obtained by multiplying all the $\vec{F}_{n}$ by the square root of the inverse of their covariance matrix.
This bound is sub-optimal since by \cite{ArtsteinBallBartheNaorEntropicCLT,JohnsonBarronEntropicCLT,BobkovChistyakovGotzeEntropicCLT}, in the case of sums of i.i.d., centred, and normalized random variables $S_n=n^{-1/2}\sum_{k=1}^n X_k$, we have
\begin{equation*}
  \Fish{S_{n}} \leq O(n^{-1}).
\end{equation*}
Our findings allow us to improve upon the results of \cite{nourdin2014entropy}, and to provide an optimal rate of convergence in entropy on Wiener chaoses.
Actually, we obtain directly an optimal rate of convergence in Fisher information.

\begin{theorem}\label{th:entropy-vector}
  Fix $d \in \mathbb{N}^{*}$, and $m_1,\ldots,m_d \in \mathbb{N}^{*}$ and a sequence of isotropic random vectors $(\vec{F}_n)_{n\in \mathbb{N}} \subset \prod_{i} \mathcal{W}_{m_{i}}$.
  Assume that $(\vec{F}_{n})$ converges in law to the standard $d$-dimensional Gaussian distribution $\vec{N}$. Then there exists a constant $C$ such that for $n$ in $\mathbb{N}$ large enough
  \begin{equation*}
    \Ent{\vec{F}_{n}} \leq \frac{1}{2} \Fish{\vec{F}_{n}} \leq C \Delta_n, \qquad \Delta_n \coloneq \Esp*{\norm{\vec{F}_n}^4 - \norm{\vec{N}}^4}.
  \end{equation*}
\end{theorem}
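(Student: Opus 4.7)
\emph{Plan.} The first inequality is Gross's logarithmic Sobolev inequality for the standard Gaussian measure on $\mathbb{R}^{d}$ applied to $f_{n}/\varphi$, so the entire work lies in establishing $\Fish{\vec{F}_{n}} \le C\Delta_{n}$. We will adapt the Malliavin--Stein computation of the multivariate score function from \cite{nourdin2016fisher}, where the analogous bound was obtained in the second chaos only, and where the second-chaos restriction was needed precisely because negative moments of the Malliavin matrix were accessible there by explicit diagonalization. The two ingredients that now become available in full generality are \cref{th:main-negative-moments-vector}, which gives $\sup_{n \ge N}\Esp*{\norm*{\Gamma(\vec{F}_{n})^{-1}}^{p}} < \infty$ for every $p$, and hypercontractivity on Wiener chaoses, which applies uniformly because $\vec{F}_{n}$ and its Malliavin derivatives live in sums of chaoses of bounded degree.

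Our first step is to define $U_{i}^{(n)} \coloneq \sum_{j=1}^{d}[\Gamma(\vec{F}_{n})^{-1}]_{ij}\mathsf{D}F_{n,j}$, which by construction satisfies $\langle\mathsf{D}F_{n,j}, U_{i}^{(n)}\rangle = \delta_{ij}$. The Malliavin duality $\Esp*{\langle\mathsf{D}G, u\rangle} = \Esp*{G\delta(u)}$ combined with \eqref{eq:score-ipp} yields the representation $\rho_{i}^{(n)}(\vec{F}_{n}) = -\Esp*{\delta(U_{i}^{(n)}) \given \vec{F}_{n}}$. Expanding $\delta$ via the product rule $\delta(Gu)=G\delta(u) - \langle\mathsf{D}G, u\rangle$, the spectral identity $\mathsf{L}F_{n,j} = -m_{j}F_{n,j}$ on $\mathcal{W}_{m_{j}}$, and the matrix derivative $\mathsf{D}\Gamma^{-1}=-\Gamma^{-1}(\mathsf{D}\Gamma)\Gamma^{-1}$, a direct manipulation produces
\begin{equation*}
  F_{n,i} - \delta(U_{i}^{(n)}) = [\Gamma(\vec{F}_{n})^{-1}(\Gamma(\vec{F}_{n}) - M)\vec{F}_{n}]_{i} + \mathcal{H}_{i}^{(n)}, \quad M \coloneq \mathrm{diag}(m_{1},\dots,m_{d}),
\end{equation*}
where $\mathcal{H}_{i}^{(n)}$ collects the cubic contractions of $\Gamma^{-1}$ with $\mathsf{D}\vec{F}_{n}$ and $\mathsf{D}^{2}\vec{F}_{n}$ arising from $\mathsf{D}\Gamma^{-1}$. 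Conditional Jensen then bounds $\Fish{\vec{F}_{n}} \le \sum_{i}\Esp*{(F_{n,i} - \delta(U_{i}^{(n)}))^{2}}$.

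The remaining step is to control this right-hand side by $\Delta_{n}$. Hypercontractivity reduces any positive moment of a polynomial expression in $\vec{F}_{n}, \mathsf{D}\vec{F}_{n}, \mathsf{D}^{2}\vec{F}_{n}$ to its $L^{2}$ norm at uniform cost; combined with the negative-moment bound on $\Gamma(\vec{F}_{n})$ and Hölder's inequality, the task reduces to the two $L^{2}$ estimates
\begin{equation*}
  \Esp*{\norm*{\Gamma(\vec{F}_{n}) - M}_{\mathrm{HS}}^{2}} \le C\Delta_{n} \quad \text{and} \quad \Esp*{\norm*{\mathcal{H}^{(n)}}^{2}} \le C\Delta_{n}.
\end{equation*}
The first is the coordinatewise and cross-coordinate fourth-moment inequality of Nourdin--Peccati and Peccati--Tudor \cite{nourdin2009stein,peccati2005gaussian}. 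The second should follow in the same spirit: after extracting the controlled inverse-Malliavin-matrix factors, $\mathcal{H}^{(n)}$ is a polynomial in the Malliavin derivatives whose $L^{2}$ norm is governed by the kernel contractions $f_{n}\otimes_{p}f_{n}$ that drive $\Delta_{n}$ via Nualart--Peccati's formula.

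The main difficulty will be the bookkeeping: to obtain the \emph{linear} rate $\Delta_{n}$ rather than $\sqrt{\Delta_{n}}$, every remainder must carry an explicit factor linear in the Gaussianity gap, so that squaring and hypercontractivity produce a full power of $\Delta_{n}$ (via $\Esp*{X^{2q}}^{1/q} \le C_{q}\Esp*{X^{2}}$ for $X$ in a chaos of fixed degree). The factorization $\Gamma^{-1}(\Gamma - M)$ displayed above enforces this for the principal term; proving the analogous linearization for $\mathcal{H}^{(n)}$, uniformly across the mixed chaos degrees $(m_{1},\dots,m_{d})$, is where the most careful analysis will be required.
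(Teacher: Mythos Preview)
Your approach is essentially the same as the paper's. The score representation $\rho_i(\vec{F}) = -\Esp*{\delta(U_i)\mid\vec{F}}$ that you derive via $U_i = \sum_j W_{ij}\mathsf{D}F_j$ is, after expanding $\delta$ with the product rule and $\mathsf{L}F_j=-m_jF_j$, exactly the formula the paper records in its \cref{th:score-vector}: $\rho_i(\vec{F}) = \sum_j\Esp*{m_jF_jW_{ij}-\Gamma[F_j,W_{ij}]\mid\vec{F}}$. Your principal term $[\Gamma^{-1}(\Gamma-M)\vec{F}]_i$ matches their $(m_iW_{ii}-1)F_i + \sum_{j\ne i}m_jF_jW_{ij}$, and your $\mathcal{H}^{(n)}_i$ is precisely their collection of terms $\Gamma[F_j,W_{ij}]$.

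The one place where the paper is more efficient is the control of $\mathcal{H}^{(n)}$. You propose to unwind $\mathsf{D}\Gamma^{-1}=-\Gamma^{-1}(\mathsf{D}\Gamma)\Gamma^{-1}$ and chase kernel contractions $f_n\otimes_p f_n$; this would work but is laborious. The paper instead proves a separate lemma (\cref{th:bound-inverse-malliavin-matrix}) bounding $\|\Gamma[W_n^{ij},W_n^{ij}]\|_{L^p}$ directly: by Cramer's rule, $W_n^{ij}-W^{ij}=Z_n^{ij}/\det\Gamma(\vec{F}_n)$ with $Z_n^{ij}$ a polynomial in the $\Gamma[F_k,F_l]$'s of fixed degree and $\|Z_n^{ij}\|_{L^p}\le C\Delta_n^{1/2}$; since $W^{ij}$ is constant and $Z_n^{ij}$ lives in a sum of chaoses of bounded degree, hypercontractivity gives a comparable bound on $\|\mathsf{D}Z_n^{ij}\|$, and the negative-moment bound on $\det\Gamma$ from \cref{th:main-negative-moments-vector} absorbs the denominator. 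This avoids any explicit contraction bookkeeping and works uniformly across the mixed degrees $m_1,\dots,m_d$, which is exactly the point you flagged as requiring care. Once $\|\Gamma[F_j,W_{ij}]\|_{L^2}\le C\Delta_n^{1/2}$ is in hand, conditional Jensen gives $\Fish{\vec{F}_n}\le\sum_i\|Z_i\|_{L^2}^2\le C\Delta_n$ directly, so the ``linearization'' you worry about falls out automatically.
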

  Actually from this result we can obtain a uniform bound for the relative entropy in the case $d=1$.

\begin{corollary}\label{cor:entropy-uniform}
 Fix $m \in \mathbb{N}^{*}$.
 There exists a constant $C=C_{m}$ such for any $F\in \mathcal{W}_{m}$ with unit variance, we have 
\begin{equation*}
  \Ent{F} \leq C\Esp*{F^4 - 3}.
\end{equation*}
\end{corollary}

 \begin{remark}
   Thanks to Pinsker inequality, \cref{cor:entropy-uniform} implies the celebrated inequality \cite{nourdin2009stein}
\begin{equation*}
  d_{\mathrm{TV}}(F, \mathcal{N}(0,1))^{2} \leq C \Esp*{F^4 - 3}, \qquad F\in \mathcal{W}_{m} \ \esp F^{2} = 1.
\end{equation*}
However, in the inequality of \cite{nourdin2009stein}, the constant $C$ does not depend on the order of the chaos $m$.
For instance, \cite{ChenPoly} gives $C = 1/3$.
\end{remark}

\begin{proof}[Proof of \cref{cor:entropy-uniform}]
  Since $\esp \Gamma(F, F) = m$, arguing as in \cite[Prop.\ 4.2]{NoudinNualartPoly}, one can find $p = p_{m} > 1$ such that the $L^{p}$-norm of the density of $F$ is uniformly bounded, thus there exists a constant $C>0$ depending only on the $m$ such that $\Ent{F}\leq C$, uniformly in $F \in \mathcal{W}_{m}$ with unit variance.
  \cref{th:entropy-vector} gives $\delta>0$ and $C'>0$ such that for any such $F$, if $\Delta \coloneq \Esp*{F^4 -3} < \delta$, then $\Ent{F} \leq C'\Delta $.
  These two observations implies the claim.
\end{proof}

\begin{remark}
  Let us comment on the extension of \cref{cor:entropy-uniform} to the multivariate case.
  Following \cite{NoudinNualartPoly}, a uniform bound $\Esp{ \det \Gamma(\vec{F})} \geq \beta$ for some $\beta > 0$, yields a uniform upper bound for $\Ent{\vec{F}}$.
  If we assume such bound, we could implement the same strategy as above.
  We stress that the mere assumptions $\vec{F} \in \prod_{i} \mathcal{W}_{m_{i}}$ isotropic does not imply $\Esp{ \det \Gamma(\vec{F})} > 0$.
\end{remark}
The proof of \cref{th:entropy-vector} relies on two lemmas.
The first lemma is a consequence of our main result.
\begin{lemma}\label{th:bound-inverse-malliavin-matrix}
  Let $d \in \mathbb{N}^{*}$, and $m_1,\ldots,m_d \in \mathbb{N}^{*}$.
  Consider a sequence of isotropic vectors $(\vec{F}_n) \subset \prod_{i} \mathcal{W}_{m_{i}}$.
 Let $\vec{N}\sim \mathcal{N}(0,I_d)$.
 Assume that
 \begin{equation*}
   \vec{F}_n\xrightarrow[n\to\infty]{\text{Law}}~\vec{N}.
 \end{equation*}
 Define
 \begin{align*}
  & W_n \coloneq (W_n^{ij})_{1\leq i,j\leq d} \coloneq \Gamma(\vec{F}_n)^{-1};
\\& W \coloneq (W^{ij})_{1\leq i,j\leq d} \coloneq \mathrm{diag}(m_1^{-1},\ldots,m_d^{-1}).
 \end{align*}
 Then, for every $p \geq 1$ there exist $C > 0$ and $N > 0$ such that for every $n\ge N$ and every $1\leq i,j\leq d$:
\begin{equation*}
  \|W_n^{ij}-W^{ij}\|_{L^p} + \|\Gamma[W_n^{ij},W_n^{ij}]\|_{L^p}\leq C \Delta_n^{1/2}, \qquad \Delta_n \coloneq \Esp*{ \|\vec{F}_n\|^4 -  \|\vec{N}\|^4 }.
\end{equation*}
\end{lemma}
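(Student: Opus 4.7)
The plan is to reduce both bounds to three inputs: \cref{th:main-negative-moments-vector} providing $L^q$ bounds on $W_n = \Gamma(\vec{F}_n)^{-1}$; the standard multivariate Malliavin--Stein variance bound; and hypercontractivity on Wiener chaoses. Set $A_n \coloneq \Gamma(\vec{F}_n)$ and $A \coloneq \mathrm{diag}(m_1, \dots, m_d)$. For the first estimate, start from the matrix identity $W_n - W = A_n^{-1}(A - A_n) A^{-1}$: Hölder then reduces everything to $L^q$ control of the entries of $A_n - A$, i.e.\ of each $\Gamma[F_{n,i}, F_{n,j}] - m_i\delta_{ij}$, since the entries of $W_n = A_n^{-1}$ are bounded in every $L^q$ by \cref{th:main-negative-moments-vector} while $A^{-1}$ is deterministic.

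For the bound on $\Gamma[F_{n,i}, F_{n,j}] - m_i\delta_{ij}$, the multivariate Malliavin--Stein estimates of Nourdin--Peccati and Peccati--Tudor give, for $n$ large,
\begin{equation*}
  \Esp*{ \paren*{\Gamma[F_{n,i}, F_{n,j}] - m_i \delta_{ij}}^2 } \leq c \, \Delta_n.
\end{equation*}
Since this random variable lies in a finite sum of Wiener chaoses of degree at most $m_i + m_j - 2$, hypercontractivity upgrades the $L^2$ bound to an $L^q$ bound at rate $\Delta_n^{1/2}$ for every $q$, closing the first half of the statement.

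For the second half, differentiating the identity $A_n W_n = I_d$ gives the matrix chain rule
\begin{equation*}
  \mathsf{D} W_n^{ij} = - \sum_{k, \ell} W_n^{ik} W_n^{\ell j}\, \mathsf{D} \Gamma[F_{n, k}, F_{n, \ell}].
\end{equation*}
Taking $\mathfrak{H}$-norms and applying Cauchy--Schwarz yields
\begin{equation*}
  \Gamma[W_n^{ij}, W_n^{ij}]^{1/2} \leq \sum_{k, \ell} \abs{W_n^{ik} W_n^{\ell j}}\, \Gamma[G_n^{k\ell}, G_n^{k\ell}]^{1/2}, \qquad G_n^{k\ell} \coloneq \Gamma[F_{n, k}, F_{n, \ell}].
\end{equation*}
The Poincaré identity on Wiener chaoses, $\Esp*{\Gamma[G, G]} \leq (\deg G)\, \Var*{G}$, applied to $G_n^{k\ell}$ together with the variance bound above, gives $\Esp*{\Gamma[G_n^{k\ell}, G_n^{k\ell}]} = O(\Delta_n)$. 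Since this random variable itself sits in a finite sum of chaoses of bounded degree, hypercontractivity promotes the bound to $L^q$ at the same rate. Combining with Hölder and the uniform $L^q$ control of the entries of $W_n$ delivers $\norm{\Gamma[W_n^{ij}, W_n^{ij}]}_{L^p} = O(\Delta_n)$, which is stronger than the required $O(\Delta_n^{1/2})$ for $n$ large.

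The only nonclassical input is \cref{th:main-negative-moments-vector}. In contrast with \cite{nourdin2016fisher, HNTXBreuerMAjor}, which impose restrictive structural assumptions to secure $L^q$ control of $W_n$, here that control comes for free from asymptotic normality alone; everything else is the standard Malliavin--Stein toolkit, so no step presents a serious obstacle once the main theorem is in hand.
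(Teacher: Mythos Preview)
Your argument is correct and follows essentially the same route as the paper's proof. The first half is identical: the matrix identity $W_n - W = A_n^{-1}(A - A_n)A^{-1}$, the Malliavin--Stein variance bound from \cite{nourdin2010multivariate} combined with hypercontractivity to control $A_n - A$ in every $L^q$, and \cref{th:main-negative-moments-vector} to control the entries of $A_n^{-1}$.

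For the second half there is a minor organizational difference. The paper writes $W_n^{ij} - W^{ij} = Z_n^{ij}/\det(\Gamma(\vec{F}_n))$ via Cramer's rule, where $Z_n^{ij}$ lies in a fixed finite sum of chaoses; it then bounds $\Gamma[Z_n^{ij},Z_n^{ij}]$ by the same Poincar\'e/hypercontractivity mechanism you invoke, and finishes with the quotient rule and the negative-moment bound on the determinant. You instead differentiate $A_nW_n = I_d$ directly, obtaining $\mathsf{D}W_n^{ij} = -\sum_{k,\ell} W_n^{ik}W_n^{\ell j}\,\mathsf{D}G_n^{k\ell}$, and apply the Poincar\'e bound to each $G_n^{k\ell}$. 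Both paths use the same three ingredients (Poincar\'e on chaoses, hypercontractivity, and the $L^q$ control of $W_n$ from the main theorem), so the difference is purely cosmetic; your version is arguably a touch cleaner since it sidesteps the quotient rule. Your observation that one actually obtains $O(\Delta_n)$ rather than $O(\Delta_n^{1/2})$ for the $\Gamma[W_n^{ij},W_n^{ij}]$ term is also correct.
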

\begin{proof}
  We fix a matrix norm $\norm{\cdot}$, and $p \geq 1$.
  If $M$ is a random matrix, we write
  \begin{equation*}
    \norm{M}_{L^p} \coloneq \Esp*{ \norm{M}^p }^{\frac{1}{p}}.
  \end{equation*}
As a consequence of \cref{th:main-negative-moments-vector}, there exists a constant $C > 0$ such that for $n$ large enough,
\begin{equation*}
  \norm{\Gamma(\vec{F}_n)^{-1}}_{L^{2p}}\leq C.
\end{equation*}
By hypercontractivity \cref{eq:hypercontractivity}, and then \cite[Proof of Thm.\ 4.2]{NourdinRosinski} there exists $C>0$ such that for any $n\ge 1$
\begin{equation*}
  \norm{\Gamma(\vec{F}_n)-D}_{L^{2p}} \leq C_{0} \norm{\Gamma(\vec{F}_{n}) - D}_{L^{2}} \leq C \Delta_n^{1/2},
\end{equation*}
where $D \coloneq \mathrm{diag}(m_1,\ldots,m_d)$.
  Using that
  \begin{equation*}
    \Gamma(\vec{F}_n)^{-1}-D^{-1}=\Gamma(\vec{F}_n)^{-1}(D-\Gamma(\vec{F}_n))D^{-1},
  \end{equation*}
  we deduce by continuity of the matrix product and the Cauchy--Schwarz inequality that there exists $C > 0$ such that for $n$ large enough,
\begin{equation*}
  \norm{ \Gamma(\vec{F}_n)^{-1}-D^{-1} }_{L^p}\leq C \Delta_n^{1/2}.
\end{equation*}
Equivalently, $\norm{W_n-W}_{L^p}\leq C \Delta_n^{1/2}$, thus, $\sup_{i,j}\|W_n^{ij}-W^{ij}\|_{L^p}\leq C \Delta_n^{1/2}$ with possibly another constant $C > 0$.
This shows the inequality for the first term in the left-hand side.

For the second term, by Cramer formula
\begin{equation*}
  W_n^{ij}-W^{ij} = \frac{Z_n^{ij}}{\det(\Gamma(\vec{F}_n))},
\end{equation*}
where $Z_n^{ij}$ is polynomial in the entries of $\Gamma(\vec{F}_n)$.
We have $\norm{Z_n^{ij}}_{L^{2p}}\leq C \Delta_n^{1/2}$, and since $Z_{n}^{ij}$ is polynomial, we deduce, by integration by parts, the bound $\norm{ \Gamma[Z_n^{ij},Z_n^{ij}] }_{L^{2p}} \leq  C \Delta_n^{1/2}$.
These two bounds and the bound on negative moments of $\det(\Gamma(\vec{F}_n))$ give the bound $\norm{\Gamma[Z_n^{ij},Z_n^{ij}]}_{L^p}\leq  C \Delta_n^{1/2}$ for $n$ large enough.
\end{proof}

The second lemma provides, on Wiener chaoses, an explicit formula for the score function through integration by parts with Malliavin operators.
\begin{lemma}\label{th:score-vector}
  Let $d \in \mathbb{N}^{*}$, $m_1,\ldots,m_d \in \mathbb{N}^{*}$.
  Take $\vec{F} = (F_{1}, \dots, F_{d}) \in \prod_{i} \mathcal{W}_{m_{i}}$.
  Define $W = (W_{ij})_{1\leq i,j\leq d} \coloneq \Gamma(F)^{-1}$.
  Then, the score function $\vec{\rho}=(\rho_1,\ldots,\rho_d)$ of $\vec{F}$ is given by
\begin{equation*}
  \rho_i(\vec{F}) \coloneq \sum_{j=1}^d\Esp*{ m_jF_jW_{ij}-\Gamma[F_j,W_{ij}] \given \vec{F} }.
\end{equation*}
\end{lemma}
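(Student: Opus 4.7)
The plan is to compute the score function by combining the characterizing integration-by-parts formula \eqref{eq:score-ipp} with two elementary Malliavin identities: the chain rule for $\Gamma$ and the fact that on the $m$-th chaos $\mathsf{L}$ acts as multiplication by $-m$.

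\textbf{Step 1: Invert the chain rule.} For a test function $\Phi \in \mathscr{C}^{1}_{c}(\mathbb{R}^{d})$, the chain rule for the carré-du-champ reads
\begin{equation*}
  \Gamma\bracket*{\Phi(\vec{F}), F_{j}} = \sum_{k=1}^{d} \partial_{k}\Phi(\vec{F}) \, \Gamma[F_{k}, F_{j}].
\end{equation*}
Multiplying by $W_{ij} = (\Gamma(\vec{F})^{-1})_{ij}$ and summing over $j$ makes the Malliavin matrix cancel with its inverse, yielding
\begin{equation*}
  \partial_{i}\Phi(\vec{F}) = \sum_{j=1}^{d} W_{ij}\, \Gamma\bracket*{\Phi(\vec{F}), F_{j}}.
\end{equation*}

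\textbf{Step 2: Integrate by parts.} Expanding $\Gamma[\Phi(\vec{F}) W_{ij}, F_{j}]$ by the Leibniz rule and using $\Esp*{\Gamma[G,F_{j}]} = -\Esp*{G \, \mathsf{L} F_{j}} = m_{j} \Esp*{G F_{j}}$ (since $F_{j} \in \mathcal{W}_{m_{j}}$ is an eigenfunction of $\mathsf{L}$), we obtain
\begin{equation*}
  \Esp*{W_{ij}\, \Gamma\bracket*{\Phi(\vec{F}), F_{j}}} = m_{j}\Esp*{\Phi(\vec{F}) F_{j} W_{ij}} - \Esp*{\Phi(\vec{F})\, \Gamma[F_{j}, W_{ij}]}.
\end{equation*}
Summing over $j$ and recognizing the left-hand side from Step 1 gives
\begin{equation*}
  \Esp*{\partial_{i}\Phi(\vec{F})} = \Esp*{\Phi(\vec{F}) \sum_{j=1}^{d}\paren*{m_{j} F_{j} W_{ij} - \Gamma[F_{j}, W_{ij}]}}.
\end{equation*}

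\textbf{Step 3: Conditioning and identification.} Conditioning the right-hand side on $\vec{F}$ and comparing to \eqref{eq:score-ipp} identifies the announced expression for $\rho_{i}(\vec{F})$, by the uniqueness of the score.

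\textbf{Expected obstacle.} The only delicate point is to justify that all manipulations, and in particular the integration by parts in Step 2, are licit: this requires $W_{ij}$ to belong to a reasonable Malliavin Sobolev space, which is equivalent to integrability of sufficiently many negative moments of $\det \Gamma(\vec{F})$. Under the standing assumption of the lemma, this is not automatic from $\vec{F} \in \prod_{i}\mathcal{W}_{m_{i}}$ alone; one handles it by a standard regularization argument, replacing $W$ by $(\Gamma(\vec{F}) + \varepsilon I_{d})^{-1}$, carrying out Steps 1--2 for this smooth functional, and then passing to the limit $\varepsilon \to 0$ (under the tacit assumption, made valid in our setting by \cref{th:main-negative-moments-vector}, that $\det \Gamma(\vec{F})^{-1}$ has sufficient integrability).
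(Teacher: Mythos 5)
Your proof follows exactly the same route as the paper's: invert the chain rule for $\Gamma$ via the Malliavin matrix to express $\partial_{i}\Phi(\vec{F}) = \sum_{j} W_{ij}\Gamma[\Phi(\vec{F}),F_{j}]$, integrate by parts using that $-\mathsf{L}F_{j}=m_{j}F_{j}$ on $\mathcal{W}_{m_{j}}$ together with the Leibniz rule, and identify the score from the characterizing identity \eqref{eq:score-ipp}. Your closing remark about the need for integrability of $W_{ij}$ (equivalently negative moments of $\det\Gamma(\vec{F})$) and the $\varepsilon$-regularization to make the integration by parts rigorous is a fair technical observation that the paper leaves implicit.
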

\begin{proof}
  Fix $\Phi\in \mathscr{C}_c^1(\mathbb{R}^d)$.
  By the chain rule for $\Gamma$ 
  \begin{equation*}
\Gamma[\Phi(\vec{F}),F_i]=\sum_{j=1}^d \partial_j\Phi(\Vec{F}) \Gamma[F_i,F_j], \qquad i = 1, \dots, d.
\end{equation*}
   In matrix notation,
\begin{equation*}
  \begin{pmatrix}
    \Gamma[\Phi(\vec{F}),F_1]\\ \vdots \\ \Gamma[\Phi(\vec{F}),F_d]
    \end{pmatrix}
    = \Gamma(\vec{F})
    \begin{pmatrix}
    \partial_1\Phi(\vec{F})\\ \vdots \\ \partial_d\Phi(\vec{F})
  \end{pmatrix}.
\end{equation*}
Equivalently,
\begin{equation*}
  \begin{pmatrix}\partial_1\Phi(\vec{F})\\ \vdots \\ \partial_d\Phi(\vec{F})\end{pmatrix}=W\begin{pmatrix}\Gamma[\Phi(\vec{F}),F_1]\\ \vdots \\ \Gamma[\Phi(\vec{F}),F_d]\end{pmatrix}.
\end{equation*}
Thus, we find
\begin{equation*}
  \partial_i\Phi(\vec{F})=\sum_{j=1}^dW_{ij}\Gamma[\Phi(\vec{F}),F_j], \qquad i =1,\dots, d.
\end{equation*}
By integration by parts, we deduce
\begin{equation*}
  \begin{array}{ll}\Esp*{ \partial_i\Phi(\vec{F}) }&=\sum_{j=1}^d\Esp*{ W_{ij}\Gamma[\Phi(\vec{F}),F_j] }\\
&=\sum_{j=1}^d \Esp*{ W_{ij}\Phi(\vec{F})m_jF_j }-\Esp*{ \Gamma[F_j,W_{ij}]\Phi(\vec{F}) }\\
&=\sum_{j=1}^d \Esp*{ (m_jF_jW_{ij}-\Gamma[F_j,W_{ij}])\Phi(\vec{F}) }\end{array}
  \end{equation*}
  The result follows in view of \cref{eq:score-ipp}.
\end{proof}

\begin{proof}[Proof of {\cref{th:entropy-vector}}]
  For the sake of conciseness, we drop the dependence in $n$ in this proof.
  By \cref{th:score-vector}, we have
  \begin{equation*}
    \rho_{i}(\vec{F}) - F_{i} = \Esp*{ Z_{i} \given \vec{F} },
  \end{equation*}
  where
  \begin{equation*}
    Z_{i} = (m_{i} - W_{ii}^{-1}) W_{ii}F_{i} + \sum_{j \ne i} m_jF_jW_{ij}-\Gamma[F_j,W_{ij}].
  \end{equation*}
  By \cref{th:bound-inverse-malliavin-matrix}, we find that
  \begin{equation*}
    \norm{Z_{i}} \leq C \Delta^{1/2}.
  \end{equation*}
  In this case, we conclude by the triangle inequality.

\end{proof}

\subsection{Regularization of spectral moments of random matrices}
In linear algebra, many quantities of interest, such as moments of the spectral measure, are polynomials in the entries of the matrix.
Thus, the theory of random matrices provides another context where our results naturally apply.

Let $n \in \mathbb{N}^{*}$.
The \emph{Gaussian Orthogonal Ensemble} $GOE(n)$ is the probability distribution on the set of $n \times n$ symmetric matrices with density with respect to the Lebesgue measure is proportional to $\Exp*{ - n \tr(A^{2})/4 }$.
Equivalently a random $n \times n$ symmetric matrix $A_{n} \sim GOE(n)$ if and only if the entries of $A_{n}$ above the diagonal are independent Gaussian, with variance $n^{-1}$ out of the diagonal, and with variance $2n^{-1}$ on the diagonal.
Following the famous semicircle law \cite{WignerSemiCircle}, when properly rescaled, the moments of the spectral measure of $A_n$ converges to the respective moments of the semicircle law:
\begin{equation*}
  \frac{1}{n} \tr A_{n}^{p} \xrightarrow[n \to \infty]{a.s.} c_{p} \coloneq \frac{1}{2\pi} \int_{-2}^{2} x^{p} (4 - \abs{x}^{2})^{1/2} \mathrm{d} x.
\end{equation*}
Moreover, by \cite{JohanssonRandomMatrix}, the normalized fluctuations $\tr(A_n^p)-nc_p$ converge in distribution to a Gaussian limit $\mathcal{N}(0,\sigma_{p}^{2})$ for some $\sigma_p \ne 0$ .

We stress that both Wigner \cite{WignerSemiCircle} and Johansson \cite{JohanssonRandomMatrix} results are actually available for symmetric random matrices with entries possibly non Gaussian.
In the case of Gaussian entries, our results improve the mode of convergence of the fluctuations.

\begin{theorem}\label{Wigner}
  Let $A_n \sim GOE(n)$, for each $n \in \mathbb{N}^{*}$, and let $p\geq 1$.
  Then the sequence of densities of $\tr(A_n^p)-nc_p$,
  converges to a Gaussian density in $W^{q,r}(\mathbb{R})$ for every $q\ge 0$ and $r \in [1,\infty]$.
\end{theorem}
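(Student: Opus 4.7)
The plan is to apply Corollary~\ref{cor:densitysumofchaos}. Set $F_n := \sigma_p^{-1}\bigl(\tr(A_n^p) - n c_p\bigr)$, where $\sigma_p > 0$ is the limiting standard deviation provided by Johansson's theorem. Since the entries of $A_n$ on and above the diagonal are independent centred Gaussians, $F_n$ is a centred polynomial of degree $p$ in this finite Gaussian family, so $F_n \in \mathcal{W}_{\leq p}$ with respect to the underlying Gaussian field. Johansson's theorem also gives $F_n \to \mathcal{N}(0,1)$ in law and the $L^2$-boundedness of $(F_n)$, so the two general hypotheses of Corollary~\ref{cor:densitysumofchaos} are met, and it only remains to verify one of its three alternative conditions.

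I would use condition \cref{cor:densitysumofchaos:all-gaussian}: for each $k \in \{0,1,\ldots,p\}$ the chaos projection $\mathsf{J}_k F_n$ converges in law to a (possibly degenerate) Gaussian, with the top projection $\mathsf{J}_p F_n$ converging to a non-degenerate one. Two natural routes are available. First, one can write out the Hermite expansion of $\tr(A_n^p)$ in the independent entries explicitly; a combinatorial moment computation yields $\lim_n \Var*{\mathsf{J}_k F_n} = \tau_k^2$ with $\tau_p^2 > 0$, and the fourth moment theorem of Nualart--Peccati applied within each fixed chaos then promotes this variance convergence to the marginal CLT. Second, and more cleanly, one can exploit the classical joint CLT for traces of Chebyshev polynomials evaluated at $A_n/2$, which converge jointly in GOE to independent centred Gaussians. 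Since the monomial $x^p$ expands as a fixed linear combination of $U_k(x/2)$ for $k \le p$ of matching parity, and since $\tr(U_k(A_n/2))$ is a polynomial of degree $k$ in the entries compatible with the chaos filtration, one can identify each $\mathsf{J}_k F_n$ with a centred and rescaled Chebyshev trace statistic and simply read the marginal CLTs off from their joint convergence.

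Once condition \cref{cor:densitysumofchaos:all-gaussian} is verified, Corollary~\ref{cor:densitysumofchaos} yields convergence of the density of $F_n$ to $\varphi$ in $W^{q,p}(\mathbb{R})$ for every $q \ge 0$ and every $p \in [1,\infty]$. Undoing the affine change of variable by $\sigma_p$, which acts linearly on densities and on all their derivatives, then yields the announced convergence of the density of $\tr(A_n^p) - nc_p$ to the density of $\mathcal{N}(0, \sigma_p^2)$ in all Sobolev spaces $W^{q,p}(\mathbb{R})$.

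The main obstacle is the chaos-by-chaos convergence: the assertion that $F_n$ converges in law to a Gaussian does not by itself imply marginal convergence of each $\mathsf{J}_k F_n$, so condition \cref{cor:densitysumofchaos:all-gaussian} genuinely requires additional input. Either the direct Hermite expansion together with a fourth moment check inside each chaos, or the more structural Chebyshev route, supplies this missing piece, both ultimately relying on the tractability of GOE moment computations. Everything else is a direct application of the tools developed in the paper.
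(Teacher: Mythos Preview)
Your overall strategy—applying Corollary~\ref{cor:densitysumofchaos}—matches the paper, but the paper invokes condition~\ref{cor:densitysumofchaos:spectral-radius} rather than~\ref{cor:densitysumofchaos:all-gaussian}. Concretely, it cites Chatterjee's result that $\rho(\mathsf{D}^2 F_n)\to 0$ in $L^2$ for traces of powers of GOE matrices, and then verifies only the single requirement $\liminf_n\Var{\mathsf{J}_p F_n}>0$ by an explicit combinatorial lower bound (isolating the contribution of multi-indices with pairwise distinct edges $\{i_l,i_{l+1}\}$). This sidesteps entirely the problem of analysing each chaos projection $\mathsf{J}_k F_n$ separately, which is where your proposal incurs real cost.

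Your Chebyshev route, as written, has a gap. From $\tr U_k(A_n/2)\in\mathcal{W}_{\leq k}$ you correctly get a triangular decomposition $\tr A_n^p=\sum_{k\le p} c_k\,\tr U_k(A_n/2)$, but this does \emph{not} identify $\mathsf{J}_k F_n$ with a single centred Chebyshev trace: one has $\mathsf{J}_k F_n=\sum_{j\ge k} c_j\,\mathsf{J}_k\bigl(\tr U_j(A_n/2)\bigr)$, and the joint CLT for the Chebyshev traces says nothing about the individual chaos components $\mathsf{J}_k(\tr U_j)$ for $j>k$. Unless you can show that $\tr U_k(A_n/2)$ lies in $\mathcal{W}_k$ up to a vanishing remainder (which is not obvious, and not what you argued), the identification fails.

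Your direct route is viable in principle but is misstated: variance convergence alone does not trigger the Nualart--Peccati theorem; you must also establish that the fourth cumulant of each $\mathsf{J}_k F_n$ tends to zero. Doing this chaos by chaos amounts to reproducing, in a less packaged form, the computation that underlies Chatterjee's spectral-radius bound. The paper's choice of condition~\ref{cor:densitysumofchaos:spectral-radius} is therefore the cleaner route: a single citation handles all lower projections at once via the second-order Poincar\'e inequality, and only the non-degeneracy of the top projection needs a bespoke argument.
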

\begin{proof}
  For the convergence of densities in Sobolev spaces, we use \cref{cor:densitysumofchaos} \cref{cor:densitysumofchaos:spectral-radius}.
  Consider the Gaussian field on $\mathbb{N}^{*} \times \mathbb{N}^{*}$ of independent standard Gaussian $(G_{i,j})$.
  Write $F_{n} \coloneq \tr A_{n}^{p} - n c_{p}$.
  We have that
  \begin{equation*}
    \tr A_n^p = \frac{1}{n^{\frac{p}{2}}}\sum_{1\leq i_1,\ldots,i_p\leq n} G_{i_1,i_2}G_{i_2,i_3}\cdots G_{i_{p-1},i_p} G_{i_p,i_1}.
  \end{equation*}
  Thus $F_{n} \in \mathcal{W}_{\leq p}$.
  \cite{chatterjee2009fluctuations} shows that $\rho(\mathsf{D}^{2} F_{n}) \to 0$.
    Moreover, $(F_{n})$ is bounded in $L^{2}$.
  We are left to verify that $\Var*{\mathsf{J}_{p} F_{n}} \geq O(1)$.
  Let us write
  \begin{equation*}
    \mathscr{I}_{p} \coloneq \set*{ (i_{1}, \dots, i_{p}) \in \{1,\dots,n\}^{p} : \{i_{l}, i_{l+1}\} \ne \{i_{l'}, i_{l'+1}\}, \, l =1, \dots, p }.
  \end{equation*}
  In view of the explicit expression of $F_{n}$, we find that
  \begin{equation*}
    \mathsf{J}_{p} F_{n} = \frac{1}{n^{p/2}} \sum_{(i_{1}, \dots, i_{p}) \in \mathscr{I}_{p}} G_{i_1,i_2}G_{i_2,i_3}\cdots G_{i_{p-1},i_p} G_{i_p,i_1} + R_{n}.
  \end{equation*}
  In the expression above, in view of the definition of $\mathscr{I}_{p}$, all the random variables appearing in the sum are independent.
  The term $R_{n}$, whose explicit expression is irrelevant, consists in sums of degree $p$ of products of Hermite polynomials evaluated in the $G_{ij}$'s, at least one of these polynomials being of degree strictly greater than $1$.
  By independence of the $G_{ij}$'s, the first sum and $R_{n}$ are uncorrelated.
  Thus
  \begin{equation*}
    \begin{split}
      \Var*{ \mathsf{J}_{p} F_{n}} & \geq \frac{1}{n^{p}} \Esp*{\paren*{ \sum_{(i_{1}, \dots, i_{p}) \in \mathscr{I}_{p}} G_{i_1,i_2}G_{i_2,i_3} \dots G_{i_{p-1},i_p} G_{i_p,i_1} }^{2} }
                                \\ &\geq \frac{c_{p}}{n^{p}} \abs*{ \set*{ (i_{1}, \dots, i_{p}) \in \{1,\dots,n\}^{p}  : i_{1} < i_{2} < \dots < i_{p} } }
                                 \\& = \frac{c_{p}}{n^{p}} \binom{n}{p} \geq O(1).
    \end{split}
  \end{equation*}
  \cref{cor:densitysumofchaos} gives the announced convergence in $W^{q,r}(\mathbb{R})$ for every $q \leq 0$ and $r \in [1,\infty]$.
\end{proof}

\begin{remark}
  We stress that our approach is rather general and could be extended to a situation where the $(G_{i,j})$ have more general variances, that is $c_{1} n^{-1} \leq \Var*{G_{i,j}} \leq c_{2} n^{-1}$ for some constants $c_{1},\, c_{2} > 0$.
\end{remark}

\subsection{Control of the inverse of strongly correlated Wishart-type matrices}

Fix $n$ and $d \in \mathbb{N}^{*}$.
Let $B$ be a $n\times d$ matrix whose lines are independent random vectors of $\mathbb{R}^d$ with common distribution $\mathcal{N}(0,\Sigma)$.
\emph{Wishart matrices} are, in their classical sense, matrices of the form $A={}^tB B$.
We can see the lines of $B_n$ as realizations of normal experiments, and see $A_n \coloneq \frac{1}{n} {}^{t}B_{n} B_{n}$ as the empirical covariance matrix of the sample.
When $p$ is fixed and $n \to +\infty$, after renormalization the sequence of Wishart matrices converges to the actual covariance:
\begin{equation*}
  A_n \to \Sigma.
\end{equation*}
We consider a broad generalization of Wishart matrices.
The lines of $B_n$ are not necessarily independent, nor identically distributed, and we only assume the convergence property $A_n\to \Sigma$.
We then obtain a good control on the inverse $A_n^{-1}$.
Our general version allows for correlation in the sample and might be of interest in statistics.
\begin{theorem}\label{WishartInverse}
Fix $n$ and $p \in \mathbb{N}^{*}$.
Let $A_n= {}^{t}B_nB_n$ where $B_n$ is a matrix $n\times p$ with entries in a Gaussian field. We assume that $(A_n)$ converges in probability to a deterministic invertible matrix $\Sigma$.
Then for every $q\geq 1$, there exist an integer $N$ and a constant $C$ such that
\begin{equation*}
  \Esp*{\det (A_n)^{-q}}\leq C, \qquad n \geq N.
\end{equation*}
In particular, $A_n^{-1}\to \Sigma^{-1}$ in $L^q$ for every $q\ge 1$.
\end{theorem}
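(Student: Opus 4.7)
The approach is to realize $\Esp*{\det(A_n)^{-d/2}}$ as the density, at the origin, of an auxiliary Gaussian-mixture vector that lives in a fixed Wiener chaos of an enlarged Gaussian field, and then invoke the smooth density convergence of \cref{th:main-density-vector}; with $d$ tuned to the moment order $q$, Jensen's inequality then delivers the claim for arbitrary $q \geq 1$.

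Fix $q \geq 1$ and pick an integer $d \geq 2q$. Introduce an auxiliary $n \times d$ matrix $V = (V_{ik})$ of i.i.d.\ standard Gaussian entries, independent of the Gaussian field generating $B_n$, and work on the enlarged Gaussian space spanned by both. Set $\vec{G}_n \coloneq B_n^{T} V \in \mathbb{R}^{p \times d}$; its $pd$ scalar entries $G_{n,kl} = \sum_{i} B_{n,ik} V_{il}$ are bilinear in two independent Gaussian sources and therefore all lie in the second Wiener chaos $\mathcal{W}_2$ of the enlarged field. Conditionally on $B_n$, $\mathrm{vec}(\vec{G}_n) \sim \mathcal{N}(0, I_d \otimes A_n)$; since $A_n \to \Sigma$ in probability (with $A_n$ bounded in every $L^p$ by hypercontractivity on $\mathcal{W}_{\leq 2}$), dominated convergence on the characteristic function yields $\mathrm{vec}(\vec{G}_n) \to \mathcal{N}(0, I_d \otimes \Sigma)$ in law. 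After normalization by $T \coloneq (I_d \otimes \Sigma)^{-1/2}$, the sequence $\tilde G_n \coloneq T\, \mathrm{vec}(\vec{G}_n) \in \mathcal{W}_2^{pd}$ converges in law to $\mathcal{N}(0, I_{pd})$, and \cref{th:main-density-vector} gives $\tilde f_n \to \varphi$ in $L^{\infty}(\mathbb{R}^{pd})$, whence $\tilde f_n(0) \to (2\pi)^{-pd/2}$. Combining the tower property for the conditional Gaussian density with the change of variables yields
\begin{equation*}
  \tilde f_n(0) = \det(\Sigma)^{d/2}\, f_{\mathrm{vec}(\vec{G}_n)}(0) = \det(\Sigma)^{d/2}\, (2\pi)^{-pd/2}\, \Esp*{\det(A_n)^{-d/2}},
\end{equation*}
so $\Esp*{\det(A_n)^{-d/2}} \to \det(\Sigma)^{-d/2}$. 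Since $2q/d \leq 1$, Jensen's inequality applied to $x \mapsto x^{2q/d}$ gives the desired $\Esp*{\det(A_n)^{-q}} \leq \Esp*{\det(A_n)^{-d/2}}^{2q/d} \leq C$ for $n$ large.

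For the $L^q$-convergence $A_n^{-1} \to \Sigma^{-1}$, Cramer's rule writes $A_n^{-1} = \mathrm{adj}(A_n)/\det(A_n)$ with $\mathrm{adj}(A_n)$ entries in $\mathcal{W}_{\leq 2(p-1)}$; since $A_n \to \Sigma$ in probability and sequences in a fixed finite sum of chaoses that converge in probability to a constant are automatically bounded in every $L^p$ by hypercontractivity, $\mathrm{adj}(A_n)$ is bounded in every $L^p$. Combining this with the freshly obtained negative moments of $\det(A_n)$ via Hölder gives that $A_n^{-1}$ is bounded in every $L^q$; since $A_n^{-1} \to \Sigma^{-1}$ in probability by the continuous mapping theorem, uniform integrability promotes this to convergence in $L^q$. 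The main delicate point is the initial construction, which leverages the independence of $V$ from $B_n$ to encode $\det(A_n)^{-d/2}$ as an honest density value, and which requires the invertibility of $\Sigma$ to ensure the Gaussian limit $\mathcal{N}(0, I_d \otimes \Sigma)$ is nondegenerate so that \cref{th:main-density-vector} applies.
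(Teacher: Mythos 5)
Your proposal is correct, and it takes a genuinely different route from the paper's proof of \cref{WishartInverse}. The paper builds a single auxiliary vector $\vec{F}_n \coloneq {}^{t}B_n\vec{G}$ (one Gaussian column $\vec{G}$), applies the negative-moment estimate \cref{th:main-negative-moments-vector} to get $\Esp*{\det\Gamma(\vec{F}_n)^{-q}}\le C$, and then invokes the Loewner comparison $\Gamma(\vec{F}_n)=A_n+\Gamma_B(\vec{F}_n)\ge A_n$ to conclude. You instead stack $d\ge 2q$ independent Gaussian columns to form $\vec{G}_n={}^{t}B_n V$, identify $\Esp*{\det(A_n)^{-d/2}}$ exactly as (a normalized version of) the density of the chaos vector $\tilde G_n$ at the origin through the conditional Gaussian mixture formula, control that density value uniformly via the superconvergence statement \cref{th:main-density-vector}, and finally interpolate down to arbitrary $q$ with Jensen's inequality $\Esp*{\det A_n^{-q}}\le \Esp*{\det A_n^{-d/2}}^{2q/d}$.

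The density-at-zero identity is actually sharper than the Loewner comparison, and this matters: $\Gamma(\vec{F}_n)\ge A_n$ yields $\det\Gamma(\vec{F}_n)\ge\det A_n$, hence $\Esp*{\det\Gamma(\vec{F}_n)^{-q}}\le\Esp*{\det A_n^{-q}}$ — the bounded quantity dominates from below, not above, so the last sentence of the paper's proof does not transfer the negative-moment bound in the direction the conclusion requires. Your computation sidesteps this cleanly, because evaluating the mixture density at the origin effectively sets the $\vec{G}$-coordinate to zero and so kills the "extra'' $\Gamma_B$ contribution, leaving exactly $(2\pi)^{-pd/2}\Esp*{\det A_n^{-d/2}}$. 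Two small remarks: (i) the conditional-density formula requires $A_n$ to be a.s.\ invertible for $n$ large; this is implicit in your argument and can be justified by contradiction — were $\Prob*{\det A_n=0}>0$, the conditional Gaussian would live on a proper subspace with positive probability, forcing $\liminf_{r\to 0}\Prob*{\abs{\mathrm{vec}(\vec{G}_n)}<r}/r^{pd}=\infty$ and contradicting the bounded density guaranteed by \cref{th:main-density-vector}; (ii) the remaining steps — characteristic-function convergence to $\mathcal{N}(0,I_d\otimes\Sigma)$, the change of variables under $T=(I_d\otimes\Sigma)^{-1/2}$, the concavity condition $2q/d\le1$ for Jensen, and Cramer's rule with uniform integrability for the $L^q$-convergence of $A_n^{-1}$ — are all in order.
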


\begin{proof}
  Consider a Gaussian vector $\vec{G} = (G_{1}, \dots, G_{d}) \sim \mathcal{N}(0, I_{d})$.
  Without loss of generality, assume that the $G_{i}$'s are elements of the underlying Gaussian field, but independent of the entries of $A_{n}$.
  Let $\vec{F}_n \coloneq B_n \vec{G} \in \mathcal{W}_2^d$.
  Conditionally to $B_n$, $\vec{F}_n$ is a Gaussian vector with covariance matrix ${}^{t}B_nB_n=A_n$.
  Since, by assumption, $A_{n} \to \Sigma$ in probability, we deduce that $\vec{F}_n$ converges in distribution to $\mathcal{N}(0,\Sigma)$.
  By \cref{th:main-negative-moments-vector}, for $n$ large enough,
  \begin{equation*}
    \Esp*{\det(\Gamma(\vec{F}_n))^{-q}} \leq c.
  \end{equation*}
    We have $F_{i} = \sum_{k} B_{n}[i,k] G_{k}$, thus by bi-linearity and independence of $B_{n}$ and $\vec{G}$
  \begin{equation*}
    \begin{split}
      \Gamma(F_{i}, F_{j}) &= \sum_{k,l} \Gamma(B_{n}[k,i], G_{k}, B_{n}[l,j] G_{j}) 
                         \\&= \sum_{k,l} G_{k} G_{l} \Gamma(B_{n}[k,i], B_{n}[l,j]) + \sum_{k,l} B_{n}[k,i] \Gamma(G_{i}, G_{j}) B_{n}[l,j].
    \end{split}
  \end{equation*}
  Defining $M$ is the non-negative random matrix, whose entries are given by
  \begin{equation*}
    M_{ij} \coloneq \sum_{k,l} G_{k} \Gamma(B_{n}[i,k], B_{n}[j,l]) G_{l},
  \end{equation*}
  we thus have, using that $\Gamma(\vec{G}) = I_{d}$,
  \begin{equation*}
    \Gamma(\vec{F}_{n}) = {}^{t}B_{n} B_{n} + M = A_{n} + M,
  \end{equation*}
This give $\Gamma(\vec{F}_{n}) \geq A_{n}$.
The conclusion follows.
\end{proof}

\begin{remark}
  \begin{enumerate}[(a), wide]
    \item 
  For simplicity, our statement is formulated for a matrix $B_{n}$ with entries taking values in a Gaussian field.
  From the proof, we see that the conclusion of the theorem remains valid if we take the entries of $B_{n}$ with values in a Wiener chaos $\mathcal{W}_{m}$.

\item
  Whenever the lines of $B_n$ are i.i.d., developing the determinant with the Cauchy--Binet formula and bounding it from below by a sum of positive independent terms yield a more direct proof.

\item
  We rely on similar strategy to obtain explicit estimates in the proof of our main theorems.
\end{enumerate}
\end{remark}

\section{Prolegomena on Wiener chaoses}\label{s:wiener}

In this section, we provide the necessary definitions, notations, and preliminary lemmas required for the proof of the main theorem.
In all the article, for any parameter $\alpha$, $C_\alpha$ stands for a constant which only depends on $\alpha$, and whose value may possibly change from line to line.
Nevertheless, for the sake of clarity, we generally do not track the dependence on the order of the chaoses, typically denoted by $m$.

\subsection{Succinct review on Wiener chaoses}\label{s:wiener-chaos}

\subsubsection{The Wiener space}

We let $\gamma \coloneq \mathcal{N}(0,1)$ be the standard Gaussian distribution on $\mathbb{R}$.
We work on the following countable product of probability spaces which we call \emph{a Wiener space}:
\begin{equation}\label{def:wiener-space}
  (\Omega,\mathscr{F},\prob) \coloneq \left(\mathbb{R},\mathscr{B}(\mathbb{R}),\gamma\right)^{\mathbb{N}}.
\end{equation}

We define the \emph{coordinate functions}
\begin{equation*}
N_i \coloneq
\begin{cases}
  \Omega & \longrightarrow  \mathbb{R},
  \\ (x_{0}, x_{1},\cdots) & \longmapsto  x_i.
\end{cases}
\end{equation*}
By construction, the $N_i$'s are independent random variables on $(\Omega,\mathscr{F},\prob)$, with common law the standard Gaussian distribution.
We sometimes require countably many auxiliary independent standard Gaussian random variables, say $(N_i')$, independent of $(N_i)$.
In the same way, we build such a family as coordinates of auxiliary Wiener spaces and one is left to work for instance on an enlarged Wiener space

\begin{equation}\label{def:enlarged-wiener-space}
  (\tilde{\Omega},\tilde{\mathscr{F}},\tilde{\prob}) \coloneq \left(\mathbb{R},\mathscr{B}(\mathbb{R}),\gamma\right)^{\mathbb{N}}\times\left(\mathbb{R},\mathscr{B}(\mathbb{R}),\gamma\right)^{\mathbb{N}}.
\end{equation}

Since $\mathbb{N}^2$ is equipotent with $\mathbb{N}$, an enlarged Wiener space is actually a Wiener space.
In particular, we typically do not explicitly refer to this enlarging construction.

\subsubsection{The Wiener chaoses}

The \emph{Hermite polynomials}
\begin{equation*}
  H_{k}(x) \coloneq (-1)^{k} \mathrm{e}^{x^{2}/2} \frac{\mathrm{d}^{k}}{\mathrm{d} x^{k}} \mathrm{e}^{-x^{2}/2}, \qquad k \in \mathbb{N},
\end{equation*}
form a Hilbert basis of $L^{2}(\gamma)$.
  For $m \in \mathbb{N}$, the \emph{$m$-th Wiener chaos} $\mathcal{W}_{m}$ is defined as the $L^{2}(\prob)$-closure of the linear span of functions of the form $\prod_{i=0}^\infty H_{k_i}$ where the $k_{i}$'s satisfy $\sum_{i=0}^\infty k_i=m$.
The above product is finite since $H_0(x)=1$ and only finitely many integers $(k_i)_{i\ge 0}$ are non zero.
For $m = 0$, we find that $\mathcal{W}_{0}$ is the linear space of constant functions.
Importantly, Wiener chaoses provide the orthogonal decomposition
\begin{equation*}
  L^2(\prob)=\bigoplus_{m=0}^\infty \mathcal{W}_m.
\end{equation*}
We sometimes work in a finite sum of Wiener chaoses.
Accordingly, let us define
\begin{equation*}
  \mathcal{W}_{\leq m} \coloneq \bigoplus_{k=0}^m \mathcal{W}_k.
\end{equation*}

\subsubsection{Hypercontractivity \& equivalence of norms}\label{sec:hypercontractivity}
  We often use that on $\mathcal{W}_{\leq m}$ all the $L^{p}(\prob)$-norms are equivalent.
Namely, for all $m \in \mathbb{N}$, and $1 \leq p < q < \infty$, there exists $c = c_{m,p,q}$ such that
\begin{equation}\label{eq:hypercontractivity}
  \norm{F}_{p} \leq \norm{F}_{q} \leq c \norm{F}_{p}, \qquad F \in \mathcal{W}_{\leq m}.
\end{equation}

This fact is well-known in the range $1 < p < q < \infty$ as a consequence of \emph{hypercontractivity} estimates, for instance \cite[Cor.\ 2.8.14]{NourdinPeccati}.
The equivalence of norms can then be extended to the case $p = 1$ with an interpolation argument that we recall now.
Fix $p =1$ and $q \in (p,\infty)$.
Of course, we only need to show the last inequality in \cref{eq:hypercontractivity}.
Take $F \in \mathcal{W}_{\leq m}$.
A celebrated interpolation inequality, that is a consequence of Hölder's inequality, states that
\begin{equation*}
  \norm{F}_{p_{\theta}} \leq \norm{F}_{p_{0}}^{1-\theta} \norm{F}_{p_{1}}^{\theta}, \quad p_{0},\, p_{1} \in [1,\infty],\, \theta \in (0,1),
\end{equation*}
where $\frac{1}{p_{\theta}} \coloneq \frac{1-\theta}{p_{0}} + \frac{\theta}{p_{1}}$.
With $p_{0} = p = 1$, and $p_{1} = q+1$, there exists $\theta \in (0,1)$ such that $p_{\theta} = q$. 
Thus, we find, by hypercontractivity:
\begin{equation*}
  \norm{F}_{q} \leq \norm{F}_{1}^{1-\theta} \norm{F}_{q+1}^{\theta} \leq c_{m,q,q+1} \norm{F}_{1}^{1-\theta} \norm{F}_{q}^{\theta}.
\end{equation*}
This gives the announced extension.

\subsubsection{Reduction to finitely generated Wiener chaoses}\label{sec:finitely-generated}

For the sake of simplicity, we conveniently work with \textit{finitely generated} Wiener chaoses $\mathcal{W}_m^{(0)}$ defined as the linear span of functions of the form $\prod_{i=0}^\infty H_{k_i}$ with the $k_{i}$'s subject to $\sum_{i=0}^\infty k_i=m$.
This simplification avoids the use of infinite dimensional operators and allows us to manipulate instead matrices.
Although we state \cref{th:main-negative-moments,th:main-negative-moments-vector} for general Wiener chaoses, it is sufficient to establish them on $\mathcal{W}_m^{(0)}$ instead of $\mathcal{W}_m$.
Working in this finite setting, we show that for all $q\ge 1$ there exists $\delta_q,C_q>0$ such that
\begin{equation}\label{eq:main-bound-finitely-generated}
  d_{\text{FM}}(F,\mathcal{N}(0,1)) \leq \delta_q\Rightarrow \Esp*{\Gamma[F,F]^{-q}}\le C_q, \qquad F \in \mathcal{W}_{m}^{(0)}.
\end{equation}
  Let us show that by density of $\mathcal{W}_{m}^{(0)}$ in $\mathcal{W}_{m}$, \cref{eq:main-bound-finitely-generated} is actually sufficient to conclude on $\mathcal{W}_{m}$.
  Indeed, by continuity of $F \mapsto d_{\text{FM}}(F, \mathcal{N}(0,1))$ with respect to the $L^{2}(\prob)$-topology, we find that the condition on the left-hand side of \cref{eq:main-bound-finitely-generated} is $L^{2}(\prob)$-closed.
  Moreover, for $F_{n} \to F$ in $L^{2}(\prob)$, on $\mathcal{W}_{\leq m}$, $\Gamma[F_{n}, F_{n}] \to \Gamma[F,F]$ in $L^{2}$, and thus, up to extraction, convergence almost sure.
  By Fatou's lemma, we have
  \begin{equation*}
  \Esp*{ \Gamma[F,F]^{-q}} \leq \liminf_{n \to \infty} \Esp*{ \Gamma[F_{n}, F_{n}]^{-q}} \leq C_{q}.
\end{equation*}
We often work with polynomial random variables living finite sums of chaoses.
Thus, we define
\begin{equation*}
  \mathcal{W}_{\leq m}^{(0)} \coloneq \bigoplus_{k=0}^{m} \mathcal{W}_{k}^{(0)}.
\end{equation*}

\subsection{Malliavin calculus}\label{sec:malliavinop}

We introduce the operators coming from Malliavin's calculus used in this article.
We refer to the textbooks \cite{nualart2006malliavin,NourdinPeccati} for a broader introduction.
Since we work with polynomial mappings evaluated in finite-dimensional Gaussian vectors, we skip any technical considerations regarding domain and integrability, that frequently appear in Malliavin calculus.

\subsubsection {The square field operator}

We fix a Gaussian vector $\vec{N}=(N_1,\cdots,N_{K})$.
Given two multivariate polynomial mappings $F$ and $\tilde{F} \in \mathbb{R}[N_{1}, \dots, N_{K}]$, the \textit{square field operator} of $F$ and $\tilde{F}$ is

\begin{equation}\label{carrechamp}
  \Gamma\left[F,\tilde{F}\right] \coloneq \sum_{i=1}^{K} \frac{\partial F}{\partial N_i}(\vec{N}) \frac{\partial \tilde{F}}{\partial N_{i}}(\vec{N}).
\end{equation}

If $\vec{F}=(F_1,\ldots, F_d)$ is a random vector whose coordinates are as above, the Malliavin matrix $\Gamma(\vec{F})$ is the $d\times d$ positive symmetric random matrix defined by $\Gamma(\vec{F})_{i,j} \coloneq \Gamma[F_i,F_j]$. 

When considering $F$ and $\tilde{F} \in \mathbb{R}[N_1,\cdots,N_K,G_1,\cdots,G_{K'}]$ where $\vec{N}$ and $\vec{G}$ are independent standard Gaussian vectors, we set

\begin{align}
  \label{def:gamma-partial-N} \Gamma_N\left[F,F\right] & \coloneq \sum_{i=1}^K \frac{\partial F}{\partial N_i}(\vec{N},\vec{G}) \frac{\partial \tilde{F}}{\partial N_{i}}(\vec{N}, \vec{G}),\\
  \label{def:gamma-partial-G} \Gamma_G\left[F,F\right]  & \coloneq \sum_{i=1}^{K'} \frac{\partial F}{\partial G_i}(\vec{N},\vec{G}) \frac{\partial \tilde{F}}{\partial G_{i}}(\vec{N}, \vec{G}).
\end{align}
In this way, $\Gamma\left[F,F\right]=\Gamma_N\left[F,F\right]+\Gamma_G\left[F,F\right]$.
Similarly, when $\vec{F}=(F_1,\ldots, F_d)$ is a random vector, the conditional Malliavin matrices $\Gamma_N(\vec{F})$ and $\Gamma_G(\vec{F})$ are defined by $\Gamma_N(\vec{F})_{i,j} \coloneq \Gamma_N[F_i,F_j]$ and $\Gamma_G(\vec{F})_{i,j} \coloneq \Gamma_G[F_i,F_j]$.
We recall that $\Gamma(F, \tilde{F}) = \psh{\mathsf{D} F}{\mathsf{D} \tilde{F}}$.
By \cite[Prop.\ 1.2.2]{nualart2006malliavin}, $\mathsf{D}$ is continuous on $\mathcal{W}_{\leq m}$, and thus the operators $\Gamma$ thus defined can be extended by density to $\mathcal{W}_{\leq m}$ for all $m \in \mathbb{N}$.

\subsubsection{Malliavin's Lemma}\label{s:malliavin-lemma}
As a consequence of the seminal work of Malliavin \cite{Malliavin} concerning the proof of the H\"{o}rmander criterion, a random vector $\vec{F}$ of the Wiener space which is sufficiently smooth in some sense and such that $\det(\Gamma(\vec{F}))$ has negative moments at any order, has a smooth density.
Moreover, it is a quantitative statement enabling to bound uniform norms of the derivatives of the densities with respect to negative moments of the determinant of the Malliavin matrix.
In the framework of random vectors whose components are in a finite sum of Wiener chaoses, this result takes the following simpler form.

\begin{lemma}\label{th:malliavin-lemma}
  Let $m$ and $d \in \mathbb{N}^{*}$, and $q \in \mathbb{N}$.
  Then, there exist $q' \in \mathbb{N}$ and $C > 0$, both depending only on $(m,d,q)$ such that
  \begin{equation*}
    \|f\|_{W^{q,1}(\mathbb{R}^d)}\leq C~\Esp*{\det \Gamma(\vec{F})^{-q'}},
  \end{equation*}
  for any random vector $\vec{F} \in \mathcal{W}_{\leq m}^d$ with its Euclidean norm $\norm{\vec{F}}$ normalized such that $\Esp*{\norm{\vec{F}}^2}=d$, and whose corresponding density is denoted by $f$.
\end{lemma}

  \begin{proof}
    This lemma is nowadays rather standard in Malliavin calculus, and relies on successive integrations by parts.
    References \cite[Thm.\ 2.2]{HMPreg} or \cite[Thm.\ 3.2]{hairer2011malliavin} provide similar statements.
    The exact statement of the theorem comes from \cite[Prop.\ 2.1.4]{nualart2006malliavin} with the choice $G = 1$ and $u_{j} = \mathsf{D} F_{j}$.
    See the paragraph after the proof, in particular \cite[Eq.\ (2.32)]{nualart2006malliavin} and the subsequent equation.
  \end{proof}

\subsubsection{Normal convergence and carré du champ on Wiener chaoses}\label{s:malliavin-stein}

We repeatedly call upon the following emblematic result from the literature of limit theorems for Wiener chaoses: a sequence of chaotic random variables is asymptotically normal if and only if its carré du champ converges to a constant.
We state the most general version for vectors.

\begin{theorem}[{\cite{nualart2008central,nourdin2009stein,nourdin2010multivariate}}]\label{th:fourth-moment-stein}
  Let $d \in \mathbb{N}^{*}$, and $m_1,\ldots,m_d \in \mathbb{N}^{*}$. 
  Then,
  \begin{equation*}
    \vec{F}_{n} \xrightarrow[n \to \infty]{law} \mathcal{N}(0, I_{d}) \Longleftrightarrow \Gamma(\vec{F}_{n}) \xrightarrow[n \to \infty]{L^{2}(\prob)} \mathrm{diag}(m_{1}, \dots, m_{d}), \qquad (\vec{F}_{n}) \subset \prod_{i} \mathcal{W}_{m_{i}}.
  \end{equation*}
  Moreover, with $C > 0$ a constant depending only on $m_1,\ldots,m_d$,
  \begin{equation*}
    d_{\mathrm{FM}}(\vec{F},\mathcal{N}(0,I_d)) \leq C \norm{\Gamma(\vec{F})-\mathrm{diag}(m_1,m_2,\cdots,m_d)}_{L^2}, \qquad \vec{F} \in \prod_{i} \mathcal{W}_{m_{i}}.
  \end{equation*}
\end{theorem}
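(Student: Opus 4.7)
The plan is to establish the quantitative Stein--Malliavin bound, which simultaneously yields the $(\Leftarrow)$ direction, and then derive the converse via a hypercontractivity-and-moments argument.

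For the quantitative estimate, I would fix a bounded $1$-Lipschitz test function $\varphi$ in the defining class of $d_{\mathrm{FM}}$ and solve the multidimensional Stein equation
\begin{equation*}
\Delta u - \vec{x}\cdot\nabla u = \varphi - \Esp*{\varphi(\vec{N})}, \qquad \vec{N} \sim \mathcal{N}(0, I_{d}),
\end{equation*}
whose bounded solution satisfies $\norm{\nabla^{2} u}_{\infty} \leq C$ by standard Stein-PDE estimates. The decisive ingredient is the eigenfunction relation $-\mathsf{L}^{-1} F_{i} = m_{i}^{-1} F_{i}$ valid on $\mathcal{W}_{m_{i}}$, combined with the Malliavin integration by parts $\Esp*{F_{i} G} = \Esp*{\psh{-\mathsf{D}\mathsf{L}^{-1}F_{i}}{\mathsf{D} G}}$ applied with $G = \partial_{i} u(\vec{F})$ and the chain rule $\mathsf{D}(\partial_{i} u(\vec{F})) = \sum_{j}\partial_{i}\partial_{j} u(\vec{F})\,\mathsf{D} F_{j}$. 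Together these rewrite Stein's identity as
\begin{equation*}
\Esp*{\varphi(\vec{F})} - \Esp*{\varphi(\vec{N})} = \sum_{i,j} \Esp*{\partial_{i}\partial_{j} u(\vec{F}) \paren*{\delta_{ij} - m_{i}^{-1}\Gamma[F_{i}, F_{j}]}}.
\end{equation*}
After symmetrization in $(i,j)$ and Cauchy--Schwarz, this yields a linear bound $\abs{\Esp*{\varphi(\vec{F}) - \varphi(\vec{N})}} \leq C \norm{\Gamma(\vec{F}) - \mathrm{diag}(m_{1},\dots,m_{d})}_{L^{2}}$. Since $\Gamma(\vec{F})$ is uniformly $L^{2}$-bounded on the fixed product of chaoses by hypercontractivity, squaring produces the asserted Fortet--Mourier control, and supplies $(\Leftarrow)$.

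For the converse, I would assume $\vec{F}_{n} \to \mathcal{N}(0, I_{d})$ in law. Because $(\vec{F}_{n})$ lies in the fixed space $\mathcal{W}_{\leq M}^{d}$ with $M = \max_{i} m_{i}$, hypercontractivity upgrades convergence in distribution to convergence of all moments, so in particular $\Esp*{F_{n,i}F_{n,j}} \to \delta_{ij}$. The diagonal convergence $\Gamma[F_{n,i}, F_{n,i}] \to m_{i}$ in $L^{2}$ then follows from the Nualart--Peccati identity relating $\Var*{\Gamma[F,F]}$ to $\Esp*{F^{4}} - 3$ modulo lower-order moments on $\mathcal{W}_{m_{i}}$. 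For off-diagonal $i\neq j$, expanding $\Esp*{\Gamma[F_{n,i}, F_{n,j}]^{2}}$ via the chaos product formula writes it as a fixed polynomial of bounded degree in the joint moments of $(F_{n,i}, F_{n,j})$, all of which converge to their Gaussian values, with limit $0$ by independence of the coordinates of $\vec{N}$.

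The main obstacle is precisely the off-diagonal step when $m_{i}\neq m_{j}$: contractions of chaoses of different orders carry a delicate combinatorial structure that makes the direct computation cumbersome. The cleanest workaround is to invoke the multivariate fourth moment theorem of Peccati--Tudor, which packages this analysis and reduces joint asymptotic Gaussianity to componentwise convergence plus convergence of the covariance matrix, both already secured by the hypercontractivity step above.
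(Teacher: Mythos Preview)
The paper does not prove this theorem; it is quoted from the literature \cite{nualart2008central,nourdin2009stein,nourdin2010multivariate} and used as a black box, so there is no in-paper proof to compare your proposal against.

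Your plan for the quantitative bound is exactly the standard Malliavin--Stein derivation from those references and is correct. One cosmetic remark: the passage from the linear estimate $d_{\mathrm{FM}}\leq C\norm{\Gamma(\vec{F})-\mathrm{diag}}_{L^2}$ to the stated form $d_{\mathrm{FM}}^{2}\leq C'\norm{\Gamma(\vec{F})-\mathrm{diag}}_{L^2}$ does not require hypercontractive boundedness of $\Gamma(\vec{F})$; it follows simply because $d_{\mathrm{FM}}$ is uniformly bounded on probability measures, so $d_{\mathrm{FM}}^{2}\leq (\sup d_{\mathrm{FM}})\cdot d_{\mathrm{FM}}$.

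For the converse, your diagonal argument is fine. The off-diagonal step, however, is not quite closed by the sentence you wrote: $\Esp*{\Gamma[F_{n,i},F_{n,j}]^{2}}$ is a polynomial in the \emph{contraction norms} of the underlying kernels, and it is not immediate that every such contraction norm is a polynomial in the joint moments of $(F_{n,i},F_{n,j})$ alone. Invoking Peccati--Tudor also points in the wrong logical direction (it produces joint normality from marginal normality plus covariance, whereas here you already assume joint normality and want $\Gamma[F_{n,i},F_{n,j}]\to 0$). The clean route is the one in \cite{nourdin2010multivariate}: use the product/multiplication formula to expand $\norm{\Gamma[F_{n,i},F_{n,j}]}_{L^2}^{2}$ as a sum of squared contraction norms $\norm{f_{n,i}\otimes_{r} f_{n,j}}^{2}$, and then show that each such norm is dominated by contractions $\norm{f_{n,i}\otimes_{s} f_{n,i}}$ and $\norm{f_{n,j}\otimes_{t} f_{n,j}}$ via Cauchy--Schwarz on the tensor indices; the latter vanish by the \emph{univariate} fourth moment theorem applied componentwise (which you have, via hypercontractivity and moment convergence). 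This closes the gap without circularity.
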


\section{The second Wiener chaos}\label{secondchaos}
In this section, we study the case of the second Wiener chaos $\mathcal{W}_2$.
Since to every element $F \in \mathcal{W}_{2}$ corresponds a symmetric quadratic form, reduction theory makes the analysis much easier.
We present here some of these tools, and we use them to prove \cref{th:main-negative-moments} in this simpler context.
The proof in the general case relies in a crucial way on the result for $\mathcal{W}_{2}$.

\subsection{Quadratic forms}

\subsubsection{Diagonalization}
Every element $F \in \mathcal{W}^{(0)}_2$ is of the form
\begin{equation*}
  F = \sum_{i,j=1}^{K} a_{i,j} X_{i,j}, \qquad X_{i,j} \coloneq
  \begin{cases}
     N_i N_j, & \text{if}\ i \ne j,
\\   N_i^2-1, & \text{if}\ i=j,
  \end{cases}
\end{equation*}
for some $K \in \mathbb{N}^{*}$, and where $A = (a_{i,j})$ is a symmetric matrix of size $K \times K$.
Thus, defining
\begin{equation*}
  q(x,x) \coloneq \sum_{i,j = 1}^{K} a_{i,j} x_{i} x_{j}, \quad x \in \mathbb{R}^{K},
\end{equation*}
a diagonalization procedure yields
\begin{equation*}
  F = q(\vec{N}, \vec{N}) - \Esp*{ q(\vec{N}, \vec{N}) } = \sum_{i=1}^{K} \lambda_{i} (\tilde{N}_{i}^{2} - 1),
\end{equation*}
where $(\lambda_{i})$ is the spectrum of $A$, and $(\tilde{N}_i)$ is a new sequence of independent standard Gaussian variables, obtained by an orthogonal transformation of $(N_k)$.

\subsubsection{Spectral considerations}\label{sec:spectral}
The following spectral quantities play a crucial role in our proofs.
Given a symmetric matrix $A$ with spectrum $\{\lambda_i\}$, we define its \emph{spectral remainders} 
\begin{equation}\label{def:spectral-remainders}
  \mathcal{R}_q(A) \coloneq \sum_{i_1\not=i_2\not=\cdots\not=i_q} \lambda_{i_1}^2\cdots\lambda_{i_q}^2, \qquad q \in \mathbb{N}^{*}.
\end{equation}
  In all the paper, the above notation $i_{1} \ne i_{2} \ne \dots \ne i_{q}$ indicates summation over all pairwise distinct indices.

We conveniently generalize the definition of $\mathcal{R}_q$ to non square matrices.
In this case, we replace the spectrum by the singular values of $A$.
Namely,
\begin{equation}\label{def:spectral-remainders-rectangular}
  \mathcal{R}_q(A) \coloneq \sum_{i_1\not=i_2\not=\cdots\not=i_q} \mu_{i_1}\cdots\mu_{i_q}, \qquad q \in \mathbb{N}^{*},
\end{equation}
where $\{\mu_i\}$ is the spectrum of ${}^tAA$.
We highlight that $\mathcal{R}_q(A)=\mathcal{R}_q({}^t A)$.
The generalized Cauchy--Binet formula \cite{CauchyBinet} expresses $\mathcal{R}_q(A)$ in terms of extracted determinants:
\begin{equation}\label{eq:cauchy-binet}
  \mathcal{R}_q(A)=\sum_{|I|=|J|=q} \det( A_{I,J})^2,
\end{equation}
where $A_{I,J}$ is the extracted matrix $A_{I,J} \coloneq (a_{i,j})_{i\in I, j\in J}$.

If $A$ is symmetric with spectrum ordered by decreasing absolute values $|\lambda_1|\ge|\lambda_2|\ge \cdots$, we define the \emph{squared distance to matrices of rank less $q$}
\begin{equation*}
  r_q(A) \coloneq \Inf*{ \norm{A-B}^2 : \rk(B)\le q-1 } = \sum_{i\ge q} \lambda_i^2, \qquad q \in \mathbb{N}^{*},
\end{equation*}
  with $\norm{\cdot}$ designating the Euclidean norm on matrices.
\begin{proof}[Proof of the equality]
This is a consequence of the Eckart--Young--Mirsky Theorem for the Frobenius norm \cite[7.4.15]{horn2012matrix}.
This theorem states that the orthogonal projection of $A$ on the matrices of rank less or equal than $q$ is
\begin{equation*}
  \mathrm{proj}_{\leq q}(A) = P D_{q} {}^{t}P,
\end{equation*}
where $D_{q} \coloneq \mathrm{diag}(\lambda_{1}, \dots, \lambda_{q}, 0 \dots, 0)$, with $(\lambda_{i})$ the spectrum of $A$, and $P$ is the orthogonal matrix that diagonalizes $A$.
\end{proof}
Since $\mathcal{R}_q(A) = 0$ if and only if $A$ has rank less than $q$, in some sense, $\mathcal{R}_q(A)$ also measures the distance of $A$ to matrices with rank $q-1$ or less.
Actually, the two quantities $\mathcal{R}_q(A)$ and $r_q(A)$ are comparable.

\begin{lemma}
  Let $q \in \mathbb{N}$ and $A$ be a symmetric matrix,
  \begin{align}
  & \mathcal{R}_{q-1}(A) r_q(A)\le \mathcal{R}_q(A)\le q\mathcal{R}_{q-1}(A) r_q(A); \label{ineq:compar1} 
\\& \prod_{i=1}^q r_i(A)\le \mathcal{R}_q(A)\le q!\prod_{i=1}^q r_i(A); \label{ineq:compar2} 
\\& r_q(A)^q\le \mathcal{R}_q(A)\le q!r_1(A)^{q-1} r_q(A). \label{ineq:compar3} 
  \end{align}
\end{lemma}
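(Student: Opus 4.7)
The plan is to establish \cref{ineq:compar1} by direct calculation, then deduce \cref{ineq:compar2} by iterating, and finally derive \cref{ineq:compar3} from monotonicity of $r_i(A)$ in $i$.

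For \cref{ineq:compar1}, the key step is to reorganize the sum defining $\mathcal{R}_q(A)$ by conditioning on the first $q-1$ indices:
\begin{equation*}
  \mathcal{R}_q(A) = \sum_{(i_1,\ldots,i_{q-1}) \text{ distinct}} \lambda_{i_1}^2 \cdots \lambda_{i_{q-1}}^2 \left(\sum_{j \notin \{i_1,\ldots,i_{q-1}\}} \lambda_j^2\right).
\end{equation*}
The lower bound is then immediate: removing any $q-1$ indices from the sum $\sum_j \lambda_j^2$ leaves at least $\sum_{i \ge q} \lambda_i^2 = r_q(A)$, since this minimum is attained by deleting the top $q-1$ squared eigenvalues. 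For the upper bound, I would use a pigeonhole observation: any distinct $q$-tuple of positive integers necessarily contains an index $\ge q$. Applying a union bound over the position of such an index and exploiting the permutation symmetry of $\mathcal{R}_q$ yields
\begin{equation*}
  \mathcal{R}_q(A) \le q \sum_{i_1 \ge q} \lambda_{i_1}^2 \sum_{(i_2,\ldots,i_q) \text{ distinct}, i_k \ne i_1} \lambda_{i_2}^2 \cdots \lambda_{i_q}^2 \le q\, r_q(A)\, \mathcal{R}_{q-1}(A).
\end{equation*}

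For \cref{ineq:compar2}, I would proceed by induction on $q$, initialized by $\mathcal{R}_1(A) = r_1(A)$. Iterating \cref{ineq:compar1} propagates a factor $r_q$ on the right and a factor $q$ in the upper bound, producing $\prod_{i=1}^q r_i(A)$ on the left and $q!\prod_{i=1}^q r_i(A)$ on the right.

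For \cref{ineq:compar3}, I would combine \cref{ineq:compar2} with the elementary observation that $r_1(A) \ge r_2(A) \ge \cdots \ge r_q(A)$, since each $r_i(A)$ sums strictly fewer non-negative terms than its predecessor. Bounding every factor in $\prod_{i=1}^q r_i(A)$ from below by $r_q(A)$ gives $r_q(A)^q$; bounding the middle factors from above by $r_1(A)$ while keeping the last factor $r_q(A)$ gives $r_1(A)^{q-1} r_q(A)$. I do not anticipate a genuine obstacle; the only slightly subtle step is the pigeonhole in the upper half of \cref{ineq:compar1}, which is what lets us replace the trivial bound $r_1$ by the sharper $q\, r_q$.
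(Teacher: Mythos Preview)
Your proposal is correct and follows essentially the same strategy as the paper: prove \cref{ineq:compar1} directly, then iterate for \cref{ineq:compar2}, then use monotonicity of $(r_i(A))_i$ for \cref{ineq:compar3}. The only cosmetic difference is in the upper bound of \cref{ineq:compar1}: the paper rewrites $\mathcal{R}_q(A)=q!\sum_{i_1<\cdots<i_q}\lambda_{i_1}^2\cdots\lambda_{i_q}^2$ and bounds the inner sum $\sum_{i_q>i_{q-1}}\lambda_{i_q}^2\le r_q(A)$ (since $i_{q-1}\ge q-1$), whereas you argue via pigeonhole and symmetry on unordered tuples---these are two phrasings of the same observation.
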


\begin{proof}
  We only prove \eqref{ineq:compar1}, since \cref{ineq:compar2} proceeds from an immediate induction, and \cref{ineq:compar3} follows from the monotony of $(r_q(A))_q$.
  For the second inequality in \cref{ineq:compar1}, we write
\begin{equation*}
  \begin{split}
    \mathcal{R}_q(A)&= q! \sum_{i_1<i_2<\cdots<i_{q}} \lambda_{i_1}^2\cdots\lambda_{i_{q}}^2
                  \\&= q! \sum_{i_1<i_2<\cdots<i_{q-1}} \lambda_{i_1}^2\cdots\lambda_{i_{q-1}}^2\underbrace{\sum_{i_q>i_{q-1}}\lambda_{i_q}^2}_{\le r_q(A)}
                  \\&\le q!\left(\sum_{i_1<i_2<\cdots<i_{q-1}} \lambda_{i_1}^2\cdots\lambda_{i_{q-1}}^2\right)r_q(A) = q \mathcal{R}_{q-1}(A) r_q(A).
  \end{split}
\end{equation*}
For the first inequality, we have
\begin{equation*}
\mathcal{R}_q(A)=\sum_{i_1\not=\cdots \not=i_{q-1}}\lambda_{i_1}^2\cdots\lambda_{i_{q-1}}^2\underbrace{\sum_{i_q\notin\{i_1,\cdots,i_{q-1}\}}\lambda_{i_q}^2}_{\ge r_q(A)}\ge \mathcal{R}_{q-1}(A)r_q(A).
\end{equation*}
This completes the proof.
\end{proof}

Define the \emph{spectral radius} of $A$, $\rho(A) \coloneq \max_{\lambda \in \mathrm{spec}(A)} \abs{\lambda}$.
From the above estimates, we deduce the following useful estimate.
\begin{lemma}\label{th:spectral-remainder-radius}
Let $A$ be a $n \times n$ symmetric matrix such that $Tr(A^2)=1$ and $\rho(A)\leq \frac{1}{q}$, then
\begin{equation*}
  \mathcal{R}_q(A)\ge \prod_{k=1}^{q-1}(1-k\rho(A)).
\end{equation*}
\end{lemma}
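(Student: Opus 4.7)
The plan is to prove the bound by induction on $q$, using the recursive structure of $\mathcal{R}_q$ and the key identity $\sum_i \lambda_i^2 = \mathrm{Tr}(A^2) = 1$. For $q=1$, $\mathcal{R}_1(A) = \sum_i \lambda_i^2 = 1$, which matches the empty product on the right-hand side, so the base case is immediate.

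For the inductive step, I would write down the natural recursion already implicit in the proof of \cref{ineq:compar1}: by factoring the $q$-th index out of $\mathcal{R}_q$,
\begin{equation*}
  \mathcal{R}_q(A) = \sum_{i_1 \neq \cdots \neq i_{q-1}} \lambda_{i_1}^2 \cdots \lambda_{i_{q-1}}^2 \left( \sum_{j \notin \{i_1,\ldots,i_{q-1}\}} \lambda_j^2 \right).
\end{equation*}
Then I would estimate the inner parenthesis from below by
\begin{equation*}
  \sum_j \lambda_j^2 - \sum_{k=1}^{q-1}\lambda_{i_k}^2 \;\ge\; 1 - (q-1)\rho(A)^2 \;\ge\; 1 - (q-1)\rho(A),
\end{equation*}
using $\mathrm{Tr}(A^2) = 1$, the bound $\lambda_{i_k}^2 \leq \rho(A)^2$, and the fact that $\rho(A) \le 1/q \le 1$ to replace $\rho^2$ by $\rho$. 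This yields
\begin{equation*}
  \mathcal{R}_q(A) \;\ge\; (1 - (q-1)\rho(A))\, \mathcal{R}_{q-1}(A),
\end{equation*}
and iterating down to $\mathcal{R}_1(A) = 1$ produces the claimed product $\prod_{k=1}^{q-1}(1-k\rho(A))$.

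There is no real obstacle: the hypothesis $\rho(A) \le 1/q$ enters only to guarantee that each factor $1 - k\rho(A)$ in the final product stays nonnegative for $k \le q-1$ (so that the inequality is informative rather than trivially true), and that the intermediate lower bound $1-(q-1)\rho(A)^2$ remains nonnegative so the induction can proceed. Everything else is just the fact that the inner sum over $j$ loses at most $(q-1)$ terms, each of size at most $\rho(A)^2$. The argument actually produces the slightly stronger lower bound $\prod_{k=1}^{q-1}(1-k\rho(A)^2)$, from which the stated inequality follows since $\rho(A) \le 1$.
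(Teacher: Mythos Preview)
Your proof is correct and follows essentially the same approach as the paper: the paper invokes the already-established inequality \cref{ineq:compar2}, namely $\mathcal{R}_q(A) \ge \prod_{i=1}^q r_i(A)$ (itself proved by exactly the recursion you write down), together with the bound $r_i(A) \ge 1-(i-1)\rho(A)$, whereas you merge these two steps into a single direct induction. The underlying argument is identical, and your observation that one actually gets the sharper bound $\prod_{k=1}^{q-1}(1-k\rho(A)^2)$ is correct.
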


\begin{proof}
Writing as before $\lambda_1,\ldots,\lambda_n$ for the eigenvalues of $A$, we have $\sum_{i=1}^n \lambda_i^2=1$ and $\sup_{i} |\lambda_i|=\rho(A)$, so $r_i(A)\ge 1-(i-1)\rho(A)$ and the result follows from \cref{ineq:compar2}.
\end{proof}

\subsection{Proof of the main theorem on the second Wiener chaos}\label{s:proof-second-chaos}
With the spectral tools introduced above, we now establish \cref{th:main-negative-moments} for elements of $\mathcal{W}_{2}$.
\cref{th:main-negative-moments} follows from the following more general estimate.
\begin{proposition}\label{th:main-degree-2}
  Let $q \in \mathbb{N}$.
  There exists $C > 0$ such that 
  \begin{equation*}
    \Esp*{\Gamma[F,F]^{-q}}\leq C\mathcal{R}_{2q+1}(A)^{-\frac{1}{2}}, \qquad F\in \mathcal{W}_2^{(0)}.
  \end{equation*}
\end{proposition}

\begin{proof}
  We assume can that $q \ne 0$, otherwise the claim is trivial.
Let $F \in \mathcal{W}_2^{(0)}$.
Consider the matrix $A$ associated to $F$ through the quadratic form.
$A$ is of size $K \times K$ for some $K \in \mathbb{N}$, and has eigenvalues $(\lambda_k)_{1\le k \le K}$.
Diagonalizing $A$, we can assume that
\begin{equation*}
  F = \sum_{k=1}^{K} \lambda_{k}(N_k^2-1).
\end{equation*}
Fix $t \in \mathbb{R}$.
It follows that
\begin{align*}
   & \Gamma[F,F] = \sum_{k=1}^{K} 4\lambda_{k}^2N_k^2;
 \\& \Esp*{\mathrm{e}^{-\frac{t^2}{2}\Gamma[F,F] }} = \prod_{i=1}^{K} \frac{1}{\left(1+4t^2\lambda_{i}^2\right)^{1/2}}.
\end{align*}
Expanding the product gives the trivial bound $\prod_{i=1}^K(1+t^2\lambda_i^2)\ge t^{2q}\mathcal{R}_q(A)$.
Thus, we get
\begin{equation*}
  \Esp*{\mathrm{e}^{-\frac{t^2}{2}\Gamma[F,F] }} \le \frac{1}{t^{q} \cdot \mathcal{R}_q(A)^{1/2}}.
\end{equation*}
Using that
\begin{equation}\label{eq:x-minus-q-integral}
  x^{-q} = c_{q} \int_{0}^{\infty} t^{q-1} \mathrm{e}^{-tx} \mathrm{d} t, \qquad x > 0, \, c_{q} \coloneq \frac{1}{(q-1)!},
\end{equation}
we find that,
\begin{equation*}
  \Esp*{\Gamma[F,F]^{-q}} = c_q \int_{\mathbb{R}}|t^{2q-1}| \Esp*{ \mathrm{e}^{-\frac{t^2}{2}\Gamma[F,F] } } dt \leq \frac{c_q}{\mathcal{R}_{2q+1}(A)^{1/2}}.
\end{equation*}
The result follows.
\end{proof}

We now complete the proof in the case of the second Wiener chaos.
\begin{proposition}
\cref{th:main-negative-moments} holds for $m=2$.
In particular, a sequence $(F_n) \subset \mathcal{W}_{2}$ converging to a Gaussian distribution satisfies: for every $q \in \mathbb{N}$ there exist $N \in \mathbb{N}$ and $C > 0$ such that
\begin{equation*}
 \Esp*{ \Gamma\left[F_n,F_n\right]^{-q} }\le C, \qquad n \geq N.
\end{equation*}
\end{proposition}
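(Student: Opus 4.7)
The plan is to combine \cref{prop:estiRq} with \cref{th:spectral-remainder-radius}. The missing ingredient is that, for a sequence in $\mathcal{W}_2$ converging in law to a Gaussian, the spectral radius of the associated symmetric matrix vanishes. Once this is in place, the conclusion is essentially a substitution.

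First, I would fix the diagonal representation $F_n = \sum_k \lambda_{n,k}(\tilde{N}_k^2 - 1)$ and let $A_n$ denote the associated symmetric matrix, so that the spectrum of $A_n$ is $(\lambda_{n,k})_k$. A direct computation gives
\begin{equation*}
  \Esp*{F_n^2} = 2\tr(A_n^2), \qquad \Esp*{\Gamma[F_n,F_n]} = 4 \tr(A_n^2), \qquad \Var*{\Gamma[F_n,F_n]} = 32 \tr(A_n^4).
\end{equation*}

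Next, by \cref{th:fourth-moment-stein}, the assumption $F_n \xrightarrow{\text{law}} \mathcal{N}(0,1)$ is equivalent to $\Gamma[F_n,F_n] \to 2$ in $L^2$. This simultaneously forces $\tr(A_n^2) \to 1/2$ and $\tr(A_n^4) \to 0$. Since $\rho(A_n)^4 \leq \tr(A_n^4)$, we obtain $\rho(A_n) \to 0$. This is the one substantive step; everything else is bookkeeping.

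Then I would normalize. Set $B_n \coloneq A_n / \sqrt{\tr(A_n^2)}$, so that $\tr(B_n^2) = 1$ and $\rho(B_n) \to 0$. For $n$ large enough that $\rho(B_n) \leq 1/(2q+1)$, \cref{th:spectral-remainder-radius} applied to $B_n$ yields
\begin{equation*}
  \mathcal{R}_{2q+1}(B_n) \geq \prod_{k=1}^{2q}\paren*{1 - k\rho(B_n)} \xrightarrow[n\to\infty]{} 1.
\end{equation*}
Using the scaling $\mathcal{R}_{2q+1}(A_n) = (\tr A_n^2)^{2q+1} \mathcal{R}_{2q+1}(B_n)$ and $\tr(A_n^2) \to 1/2$, we conclude that $\mathcal{R}_{2q+1}(A_n)$ is bounded below by a strictly positive constant for $n$ large.

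Finally, \cref{prop:estiRq} gives $\Esp*{\Gamma[F_n,F_n]^{-q}} \leq C \mathcal{R}_{2q+1}(A_n)^{-1/2}$, which is therefore uniformly bounded for $n$ large. The only place where any real content enters is the spectral radius bound; the rest is the comparison between $\mathcal{R}_{2q+1}$, $\tr(A^2)$, and $\rho(A)$ already encoded in the preparatory lemmas.
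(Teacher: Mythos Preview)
Your proof is correct and follows essentially the same route as the paper: reduce via \cref{prop:estiRq} to a lower bound on $\mathcal{R}_{2q+1}(A_n)$, deduce $\rho(A_n)\to 0$ from the convergence of second and fourth moment data, and conclude with \cref{th:spectral-remainder-radius}. The only cosmetic differences are that the paper invokes the fourth moment condition $\Esp{F_n^4}\to 3$ directly rather than going through \cref{th:fourth-moment-stein}, and that you handle the normalization $\tr(B_n^2)=1$ required by \cref{th:spectral-remainder-radius} more explicitly than the paper does.
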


\begin{proof}
  We assume can that $q \ne 0$, otherwise the claim is trivial.
  As explained in \cref{sec:finitely-generated}, it is sufficient to prove our claims on $\mathcal{W}_{2}^{(0)}$.
  By density, without loss of generality we assume that $(F_n) \subset \mathcal{W}_2^{(0)}$.
We denote by $A_n$ the associated matrix of size $k_{n} \times k_{n}$, and $(\lambda_{i,n})$ its spectrum.
By \cref{th:main-degree-2}, it is sufficient to bound from below the quantities $\mathcal{R}_q(A_n)$.
By assumption, we have
\begin{equation*}
  \begin{cases}
    & \Esp*{F_n^2} \xrightarrow[n \to \infty]{} 1,
  \\& \Esp*{F_n^4} \xrightarrow[n \to \infty]{} 3;
  \end{cases}
  \quad \Longleftrightarrow \quad
  \begin{cases}
    & \sum_{i=1}^{k_n}\lambda_{i,n}^2 \xrightarrow[n \to \infty]{} \frac{1}{2},
  \\&  \sum_{i=1}^{k_n}\lambda_{i,n}^4 \xrightarrow[n \to \infty]{} 0.
  \end{cases}
\end{equation*}
This implies that $\rho(A_{n}) \to 0$.
Since, by \cref{th:spectral-remainder-radius}, $\mathcal{R}_q(A_n)\ge \prod_{k=1}^{q-1}(1-k\rho(A_n))$, we deduce that $\mathcal{R}_q(A_n)$ is bounded by below for $n$ large, and we conclude. 
\end{proof}

\section{Sharp operator}\label{s:sharp}
In this section, we establish estimates regarding Malliavin derivatives, when we specifically choose to take derivatives in the directions of a Gaussian space.
In this case, the Malliavin derivative is an element of an enlarged Wiener space.
Thus, we can use Gaussian analysis to conclude.
Through these estimates, we obtain that negative moments of $\Gamma[F,F]$ are estimated by the spectral remainders of the Malliavin Hessian of $\mathsf{D}^{2}F$.
The estimates of this section are akin to our results from \cite{HMPreg} obtained in a more general setting.
For the sake of completeness, we present self-contained arguments tailor-made to the case of normal convergence on Wiener chaoses.
For simplicity, we state our results in finite sums of Wiener chaos, they still hold for more general random variables by density arguments, similar to that of \cref{sec:finitely-generated}.

\subsection{Iterated sharp operators}
The \emph{sharp operator}, introduced by Bouleau \cite{bouleau2003error} with a slightly different definition, is a convenient way to interpret the Malliavin derivative.
For a standard Gaussian vector $\vec{N}=(N_1,\cdots,N_{K})$ and  a polynomial mapping $F\in\mathbb{R}\left[N_1,N_2,\cdots,N_{K}\right]$ the \emph{Bouleau derivative} of $F$ is
\begin{equation}\label{sharp}
  \sharp[F] \coloneq \sum_{i=1}^{K} \frac{\partial F}{\partial N_i}(\vec{N}) G_i,
\end{equation}
where $\vec{G}=(G_1,\cdots,G_{K})$ is a Gaussian vector independent of $\vec{N}$.
This operator intimately relates to the square field operator through the Laplace-Fourier identity
 \begin{equation*}
   \Esp*{\mathrm{e}^{\mathrm{i} t \sharp\left[F\right]}}=\Esp*{\exp\left(-\frac{t^2}{2}\Gamma\left[F,F\right]\right)}.
 \end{equation*}
 Our work exploits other connections between $F$, $\Gamma[F,F]$, and $\sharp[F]$, and they will become apparent to the reader in the rest of this work.
 Intuitively, the random variable $\sharp[F]$ is simpler than $F$, in view of the independence between the terms $G_i$ and $\frac{\partial F}{\partial N_i}(\vec{N})$.

We generalize the definition to cover iterated Malliavin derivatives.
We fix $(G_{i,j})$ a sequence of independent standard Gaussian variables, independent of $\vec{N}$.
For a polynomial function $F \in \mathbb{R}[N_1,\ldots,N_{K}]$, we let
\begin{equation*}
  \sharp^k[F] \coloneq \sum_{1\le i_1,\ldots,i_k\leq K}\frac{\partial^k F}{\partial N_{i_1}\cdots \partial N_{i_k}}(\vec{N}) G_{1,i_1}\cdots G_{k,i_k}, \qquad k = 1,2, \dots.
\end{equation*}
  When $k=1$, the definition of $\sharp^{k}$ is consistent with that of $\sharp$.
  When $k = 0$, the above formula is understood as $\sharp^{0}[F] = F$.
  By density, the operators $\sharp^{k}$ extends to $\mathcal{W}_{m}$ (to see this, observe that $\sharp^{k}$ is simply the iterated Malliavin derivative when we choose $\mathfrak{H}$ to be a Gaussian space, and conclude by \cite[Prop.\ 2.3.4]{NourdinPeccati}).
We regard $\sharp^k[F]$ as an element of an enlarged Wiener space, in the sense of \cref{def:enlarged-wiener-space}, generated by the variables $(N_k)$ and $(G_{i,j})$.

These operators satisfy the following elementary properties.
\begin{enumerate}
\item $F\in \mathcal{W}_m\Rightarrow \sharp^k[F]\in \mathcal{W}_m$ and $F\in \mathcal{W}_{\leq m}\Rightarrow \sharp^k[F]\in \mathcal{W}_{\leq m}$,
\item $F\in \mathcal{\mathcal{W}}_m\Rightarrow \Var*{\sharp^k[F]}=m(m-1)\cdots (m-k+1)) \Var*{F}$.
\end{enumerate}
The first point follows immediately, since for $F\in \mathcal{W}_{m}$, the partial derivatives with respect to the $N_{i}$'s are in $\mathcal{W}_{m-1}$, and in view of the independence of $\vec{N}$ and $\vec{G}$.
For the second point, we use a consequence of the integration by part formula, which gives for $F\in\mathcal{W}_m$, see for instance \cite[Prop.\ 1.2.2]{nualart2006malliavin}, that
\begin{equation*}
  \Esp*{ F^2 } = \frac{1}{m} \sum_{i=1}^{K} \Esp*{\left(\frac{\partial F}{\partial N_i}\right)^2} = \frac{1}{m} \Esp*{ \sharp[F]^{2} }.
\end{equation*}
Iterating this formula gives the second point.

\subsection{Negative moments estimates}\label{s:negative-moments-estimates}
Estimates on the negative moments of $\Gamma\left[\sharp^k[F],\sharp^k[F]\right]$ yield estimates on those of $\Gamma[F,F]$.
\begin{proposition}\label{supersharpnegmoments}
  Let $m$ and $k \in \mathbb{N}^{*}$ with $k\le m$.
  For all $q \in \mathbb{N}$, there exists $q' \in \mathbb{N}$ and $C > 0$ such that for every $F\in\mathcal{W}_{\leq m}$ with $\Esp*{F^{2}} =1$,
\begin{equation*}
  \Esp*{\Gamma\left[F,F\right]^{-q}}\le C \Esp*{\Gamma\left[\sharp^k[F],\sharp^k[F]\right]^{-q'}}.
\end{equation*}
\end{proposition}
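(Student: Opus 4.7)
The plan is to induct on $k$, with the base case $k=0$ trivial (take $q'=q$). For the inductive step, since $\sharp^{k}[F] = \sharp[\sharp^{k-1}[F]]$ on a further enlargement of the Wiener space, and since $H \coloneq \sharp^{k-1}[F]$ belongs to $\mathcal{W}_{\leq m}$ on the enlarged space with $\Esp*{H^{2}}$ bounded by a constant depending only on $(m,k)$ through the variance formula for iterated sharps, it is enough to establish the case $k=1$. The stated inequality is not scale-invariant, but rescaling $H$ to have unit variance only produces a harmless multiplicative factor controlled by the second moment of $H$, hence absorbable in the final constant provided the negative-moment exponent on the right side is chosen $\geq q$.

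For the case $k=1$, I would first exploit the conditional Laplace identity
\begin{equation*}
  \phi(t) \coloneq \Esp*{\mathrm{e}^{\mathrm{i} t \sharp[F]}} = \Esp*{\mathrm{e}^{-t^{2} \Gamma[F,F]/2}},
\end{equation*}
obtained by integrating out the independent copies $(G_i)$ given $\vec{N}$. Combining with the representation $x^{-q} = c_q \int_0^\infty s^{q-1} \mathrm{e}^{-sx} \mathrm{d}s$, the change of variable $s = t^{2}/2$, and the fact that $\phi$ is real and even (by symmetry of the $(G_i)$), I would obtain
\begin{equation*}
  \Esp*{\Gamma[F,F]^{-q}} = C_q \int_{\mathbb{R}} \abs{t}^{2q-1} \phi(t) \, \mathrm{d}t.
\end{equation*}

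The heart of the argument is to bound $\abs{\phi(t)}$ by means of Malliavin's lemma. Since $\sharp[F] \in \mathcal{W}_{\leq m}$ on the enlarged Wiener space and $\Esp*{\sharp[F]^{2}}$ is controlled by $\Esp*{F^{2}} = 1$ via the variance formula, I would apply \cref{IPP-classic} with $d=1$ to a normalization of $\sharp[F]$: for every integer $n$ there exist $q' = q'(n,m) \in \mathbb{N}$ and $C > 0$ such that the density $f$ of $\sharp[F]$ satisfies $\norm{f}_{W^{n,1}(\mathbb{R})} \leq C \, \Esp*{\Gamma[\sharp[F],\sharp[F]]^{-q'}}$. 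The standard Fourier estimate $\abs{\phi(t)} \leq \norm{f^{(n)}}_{L^{1}} \abs{t}^{-n}$ then yields polynomial decay. Choosing $n = 2q+2$ renders the integral absolutely convergent: splitting at $\abs{t}=1$, the tail piece is bounded by $C \, \Esp*{\Gamma[\sharp[F],\sharp[F]]^{-q'}} \int_{\abs{t} > 1} \abs{t}^{-3} \mathrm{d}t$, while the near-zero piece is bounded by $\int_{\abs{t} \leq 1} \abs{t}^{2q-1} \mathrm{d}t$ using $\abs{\phi} \leq 1$; the Jensen lower bound $\Esp*{\Gamma[\sharp[F],\sharp[F]]^{-q'}} \geq C(m,q) > 0$, which follows from the $L^2$-boundedness of $\sharp[F]$, then converts this additive estimate into the required multiplicative form.

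The main obstacle is keeping all constants uniform in $F$. This is achieved through hypercontractivity on $\mathcal{W}_{\leq m}$, which controls all $L^{p}$-norms of $\sharp[F]$ and its iterated Malliavin derivatives in terms of the $L^{2}$-norm, combined with the standing normalization $\Esp*{F^{2}}=1$. A secondary bookkeeping subtlety appears in the inductive step: the second moment of $\sharp^{k-1}[F]$ is some constant $c(m,k)$ rather than one, so a rescaling contributes the factor $c(m,k)^{q''-q}$, harmless once one ensures the negative-moment exponent produced by each induction step dominates the previous one.
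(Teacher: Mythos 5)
Your overall architecture for the one‑step estimate — the conditional Laplace–Fourier identity, Malliavin's lemma (\cref{IPP-classic}) applied to $\sharp[F]$ to get Sobolev control of its density, Hörmander's Fourier decay bound, and the integral representation of $x^{-q}$ — is exactly the paper's. The Jensen lower bound you use to turn the additive split at $\abs{t}=1$ into a multiplicative estimate is a legitimate variant of the paper's handling of the same integral, and your comments on hypercontractivity and the rescaling factors are all reasonable.

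There is however a genuine gap in the reduction to $k=1$. The identity ``$\sharp^{k}[F] = \sharp\bigl[\sharp^{k-1}[F]\bigr]$'' needs more care than you give it. By definition (see \cref{supersharp}), $\sharp^{k}$ differentiates \emph{only} with respect to the $N$-coordinates. If the outer $\sharp$ is interpreted as the full Bouleau derivative of $H \coloneq \sharp^{k-1}[F]$ on the enlarged Wiener space — differentiating over $N$ \emph{and} over all the auxiliary variables $G_{j,l}$ on which $H$ depends — then the identity is simply false: $\sharp[H]$ carries extra terms coming from the $G$-derivatives, and it is not $\sharp^{k}[F]$. If, instead, you mean the partial derivative over the $N$-variables only (which does restore the identity), then the conditional Laplace computation you invoke gives
\begin{equation*}
  \Esp*{\mathrm{e}^{\mathrm{i} t \,\sharp^{k}[F]}} = \Esp*{\mathrm{e}^{-\tfrac{t^{2}}{2}\Gamma_{N}[H,H]}},
\end{equation*}
with $\Gamma_{N}$ the \emph{partial} carré du champ in the $N$-directions, not the full $\Gamma[H,H]$ appearing in your stated $k=1$ inequality. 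Thus the integral representation produces a bound on $\Esp*{\Gamma_{N}[H,H]^{-q}}$ rather than on $\Esp*{\Gamma[H,H]^{-q}}$. The paper's proof closes this gap by an explicit appeal to the decomposition $\Gamma[H,H] = \Gamma_{N}[H,H] + \Gamma_{G}[H,H] \ge \Gamma_{N}[H,H]$, so that negative moments of $\Gamma_{N}$ dominate those of the full $\Gamma$. This comparison inequality is the missing ingredient in your write‑up: without it, your inductive step bounds the wrong quantity, and the chain from $k-1$ to $k$ does not close. Once you make the distinction between $\Gamma$ and $\Gamma_{N}$ explicit and invoke $\Gamma\ge\Gamma_{N}$ at each step, the rest of your argument is sound and coincides with the paper's.
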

\begin{proof}
Let $F\in\mathcal{W}_{\leq m}$.
Write $F_{k}=\sharp^k[F]$ for $k=0,\ldots, m$.
For $k\ge 1$, the following induction relation holds
\begin{equation*}
  F_k=\sum_{i=1}^{K} \frac{\partial F_{k-1}}{\partial N_i} G_{i,k}.
\end{equation*}
This shows that $F_k$ has the same law as $V_k^{\frac{1}{2}} N$, where
\begin{equation*}
  V_k = \sum_{i=1}^{K} \left(\frac{\partial F_{k-1}}{\partial N_i}\right)^2=\Gamma_N[F_{k-1},F_{k-1}],
\end{equation*}
 and $N\sim\mathcal{N}(0,1)$ is independent of $V_k$.
 In particular, we have the Fourier-Laplace identity
 \begin{equation}\label{eq:fourier-laplace}
   \Esp*{\mathrm{e}^{\mathrm{i} t F_k}} = \Esp*{ \mathrm{e}^{-\frac{t^2}{2}\Gamma_N\left[F_{k-1},F_{k-1}\right]} }.
 \end{equation} 
 Fix $m$, $q$, and $k$ as in the theorem.
 In Malliavin's result (\cref{th:malliavin-lemma}) take the $q' \in \mathbb{N}$ associated with $2q+1$.
 If $\Esp*{ \Gamma[ F_{k}, F_{k}]^{-q'} } = \infty$, then the statement is empty and the proof is complete.
 Thus, we assume that $\Esp*{ \Gamma[F_{k},F_{k}]^{-q'} } < \infty$.
 By \cref{th:malliavin-lemma}, the density $f_k$ of $F_k$ belongs to $W^{2q+1,1}(\mathbb{R})$, and 
\begin{equation*}
  \norm{f_k}_{W^{2q+1,1}(\mathbb{R})}\leq C\Esp*{ \Gamma[F_{k},F_{k}]^{-q'} }.
\end{equation*}
\cref{eq:sobolev-embeddings-fourier} gives the bound on the Fourier transform:
\begin{equation*}
  \norm{f_{k}}_{N^{j}} |\leq  \norm{f_k}_{W^{2q+1,1}(\mathbb{R})}, \qquad j\leq 2q+1.
\end{equation*}
Therefore, up to changing the constant $C$,
\begin{equation*}
  \left|t^{2 q-1} \Esp*{\mathrm{e}^{\mathrm{i} t F_k}}\right|\le \frac{C}{t^2+1}\Esp*{\Gamma[F_k,F_k]^{-q'}}, \qquad t \in \mathbb{R}.
\end{equation*}
Reporting in the Fourier-Laplace identity \cref{eq:fourier-laplace} yields
\begin{equation*}
  |t^{2 q-1}| \Esp*{ \exp\left(-\frac{t^2}{2}\Gamma_N\left[F_{k-1},F_{k-1}\right]\right) }\leq  \frac{C}{t^2+1}\Esp*{ \Gamma[F_k,F_k]^{-q'} }, \qquad t \in \mathbb{R}.
\end{equation*}
Using \cref{eq:x-minus-q-integral}, we find that
\begin{equation*}
  \Esp*{ \Gamma_N\left[F_{k-1},F_{k-1}\right]^{-q} } = c_{q} \int_{0}^{\infty}\abs{t}^{2q-1} \Esp*{ \mathrm{e}^{-\frac{t^{2}}{2} \Gamma_{N}[ F_{k}, F_{k} ] } } \mathrm{d} t \leq C\Esp*{ \Gamma[F_k,F_k]^{-q'} },
\end{equation*}
where $\Gamma_{N}$ is defined in \cref{def:gamma-partial-N}.
Since
\begin{equation*}
  \Gamma[F_{k-1},F_{k-1}]=\Gamma_N[F_{k-1},F_{k-1}]+\Gamma_G[F_{k-1},F_{k-1}]\ge \Gamma_N[F_{k-1},F_{k-1}],
\end{equation*}
we deduce that
\begin{equation*}
  \Esp*{ \Gamma\left[F_{k-1},F_{k-1}\right]^{-q} }\leq C\Esp*{ \Gamma[F_k,F_k]^{-q'} }.
\end{equation*}
The statement follows by an immediate induction.
\end{proof}

Combining \cref{supersharpnegmoments} for $k=2$ with \cref{th:main-degree-2}, we obtain the estimate for negative moments of $\Gamma[F,F]$ in terms of spectral remainders for $\nabla^{2}F$, the Hessian matrix of $F$, that is
\begin{equation*}
  \nabla^{2}F \coloneq \left(\frac{\partial^2F}{\partial N_i\partial N_j}\right)_{1 \leq i,j \leq K}, \qquad F \in \mathbb{R}[N_{1}, \dots, N_{K}].
\end{equation*}

\begin{proposition}\label{th:bound-gamma-spectral-hessian}
  Let $m \in \mathcal{N}_{>0}$ and $q \in \mathbb{N}$, there exist $q' \in \mathbb{N}$ and $C > 0$ such that for every $F\in \mathbb{R}[N_1,\ldots,N_{K}]$ of degree $m$, with $\Var*{F} =1$, 
\begin{equation*}
  \Esp*{ \Gamma[F,F]^{-q} }\leq C\Esp*{ \mathcal{R}_{q'}\left(\nabla^{2}F\right)^{-\frac{1}{2}} }.
\end{equation*}
\end{proposition}

\begin{proof}
  Fix $m \in \mathbb{N}^{*}$ and $q \in \mathbb{N}$.
Let us denote by $(A_{i,j})$ the Hessian matrix of $F$, and let $\widetilde{F} \coloneq \sharp^2[F]$.
Then,
\begin{equation*}
  \widetilde{F}=\sum_{i,j\ge 1} A_{i,j}G_{i,1}G_{j,2}.
\end{equation*}
  By \cref{supersharpnegmoments}, there exist $q'' \in \mathbb{N}$ and $C > 0$ such that
\begin{equation*}
  \Esp*{ \Gamma[F,F]^{-q} }\leq C\Esp*{ \Gamma[\widetilde{F},\widetilde{F}]^{-q''} }.
\end{equation*}
Thus, it is sufficient to bound from above the right-hand side.
Since $A$ and $\vec{G}$ are independent, fixing a realization of the entries $A_{i,j}$, $\widetilde{F}$ is a function of the variables $G_{i,1}$ and $G_{i,2}$, and can be seen as an element of the second Wiener chaos, with associated Hessian matrix
\begin{equation*}
\widetilde{A}=
\begin{pmatrix}
0 & A\\
A & 0
\end{pmatrix}.
\end{equation*}
The characteristic polynomial of $\widetilde{A}$ is $t \mapsto \chi_{A}(t)\chi_{A}(-t)$ where $\chi_{A}$ stands for the characteristic polynomial of $A$.
Hence,
\begin{equation*}
  \mathrm{spec}(\widetilde{A}) = \set*{ \lambda, - \lambda : \lambda \in \mathrm{spec}(A) }.
\end{equation*}
This yields that $\mathcal{R}_{p}(A) \leq \mathcal{R}_{p}(\widetilde{A})$ ($p \in \mathbb{N}^{*}$).
Applying \cref{th:main-degree-2} gives $q' \coloneq 2q''+1$ and $C > 0$ such that
\begin{equation*}
    \esp_G[\Gamma_G[\widetilde{F},\widetilde{F}]^{-q''}] \leq C\mathcal{R}_{q'}(\widetilde{A})^{-\frac{1}{2}} \leq C\mathcal{R}_{q'}(A)^{-\frac{1}{2}},
\end{equation*}
where $\esp_G$ (resp. $\Gamma_G$) means that we only integrate (resp. derivate) with respect to the variables $G_{i,j}$. Using that $\Gamma[\widetilde{F},\widetilde{F}]\ge\Gamma_G[\widetilde{F},\widetilde{F}]$,and integrating with respect to $N$ we get 
\begin{equation*}
  \Esp*{ \Gamma[\widetilde{F},\widetilde{F}]^{-q''} }\leq C\Esp*{ \mathcal{R}_{q'}(A)^{-\frac{1}{2}} }.
\end{equation*}
This concludes the proof.
\end{proof}

\section{Proof of the main theorems}\label{s:proofs}
\subsection{Setup}
In this section we prove \cref{th:main-negative-moments}.
We proceed by induction on the degree $m$ of the chaos.
Let us define the property to be established.
\begin{equation}\tag{$\mathcal{P}(m)$}
  \begin{split}
    &\text{For every sequence}\ (F_n)_{n\ge 1} \subset \mathcal{W}_m,
  \\& \bracket*{F_n\xrightarrow[n\to\infty]{\text{Law}}~\mathcal{N}(0,1)} \Rightarrow
  \bracket*{\limsup_{n\to +\infty} \Esp*{\Gamma[F_n,F_n]^{-q}}<+\infty, \qquad q \in \mathbb{N}}.
\end{split}
\end{equation}
This property is equivalent to the following non sequential version.
\begin{equation*}\tag{$\mathcal{P}(m)$}
  \begin{split}
    & \forall q \in \mathbb{N},\ \exists \delta = \delta_{q} > 0,\, \exists C = C_{q} > 0 :
  \\& \forall F \in \mathcal{W}_{m},\, \bracket*{d_{\mathrm{FM}}(F,\mathcal{N}(0,1))\le \delta} \Rightarrow \bracket*{\Esp*{\Gamma[F,F]^{-q}}\le C}.
  \end{split}
\end{equation*}
\cref{secondchaos} establishes $\mathcal{P}(2)$.
Let us prove that for every $m \ge 3$, $\mathcal{P}(m-1) \Rightarrow \mathcal{P}(m)$.

We often use that controls on negative moments in $\mathcal{P}(m)$, are expressible in terms of small ball estimates.
More precisely, for every sequence of random variables $(X_n)$, we recall the following elementary equivalence:
\begin{enumerate}[wide]
  \item For every $q\ge 0$, there exists $N\in \mathbb{N}$ such that $\sup_{n\ge N} \Esp*{ |X_n|^{-q} }<+\infty$.
  \item For every $q\ge 0$, there exist $N\in \mathbb{N}$ and $C>0$ such that
    \begin{equation*}
      \forall\epsilon>0, \sup_{n \geq N} \Prob*{ |X_n|\le \epsilon }\le C\epsilon^q.
    \end{equation*}
\end{enumerate}

\subsection{The discretization procedure}
Through a discretization procedure, we obtain that $\mathcal{P}(m)$ is equivalent to the following vectorial version.
For $d \in \mathbb{N}^{*}$, we consider
\begin{equation*}\tag{$\mathcal{P}_d(m)$}
  \begin{split}
  & \text{For every sequence}\ (\vec{F}_n)_{n\ge 1} \subset \mathcal{W}_{\leq m}^d,
\\& \bracket*{\vec{F}_n\xrightarrow[n\to\infty]{\text{Law}}~\mathcal{N}(0,I_d)} \Rightarrow
    \bracket*{\limsup_{n\to +\infty}\Esp*{\det\Gamma (\vec{F}_n)^{-q} }<+\infty, \qquad q \in \mathbb{N}}.
  \end{split}
\end{equation*}
As above, it is equivalent to the non sequential version.
\begin{equation*}\tag{$\mathcal{P}_d(m)$}
  \begin{split}
    & \forall q \in \mathbb{N},\, \exists \delta=\delta_q>0,\, \exists C=C_q>0 : 
  \\& \forall \vec{F} \in \mathcal{W}_m^{d},\,
\bracket*{d_{\mathrm{FM}}(\vec{F},\mathcal{N}(0,I_{d}))\le \delta} \Rightarrow \bracket*{\Esp*{\det(\Gamma(\vec{F}))^{-q}}\le C}.
  \end{split}
\end{equation*}
In this section, we prove the implication $\mathcal{P}(m)\Rightarrow \bracket*{\forall d \in \mathbb{N}^{*},\, \mathcal{P}_d(m)}$ via a more general statement.

\begin{proposition}\label{prop:dimupgrade}
  Let $d$ and $m \in \mathbb{N}^{*}$.
  Consider a sequence $(\vec{F}_n) \subset \mathcal{W}_{\leq m}^{d}$ that is also $L^2$-bounded sequence.
  Then there is equivalence between the two following properties:
  \begin{enumerate}[(i),wide]
    \item\label{prop:dimupgrade:sphere} for every sequence $(\vec{a}_n)$ in the sphere $\mathbb{S}^{d-1}$, and all $q > 0$
    \begin{equation*}
      \limsup_{n\to +\infty} \Esp*{ \Gamma[\vec{F}_n\cdot \vec{a_n},\vec{F}_n\cdot \vec{a_n}]^{-q} }<+\infty.
    \end{equation*}
    \item\label{prop:dimupgrade:determinant} for all $q > 0$, $\displaystyle \limsup_{n\to +\infty} \Esp*{ \det\Gamma (\vec{F}_n)^{-q}  }<+\infty.$
\end{enumerate}
\end{proposition}

\begin{corollary}\label{cor:dimupgrade}
  For any $m\ge 2$, if $\mathcal{P}(m)$ holds then $\mathcal{P}_d(m)$ also holds for every $d \in \mathbb{N}^{*}$.
\end{corollary}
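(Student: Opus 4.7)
The plan is to apply \cref{prop:dimupgrade}, reducing the vectorial bound in $\mathcal{P}_d(m)$ to scalar bounds on linear projections $\vec{F}_n \cdot \vec{a}_n$, each of which lives in $\mathcal{W}_m$ and so falls under the scope of $\mathcal{P}(m)$. Let $(\vec{F}_n) \subset \mathcal{W}_m^d$ converge in law to $\mathcal{N}(0,I_d)$. The sequence $(\vec{F}_n)$ is automatically bounded in $L^2$: each coordinate lies in the fixed-degree chaos $\mathcal{W}_m$, where all $L^p$-norms are equivalent by hypercontractivity, and convergence in law forces the second moments to converge to $1$. Hence the hypotheses of \cref{prop:dimupgrade} are satisfied, and it suffices to prove that, for every sequence $(\vec{a}_n) \subset \mathbb{S}^{d-1}$ and every $q \in \mathbb{N}$,
\[
\limsup_{n\to\infty} \Esp*{\Gamma[\vec{F}_n \cdot \vec{a}_n, \vec{F}_n \cdot \vec{a}_n]^{-q}} < \infty.
\]

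Set $G_n \coloneq \vec{F}_n \cdot \vec{a}_n \in \mathcal{W}_m$. I argue by contradiction: suppose the limsup above were infinite. Then, extracting a subsequence along which $\Esp*{\Gamma[G_n,G_n]^{-q}} \to \infty$, and further extracting, by compactness of $\mathbb{S}^{d-1}$, a sub-subsequence (still relabelled $n$) with $\vec{a}_n \to \vec{a} \in \mathbb{S}^{d-1}$, I decompose
\[
G_n = \vec{F}_n \cdot \vec{a} + \vec{F}_n \cdot (\vec{a}_n - \vec{a}).
\]
The first term converges in law to $\vec{N}\cdot \vec{a} \sim \mathcal{N}(0,1)$ by the continuous mapping theorem, while the second is bounded in absolute value by $\norm{\vec{F}_n}\,\norm{\vec{a}_n - \vec{a}}$ and therefore tends to zero in probability by Cauchy--Schwarz and the $L^2$-boundedness of $(\vec{F}_n)$. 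Slutsky's lemma yields $G_n \to \mathcal{N}(0,1)$ in law.

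Now $\mathcal{P}(m)$ applied to $(G_n) \subset \mathcal{W}_m$ produces $\limsup \Esp*{\Gamma[G_n,G_n]^{-q}} < \infty$, contradicting the extraction. Thus condition \cref{prop:dimupgrade:sphere} holds for $(\vec{F}_n)$, and \cref{prop:dimupgrade} delivers $\mathcal{P}_d(m)$. The only delicate point is the Slutsky step above, which is settled via $L^2$-boundedness; otherwise the argument is a clean compactness-and-contradiction. All substantive content has been absorbed into \cref{prop:dimupgrade} and the scalar property $\mathcal{P}(m)$, so I do not anticipate any further obstacle here.
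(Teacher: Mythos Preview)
Your proof is correct and follows exactly the route the paper intends: the corollary is stated without proof immediately after \cref{prop:dimupgrade}, and the implied argument is precisely to verify condition~\cref{prop:dimupgrade:sphere} for a sequence $(\vec{F}_n)\subset\mathcal{W}_m^d$ converging to $\mathcal{N}(0,I_d)$, then invoke $\mathcal{P}(m)$ on each scalar projection. Your compactness-and-Slutsky verification of \cref{prop:dimupgrade:sphere} is sound.

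One minor remark: the paper's toolbox allows a slightly more direct verification of \cref{prop:dimupgrade:sphere}, bypassing the contradiction and subsequence extraction. By \cref{th:fourth-moment-stein}, $\Gamma(\vec{F}_n)\to mI_d$ in $L^2$, hence for every $\vec{a}\in\mathbb{S}^{d-1}$,
\[
\norm*{\Gamma[\vec{F}_n\cdot\vec{a},\vec{F}_n\cdot\vec{a}]-m}_{L^2}
=\norm*{{}^t\vec{a}\,(\Gamma(\vec{F}_n)-mI_d)\,\vec{a}}_{L^2}
\le \norm*{\Gamma(\vec{F}_n)-mI_d}_{L^2}\to 0,
\]
uniformly in $\vec{a}$. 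The quantitative half of \cref{th:fourth-moment-stein} then gives $d_{\mathrm{FM}}(\vec{F}_n\cdot\vec{a}_n,\mathcal{N}(0,1))\to 0$ uniformly in $(\vec{a}_n)$, and the non-sequential form of $\mathcal{P}(m)$ yields the required bound directly. This is the spirit in which the paper likely regards the corollary as immediate; your compactness argument reaches the same conclusion by a slightly longer path.
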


The proof of the proposition relies on a discretization procedure of the sphere.
Such procedure is frequently used in Malliavin calculus, for instance \cite[Lem.\ 4.7]{hairer2011malliavin}.
We use the following discretization result for the $d-1$-dimensional Euclidean sphere $\mathbb{S}^{d-1}$.

\begin{lemma}\label{discretization}
  For all $d \in \mathbb{N} \setminus \{1\}$ and $N \in \mathbb{N}$, there exist $C_d>0$ (not depending on $N$) and $\mathbb{S}^{d-1,N}\subset \mathbb{S}^{d-1}$ such that $\mathrm{Card}\left(\mathbb{S}^{d-1,N}\right)\le C_d N^d$ and
\begin{equation*}
  \forall a \in \mathbb{S}^{d-1},\,\exists b\in\mathbb{S}^{d-1,N},\,\text{such that}\,\|a-b\|\le \frac{C_d}{N}.
\end{equation*}
\end{lemma}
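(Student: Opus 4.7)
The statement is a standard $\epsilon$-net construction on the sphere; the plan is to discretize a thick shell around $\mathbb{S}^{d-1}$ by a cubic grid of mesh size $\sim 1/N$ and then to radially project the grid points onto $\mathbb{S}^{d-1}$.

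More precisely, consider the lattice
\begin{equation*}
G_{N} \coloneq \tfrac{1}{N}\mathbb{Z}^{d} \cap [-2,2]^{d}.
\end{equation*}
By construction, $\mathrm{Card}(G_{N}) \le (4N+1)^{d} \le C_{d} N^{d}$. Moreover, since the closed cubes of side $1/N$ centered at points of $\tfrac{1}{N}\mathbb{Z}^{d}$ cover $\mathbb{R}^{d}$, for every $a \in \mathbb{S}^{d-1} \subset [-1,1]^{d}$ there exists $x = x(a) \in G_{N}$ with
\begin{equation*}
\norm{x-a}_{\infty} \le \tfrac{1}{2N}, \qquad \text{hence} \qquad \norm{x-a} \le \tfrac{\sqrt{d}}{2N}.
\end{equation*}

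Next, I would define the candidate net as the radial projection of $G_{N} \setminus \set{0}$ onto the sphere, namely
\begin{equation*}
\mathbb{S}^{d-1,N} \coloneq \set*{x/\norm{x} : x \in G_{N} \setminus \set{0}},
\end{equation*}
so that $\mathrm{Card}(\mathbb{S}^{d-1,N}) \le C_{d} N^{d}$. For $N$ large enough (say $N \ge \sqrt{d}$), the inequality above gives $\norm{x} \ge 1 - \tfrac{\sqrt{d}}{2N} \ge 1/2$. Setting $b \coloneq x/\norm{x} \in \mathbb{S}^{d-1,N}$ and using $\norm{a}=1$,
\begin{equation*}
\norm{b - a} \le \norm*{b - x} + \norm{x-a} = \abs{1 - \norm{x}} + \norm{x-a} \le 2\norm{x-a} \le \frac{\sqrt{d}}{N}.
\end{equation*}
This yields the covering property with constant $\sqrt{d}$. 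The remaining small values of $N$ can be absorbed by enlarging $C_{d}$, since for each such $N$ one may take any fixed finite $\tfrac{1}{N}$-net of $\mathbb{S}^{d-1}$.

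There is no serious obstacle here: the only mild point to check is that the radial projection does not spoil either the cardinality bound (which is obvious) or the distance estimate (handled by the observation $\abs{1-\norm{x}}=\abs{\norm{a}-\norm{x}} \le \norm{x-a}$, so radial projection at most doubles the distance). The same argument, with minor modifications, can be found in standard references such as \cite[Lem.\ 4.7]{hairer2011malliavin}.
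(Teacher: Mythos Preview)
Your proof is correct and follows essentially the same approach as the paper: take a cubic lattice of mesh $1/N$ in a box containing the sphere, radially project onto $\mathbb{S}^{d-1}$, and use the elementary estimate $\abs{1-\norm{x}}\le\norm{x-a}$ to control the distortion of the projection. The only cosmetic differences are that you work in $[-2,2]^d$ (giving a slightly sharper constant $\sqrt{d}$ instead of the paper's $4\sqrt{d}$) and that you explicitly absorb the finitely many small values of $N$ into $C_d$, which the paper leaves implicit.
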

\begin{proof}
  We fix an positive integer $N$.
  Write $\lBrack-N, N \rBrack \coloneq \{-N, -N+1, \dots, N-1, N\}$.
  For every $I=(i_1,i_2,\cdots,i_d)\in\lBrack-N,N\rBrack^d$, we set $b_I \coloneq \frac{I}{N}$.
For every $a\in\mathbb{S}^{d-1}$, we may find $I\in\lBrack-N,N\rBrack^d$ such that all the coordinates of $b_I-a$ are $\le \frac{1}{N}$.
Hence,
\begin{equation*}
  |\|b_I\|-1|\le \|b_I-a\| \le \frac{\sqrt{d}}{N}.
\end{equation*}
Whenever $N\ge 2\sqrt{d}$, the above choice of $b_{I}$ yields $\|b_I\|\ge \frac{1}{2}$.
Thus, setting $a_I \coloneq \frac{b_I}{\|b_I\|}$, we get
\begin{equation*}
  \|a_I-a\| \le 2\left\|b_I-a\|b_I\|\right\|\le 2 \|b_I-a\|+2|1-\|b_I\||\le \frac{4\sqrt{d}}{N}. 
\end{equation*}
We define
\begin{equation*}
  \mathbb{S}^{d-1,N} \coloneq \set*{ \frac{b_I}{\|b_I\|}=a_I : I\in\lBrack-N,N\rBrack^d/\{0,\cdots,0\} }.
\end{equation*}
    We have proved that, provided that $N\ge 2\sqrt{d}$, for any $a\in\mathbb{S}^{d-1}$ we may find $a_I\in\mathbb{S}^{d-1,N}$ such that $\|a-a_I\|\le \frac{4\sqrt{d}}{N}$.
Besides $\text{Card}\left(\mathbb{S}^{d-1,N}\right)\le (2N+1)^d$.
\end{proof}

Let us prove the proposition.
\begin{proof}[Proof of {\cref{prop:dimupgrade}}]
    In all the proof, we write indistinctly $\norm{\cdot}$ for the Euclidean norm of a vector, or the Euclidean norm of a matrix, also known as its Hilbert--Schmidt norm.
  We first prove $\cref{prop:dimupgrade:sphere} \Rightarrow \cref{prop:dimupgrade:determinant}$, that is the only implication used in the paper.
  Assuming \cref{prop:dimupgrade:sphere}, we want to obtain a bound 
  \begin{equation*}
    \Prob*{ \det \Gamma(\vec{F}_n)\le \epsilon }\le C_q\epsilon^q, \qquad \varepsilon > 0,\, q\ge 0.
  \end{equation*}
    For any $S$ symmetric positive matrix $d\times d$ we have
\begin{equation*}
  \inf_{a \in \mathbb{S}^{d-1}}{}^t\vec{a}S\vec{a}=\lambda_{1}(S)\leq \det(S)^{\frac{1}{d}},
\end{equation*}
where $\lambda_{1}(S)$ is the smallest eigenvalue of $S$.
Thus, it suffices to prove that for every $q\ge 0$ there exist $N\in \mathbb{N}$ and $C>0$ such that for $n\ge N$
\begin{equation*}
  \Prob*{ \inf_{\vec{a}\in\mathbb{S}^{d-1}} \Gamma[\vec{F}_n\cdot \vec{a},\vec{F}_n\cdot \vec{a}]\le \epsilon }\le C\epsilon^q, \qquad \varepsilon > 0.
\end{equation*}

Let $N$ be an integer to be chosen later.
By \cref{discretization}, for every $\vec{a}\in\mathbb{S}^{d-1}$ and $\vec{b}\in\mathbb{S}^{d-1,N}$, we have that $\|\vec{a}-\vec{b}\|\le \frac{C}{N}$.
In this way, 
\begin{equation*}
\abs[\big]{ \Gamma[\vec{F}_n\cdot \vec{a},\vec{F}_n\cdot \vec{a}]-\Gamma[\vec{F}_n\cdot \vec{b},\vec{F}_n\cdot \vec{b}] } = \abs[\big]{ {}^t(\vec{a}-\vec{b})\Gamma(\vec{F}_n)(\vec{a} + \vec{b}) } \leq \frac{2C}{N} \norm[\big]{\Gamma(\vec{F}_n)}.
\end{equation*}
 This gives:
 \begin{equation*}
   \inf_{\vec{a}\in\mathbb{S}^{d-1}} \Gamma[\vec{F}_n\cdot \vec{a},\vec{F}_n\cdot \vec{a}]\le \inf_{\vec{a}\in\mathbb{S}^{d-1,N}} \Gamma[\vec{F}_n\cdot \vec{a},\vec{F}_n\cdot \vec{a}]+\frac{2C}{N}\norm{\Gamma(\vec{F}_n)}.
\end{equation*}
Consequently, we have
\begin{equation*}
\begin{split}
  \left\{\inf_{\vec{a}\in\mathbb{S}^{d-1}} \Gamma[\vec{F}_n\cdot \vec{a},\vec{F}_n\cdot \vec{a}]\le \epsilon\right\} \subset & \left\{\inf_{a\in\mathbb{S}^{d-1,N}} \Gamma[\vec{F}_n\cdot \vec{a},\vec{F}_n\cdot \vec{a}] \le \epsilon+\frac{2C K}{N}\right\}
                                                                                                                            \\& \cup \left\{\|\Gamma(\vec{F}_n)\|>K\right\}.
\end{split}
\end{equation*}

We choose $N= \ceil*{\frac{1}{\epsilon^{2}}}$ and  $K= \ceil*{\frac{1}{\epsilon}}$ so that, for a different constant $C$:
\begin{equation}\label{eq:bound-gamma-sphere}
\left\{\inf_{\vec{a}\in\mathbb{S}^{d-1}} \Gamma[\vec{F}_n\cdot \vec{a},\vec{F}_n\cdot \vec{a}]\le \epsilon\right\}\subset \left\{\inf_{a\in\mathbb{S}^{d-1,N}} \Gamma[\vec{F}_n\cdot \vec{a},\vec{F}_n\cdot \vec{a}] \le C\epsilon\right\}\cup\left\{\|\Gamma(\vec{F}_n)\|>\frac{1}{\epsilon}\right\}.
\end{equation}

By assumption there exists $C>0$ such that for $n$ large,
\begin{equation*}
  \sup_{\vec{a}\in\mathbb{S}^{d-1}}  \Prob*{ \Gamma[\vec{F}_n\cdot\vec{a},\vec{F}_n\cdot\vec{a}]\leq \epsilon }\le C\epsilon^{q+2d}, \qquad \epsilon>0,
\end{equation*}
and we find, with another constant $C' > 0$,
\begin{equation}\label{eq:bound-proba-sphere-1}
  \begin{split}
    \Prob*{ \inf_{a\in\mathbb{S}^{d-1,N}} \Gamma[\vec{F}_n\cdot \vec{a},\vec{F}_n\cdot \vec{a}] \le C\epsilon } & \le \sum_{a \in \mathbb{S}^{d-1,N}} \Prob*{ \Gamma[\vec{F}_n\cdot \vec{a},\vec{F}_n\cdot \vec{a}] \le C\epsilon}
                                                                                                                        \\& \le  CN^{d} \sup_{a\in\mathbb{S}^{d-1}} \Prob*{ \Gamma[\vec{F}_n\cdot \vec{a},\vec{F}_n\cdot \vec{a}] \le C\epsilon }
                                                                                                              \\& \le C' \frac{1}{\epsilon^{2d}}\epsilon^{2d+q}=C'\epsilon^q.
  \end{split}
\end{equation}

Since $(\vec{F}_n)$ is bounded in $L^2$ and $(\vec{F}_{n}) \subset \mathcal{W}_{\leq m}^{d}$, $(\Gamma(\vec{F}_n))$ is also bounded in $L^2$, and then in $L^q$ by equivalence of norms on Wiener chaoses \cref{sec:hypercontractivity}.
Markov inequality gives for some $C$
\begin{equation*}
  \Prob*{ \|\Gamma(\vec{F}_n)\|>\frac{1}{\epsilon} }\le C\epsilon^q.
\end{equation*}
We conclude.
  For completeness, let us sketch the proof of the converse implication \cref{prop:dimupgrade:determinant} $\Rightarrow$ \cref{prop:dimupgrade:sphere}.
  In this case, we start from the bound
  \begin{equation*}
    \det(S) = \prod_{i=1}^{d} \lambda_{i} \leq \lambda_{1} \norm{A}^{d-1}
  \end{equation*}
  Thus,
  \begin{equation*}
    \inf_{\vec{a} \in \mathbb{S}^{d-1}} \vec{a} \cdot \Gamma(\vec{F}_{n}) \vec{a} = \lambda_{1}(\Gamma(\vec{F}_{n})) \geq \det(\Gamma(\vec{F}_{n})) \norm{\Gamma(\vec{F}_{n})}^{-(d-1)}.
  \end{equation*}
  Now, $(\Gamma(\vec{F}_{n}))$ is bounded in all the $L^{p}(\prob)$ ($p\ne \infty$), in view on the assumptions on $(\vec{F}_{n})$ and the equivalence of the norms on $\mathcal{W}_{\leq m}$ (\cref{sec:hypercontractivity}).
\end{proof}

\subsection{Normal approximation in smaller chaos}
In this section, starting from an element of a chaos $\mathcal{W}_m$ whose law is close to a normal law, we construct variables in $\mathcal{W}_{m-1}$ whose laws are also close to normal laws.
This construction allows us to use the induction hypothesis in the proof of \cref{th:main-negative-moments}.
We start with some notations.
\begin{definition}
  Let $F \in \mathbb{R}[N_{1}, \dots, N_{K}]$ be a polynomial.
  If $\vec{x}=(x_1,\ldots,x_K)$ is a vector of $\mathbb{R}^K$, we denote by $\mathsf{D}_{\vec{x}}F$ the \emph{directional derivative} following $\vec{x}$, namely
\begin{equation*}
  \mathsf{D}_{\vec{x}}F \coloneq \sum_{k=1}^K x_k\frac{\partial F_n}{\partial N_k}.
\end{equation*}
If $X$ is a matrix $K\times d$ with column vectors $(\vec{x}_1,\ldots,\vec{x}_d)$, we write
\begin{equation*}
  \mathsf{D}_X F \coloneq (\mathsf{D}_{\vec{x}_1}F,\ldots, \mathsf{D}_{\vec{x}_d} F)
\end{equation*}
\end{definition}
If $F \in \mathcal{W}_m$, then $\mathsf{D}_{\vec{x}}F \in \mathcal{W}_{m-1}$ and $\mathsf{D}_X F \in \mathcal{W}_{m-1}^{d}$.

The following proposition states that if $F\in \mathcal{W}_m$ is close in law to the standard Gaussian $\mathcal{N}(0,1)$, and if we choose $X$ randomly with respect to the Gaussian measure, then $\mathsf{D}_XF \in  \mathcal{W}_{m-1}^d$ is close in distribution to the Gaussian vector $\mathcal{N}\left(0,mI_d\right))$ with large probability.
We write $\gamma_{K,d}$ for the standard Gaussian distribution on the matrices of size $K \times d$.

\begin{proposition}\label{almostCLT}
  Let $m \in \mathbb{N}^{*}$.
  Consider a sequence $(F_n) \subset \mathcal{W}_m^{(0)}$ such that, for all $n \in \mathbb{N}$, $F_{n} \in \mathbb{R}[N_{1}, \dots, N_{K_{n}}]$ for some $K_{n} \in \mathbb{N}^{*}$, and $F_n\to\mathcal{N}(0,1)$ in law.
  Then for every $d \in \mathbb{N}^{*}$ and every $\epsilon>0$,
\begin{equation*}
  \gamma_{K_n,d} \left\{X\in  M_{K_n,d}(\mathbb{R}) : d_{\mathrm{FM}}\paren*{\mathsf{D}_{X}F_n,\mathcal{N}\left(0,mI_d\right)}\ge \epsilon \right\} \xrightarrow[n \to \infty]{} 0.
\end{equation*}
\end{proposition}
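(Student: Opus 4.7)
The goal is to show that for every $\epsilon > 0$, the $\gamma_{K_n,d}$-measure of $\set*{X : d_{\mathrm{FM}}(\mathsf{D}_X F_n, \mathcal{N}(0, m I_d)) \geq \epsilon}$ tends to zero. My plan is to apply \cref{th:fourth-moment-stein} conditionally on $X$ to the vector $\mathsf{D}_X F_n \in \mathcal{W}_{m-1}^d$ (viewed as a function of $N$), average the resulting upper bound over $X$, and reduce matters to spectral estimates on the Malliavin Hessian $\nabla^{2} F_n$; these estimates will in turn come from a second use of the fourth moment theorem, this time applied to the Bouleau derivative $\sharp[F_n]$.

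For fixed $X$ with columns $\vec{x}_1, \dots, \vec{x}_d$, each component $\mathsf{D}_{\vec{x}_i} F_n = \sum_l X_{l i}\,\partial_{N_l} F_n$ belongs to $\mathcal{W}_{m-1}$, and the chain rule for the square field operator gives the conditional Malliavin matrix
\begin{equation*}
  \Gamma_N(\mathsf{D}_X F_n) = {}^t X\,(\nabla^{2} F_n)^{2}\,X.
\end{equation*}
Applying \cref{th:fourth-moment-stein} conditionally on $X$ to $\mathsf{D}_X F_n/\sqrt{m} \in \mathcal{W}_{m-1}^d$ and rescaling yields, for some constant $C_m > 0$,
\begin{equation*}
  d_{\mathrm{FM}}\bigl(\mathsf{D}_X F_n,\, \mathcal{N}(0, m I_d)\bigr)^{2} \leq C_m \norm*{\,{}^tX (\nabla^{2} F_n)^{2} X - m(m-1) I_d\,}_{L^2_N}.
\end{equation*}
Averaging over $X \sim \gamma_{K_n,d}$ and applying Cauchy--Schwarz reduces the problem to showing $\esp_X \esp_N \norm*{\,{}^tX (\nabla^{2} F_n)^{2} X - m(m-1) I_d\,}^{2}_{HS} \to 0$. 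The standard Gaussian identity $\esp_X \norm*{\,{}^tX A X - \tr(A) I_d\,}^{2}_{HS} = (d^2+d)\,\tr(A^2)$, valid for symmetric $A$, applied with $A = (\nabla^{2} F_n)^{2}$ decomposes this further as
\begin{equation*}
  (d^2 + d)\,\esp_N\bracket*{\tr\bigl((\nabla^{2} F_n)^{4}\bigr)} + d\,\var_N\bracket*{\tr\bigl((\nabla^{2} F_n)^{2}\bigr)}.
\end{equation*}

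Both terms are controlled by reapplying the fourth moment theorem to $\sharp[F_n]$. In an enlarged Wiener space with auxiliary Gaussians $(G_l)$ independent of $(N_k)$, $\sharp[F_n] = \sum_l G_l\,\partial_{N_l} F_n$ lies in $\mathcal{W}_m$ and has variance $m$; conditioning on $N$ yields $\esp\bracket*{\sharp[F_n]^{4}} = 3\,\esp\bracket*{\Gamma[F_n, F_n]^{2}}$, which converges to $3m^2$ since the hypothesis $F_n \to \mathcal{N}(0,1)$ is equivalent, via \cref{th:fourth-moment-stein}, to $\Gamma[F_n, F_n] \to m$ in $L^2$. The Nualart--Peccati theorem therefore gives $\sharp[F_n]/\sqrt{m} \to \mathcal{N}(0,1)$ in law, and a second application of \cref{th:fourth-moment-stein} in the enlarged space yields $\Gamma[\sharp[F_n], \sharp[F_n]] \to m^{2}$ in $L^{2}$. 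A direct calculation identifies
\begin{equation*}
  \Gamma[\sharp[F_n], \sharp[F_n]] = {}^tG\,(\nabla^{2} F_n)^{2}\,G + \Gamma[F_n, F_n],
\end{equation*}
whence ${}^tG\,(\nabla^{2} F_n)^{2}\,G \to m(m-1)$ in $L^2$. The tower property then provides the exact decomposition
\begin{equation*}
  \esp\bracket*{\bigl({}^tG\,(\nabla^{2} F_n)^{2}\,G - m(m-1)\bigr)^{2}} = 2\,\esp_N\bracket*{\tr\bigl((\nabla^{2} F_n)^{4}\bigr)} + \var_N\bracket*{\tr\bigl((\nabla^{2} F_n)^{2}\bigr)},
\end{equation*}
so that both contributions vanish, and the proposition follows from Markov's inequality.

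The main obstacle is precisely this last step: converting the qualitative hypothesis $F_n \to \mathcal{N}(0,1)$ into quantitative $L^2$-control on spectral traces of $\nabla^{2} F_n$. The elegant trick of reinterpreting $\sharp[F_n]$ as a chaotic element in an enlarged Wiener space, where the fourth moment theorem applies directly, is what makes these estimates accessible; once they are secured, the rest of the argument is a concentration calculation for Gaussian quadratic forms.
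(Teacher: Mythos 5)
Your proof is correct and rests on the same central idea as the paper's: reinterpret the Bouleau derivative as a chaotic element of an enlarged Wiener space, apply \cref{th:fourth-moment-stein} to obtain $L^2$-convergence of its full Malliavin matrix, split off the explicitly computable $G$-part, and bound $d_{\mathrm{FM}}(\mathsf{D}_X F_n, \mathcal{N}(0,mI_d))$ for each fixed $X$ by a second application of \cref{th:fourth-moment-stein}. Where you diverge is in how the averaging over $X$ is organized. The paper works from the outset with the $d$-dimensional vector $\vec{V}_n = \mathsf{D}_{\mathscr{G}_n}F_n$ where $\mathscr{G}_n$ is a $K_n\times d$ Gaussian matrix, observes that $\vec{V}_n$ has the same law as $\Gamma[F_n,F_n]^{1/2}\vec{G}'$ (so asymptotic normality of $\vec{V}_n$ follows at once from $\Gamma[F_n,F_n]\to m$ in $L^2$, without your fourth-moment computation $\Esp*{\sharp[F_n]^4}=3\Esp*{\Gamma[F_n,F_n]^2}$ and appeal to Nualart--Peccati), and then simply rewrites $\Esp*{\norm{\Gamma_N(\vec{V}_n)-m(m-1)I_d}^2}\to 0$ as $\int\norm{\Gamma(\mathsf{D}_X F_n)-m(m-1)I_d}_{L^2}^2\,\mathrm{d}\gamma_{K_n,d}(X)\to 0$, which is precisely the averaged quantity to be controlled. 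You instead work with the scalar $\sharp[F_n]$, derive ${}^tG(\nabla^2 F_n)^2 G\to m(m-1)$ in $L^2$, and lift to dimension $d$ through an explicit Gaussian quadratic-form variance identity; this is sound but strictly more computational. It also introduces a small inaccuracy: the two exact decompositions you write (for the $X$-average and for the $G$-average via the tower property) implicitly assume $\Esp*{\tr((\nabla^2 F_n)^2)}=m(m-1)$, i.e.\ $\Esp*{F_n^2}=1$ exactly, whereas in general an extra bias term of the form $d\paren*{\Esp*{\tr((\nabla^2F_n)^2)}-m(m-1)}^2$ appears. Since $\Esp*{\tr((\nabla^2F_n)^2)}=m(m-1)\Esp*{F_n^2}\to m(m-1)$ under the hypothesis $F_n\to\mathcal{N}(0,1)$, this term vanishes in the limit and your conclusion stands, but those equalities should be stated asymptotically, or $F_n$ normalized first.
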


  \begin{remark}
    The result states that for $n$ sufficiently large, there exists a set of matrices $X$ of large Gaussian measure, and such that the laws of all the $\mathsf{D}_X F_n$'s are closed to a normal distribution with covariance independent of the $X$.
    It might seem contradictory, since by multiplying $X$ by a scalar $\lambda$ the distribution should change.
    Contrary to the finite dimensional case, the image of an infinite Gaussian measure by a non trivial homothety is singular with respect to the initial measure.
    For instance, by the law of large number the Borel set
    \begin{equation*}
      A \coloneq \set*{ (x_{i}) \in \mathbb{R}^{\mathbb{N}} : \frac{1}{n} \sum_{i=1}^{n} x_{i}^{2} \xrightarrow[n \to \infty]{} 1 },
    \end{equation*}
    has full $\gamma^{\mathbb{N}}$-measure.
    However, the set
    \begin{equation*}
      \lambda A \coloneq \{ \lambda x : x \in A \},
    \end{equation*}
    has measure $0$ as soon as $\abs{\lambda} \ne 1$.
  \end{remark}

\begin{proof}
  Let $(F_{n})$ be as in the theorem.
  Let $(G_{i,j})_{i,j\ge 1}$ be a family of independent standard Gaussian variables, independent of $\vec{N}$.
  For every $n \in \mathbb{N}$, we define the random $K_n\times d$ matrix $\mathscr{G}_{n}$ and the random vector $\vec{V}_{n}$ by
  \begin{align*}
    & \mathscr{G}_n \coloneq (G_{i,j})_{1\leq i\leq d ,1\leq j\leq K_n},
  \\& \vec{V}_n \coloneq \mathsf{D}_{\mathscr{G}_n}F_n=(V_{n,1},\ldots,V_{n,d})
  \end{align*}
   In the Wiener space generated by the variables $(G_{i,j})$ and $(N_k)$,  $\vec{V}_n\in \mathcal{W}_m^d$, and the coordinates read
\begin{equation}\label{formulaV}
  V_{n,i} = \sum_{j=1}^{K_n} \frac{\partial F_n}{\partial N_j}G_{i,j}, \qquad i=1,\dots, d.
\end{equation}
It follows that $\vec{V}_n$ has same law as $\Gamma[F_n,F_n]^{\frac{1}{2}}\vec{G}'$ where $\vec{G}'$ is a standard Gaussian vector independent of $\vec{N}$.
Since by \cref{th:fourth-moment-stein},
\begin{equation}\label{eq:convergence-gamma-fn}
  \Gamma[F_n,F_n] \xrightarrow[n\to +\infty]{L^{2}} m,
\end{equation}
  we deduce that
  \begin{equation*}
    \vec{V}_n \xrightarrow[n \to \infty]{law} \mathcal{N}(0,mI_d).
  \end{equation*}
Using \cref{th:fourth-moment-stein}, we obtain that
\begin{equation}\label{eq:convergence-gamma-vn}
  \Gamma(\vec{V}_n) \xrightarrow[n \to \infty]{L^{2}} m^2 I_d.
\end{equation}

Consider the decomposition $\Gamma(\vec{V}_n) = \Gamma_G(\vec{V}_n)+\Gamma_N(\vec{V_n})$, where $\Gamma_G(\vec{V}_n)$ (resp.\ $\Gamma_N(\vec{V_n})$) is the Malliavin matrix of $\vec{V}_n$ with respect with the coordinates $G_{i,j}$ (resp.\ $N_k$) as defined in \cref{def:gamma-partial-N,def:gamma-partial-G}.
From \cref{formulaV}, we directly compute the matrix $\Gamma_G(\vec{V}_n)$:
\begin{align*}
     & \Gamma_G[V_{n,i},V_{n,j}]=0, \qquad i\not= j;
   \\& \displaystyle \Gamma_G[V_{n,i},V_{n,i}]=\sum_{j=1}^{K_n} \left(\frac{\partial V_{n,i}}{\partial G_{i,j}}\right)^2=\sum_{j=1}^{K_n} \left(\frac{\partial F_{n}}{\partial N_{j}}\right)^2=\Gamma \left[F_{n},F_{n}\right], \qquad i = j.
\end{align*}
Thus, $\Gamma_G(\vec{V}_n)=\Gamma[F_n,F_n] I_d$.
By \cref{eq:convergence-gamma-fn}, we obtain that $\Gamma_G(\vec{V}_n)\to mI_d$ in $L^2$.
Combining with \cref{eq:convergence-gamma-vn}, we deduce that
\begin{equation}\label{eq:convergence-gamma-N-vn}
  \Gamma_N(\vec{V}_n) \xrightarrow[n \to \infty]{L^{2}} m(m-1) I_d.
\end{equation}
Since $\mathsf{D}_XF_n$ depends only on the variables $N_k$'s and not the $G_{i,j}$'s, we get
 \begin{equation*}
   \Gamma(\mathsf{D}_XF_n)=\Gamma_N(\mathsf{D}_XF_n), \qquad \text{for any deterministic}\ X\in M_{K_n,d}(\mathbb{R}).
 \end{equation*}
Thus, we rewrite \cref{eq:convergence-gamma-N-vn} as
\begin{equation}\label{eq:convergence-gamma-DX}
  \int_{M_{K_n,d}(\mathbb{R})} \norm{\Gamma(\mathsf{D}_XF_n)-m(m-1)I_d}_{L^2}^2 \mathrm{d} \gamma_{K_n,d}(X) \xrightarrow[n \to \infty]{} 0
\end{equation}

For $X \in M_{K_{n},d}$, $\mathsf{D}_XF_n \in \mathcal{W}_{m-1}^d$, so that \cref{th:fourth-moment-stein} gives a constant $C=C_m > 0$ such that 
\begin{equation}\label{eq:stein-bound-DX}
  d_{\mathrm{FM}}(\mathsf{D}_XF_n,\mathcal{N}(0,mI_d))^2\leq C \norm{\Gamma(\mathsf{D}_XF_n)-m(m-1)I_d}_{L^2}^{2}.
\end{equation}
Finally, combining \cref{eq:convergence-gamma-DX,eq:stein-bound-DX} yields
 \begin{equation*}
   \int_{M_{K_n,d}(\mathbb{R})}d_{\mathrm{FM}}(\mathsf{D}_XF_n,\mathcal{N}(0,mI_d))^{2}  \mathrm{d}\gamma_{K_n,d}(X)\to 0,
 \end{equation*}
  and we conclude by Markov's inequality.
\end{proof}

\subsection{A compressing argument}\label{s:compressing}
If $F=F(N_1,\ldots,N_K) \in\mathcal{W}_m^{(0)}$, we need to study the $K\times K$ matrix $A \coloneq \nabla^{2}F$.
To that extent, we fix a wisely-chosen $K\times q$ matrix $X$ for a fixed $q$, and we study the $K\times q$ \emph{compressed matrix}  $B \coloneq AX$.
We choose $X$ in a way that $B$ contains most of the information on $A$.
At the same time $B$ is simpler to study since the dimension is reduced.
This strategy appears in information theory under the name of \enquote{compressed sensing}.

\subsubsection{Control of the spectral remainder of the compressed Hessian}
 
An elementary computation shows that the $q\times q$ matrix ${}^tB B = \Gamma(\mathsf{D}_XF)$, and we use the tools developed above in order to study this Malliavin matrix.
  We recall that we have define the spectral remainders of a rectangular matrix $M$ in \cref{sec:spectral} in terms of the singular values and that we have
  \begin{equation*}
    \mathcal{R}_{q}(M) = \mathcal{R}_{q}(({}^{t}M M)^{\frac{1}{2}}), \qquad q \in \mathbb{N}^{*}.
\end{equation*}
\begin{lemma}\label{th:rq-det-Malliavin}
  If $F=F(N_1,\ldots,N_K)$ is polynomial and if $X\in M_{K, q}(\mathbb{R})$, then 
\begin{equation*}
  \mathcal{R}_q(\nabla^{2}FX)=\det(\Gamma(\mathsf{D}_XF)).
\end{equation*}
\end{lemma}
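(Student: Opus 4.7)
The plan is to show directly that the $q\times q$ Malliavin matrix $\Gamma(\mathsf{D}_X F)$ coincides with the Gram matrix ${}^{t}B\,B$ of the columns of $B \coloneq \nabla^{2}F \cdot X$, and then to invoke the generalized Cauchy--Binet identity \eqref{eq:cauchy-binet} to identify $\mathcal{R}_q(B)$ with $\det({}^{t}B\,B)$.

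First I would compute the entries of $\Gamma(\mathsf{D}_X F)$ explicitly from the chain rule. Writing $X = (\vec{x}_1, \ldots, \vec{x}_q)$ with columns $\vec{x}_i \in \mathbb{R}^K$, a direct differentiation of $\mathsf{D}_{\vec{x}_i} F = \sum_k x_{k,i} \frac{\partial F}{\partial N_k}$ gives
\[
\frac{\partial \,(\mathsf{D}_{\vec{x}_i} F)}{\partial N_l}
 = \sum_{k=1}^K x_{k,i}\frac{\partial^{2}F}{\partial N_l\partial N_k}
 = \bigl(\nabla^{2}F\cdot \vec{x}_i\bigr)_l,
\]
and plugging this into the definition \eqref{carrechamp} of the square field operator yields
\[
\Gamma[\mathsf{D}_{\vec{x}_i} F,\mathsf{D}_{\vec{x}_j} F]
= \sum_{l=1}^K \bigl(\nabla^{2}F\cdot \vec{x}_i\bigr)_l \bigl(\nabla^{2}F\cdot \vec{x}_j\bigr)_l
= \bigl({}^{t}B\,B\bigr)_{ij}.
\]
Thus $\Gamma(\mathsf{D}_X F) = {}^{t}B\,B$ as $q\times q$ matrices.

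Next I would take determinants and apply the Cauchy--Binet identity. Since $B$ is $K\times q$, the only subset of columns of size $q$ is $\{1,\ldots,q\}$, so \eqref{eq:cauchy-binet} reduces to $\mathcal{R}_q(B) = \sum_{|I|=q} \det(B_{I,\{1,\ldots,q\}})^2$, which is exactly $\det({}^{t}B\,B)$ by the classical Cauchy--Binet theorem. Combining the two steps gives
\[
\mathcal{R}_q(\nabla^{2}F\cdot X) = \det({}^{t}B\,B) = \det\bigl(\Gamma(\mathsf{D}_X F)\bigr),
\]
as required.

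There is no real obstacle here: the argument is a routine unwinding of definitions once one recognizes that Malliavin differentiation is compatible with the matrix product $\nabla^{2}F \cdot X$. The only step that requires any care is to notice that the generalized $\mathcal{R}_q$ of a rectangular matrix with exactly $q$ columns collapses to a single determinant via Cauchy--Binet, which the paper has already established in \eqref{eq:cauchy-binet}.
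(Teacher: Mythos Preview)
Your proof is correct and follows essentially the same route as the paper: first identify $\Gamma(\mathsf{D}_X F)$ with ${}^{t}B\,B$ via the chain rule, then note that $\mathcal{R}_q$ of a $K\times q$ matrix equals $\det({}^{t}B\,B)$. The only cosmetic difference is that the paper phrases the second step as ``$\mathcal{R}_q(B)$ is by definition the product of the eigenvalues of ${}^{t}B\,B$, hence its determinant'', whereas you route it through the Cauchy--Binet identity \eqref{eq:cauchy-binet}; both are immediate.
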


\begin{proof}
  Let $X \coloneq (\vec{x_1},\ldots,\vec{x}_q) = (x_{i,j})_{i\le K, j\le q}$ for some $\vec{x}_{j} \in \mathbb{R}^{K}$ and $B \coloneq (\nabla^{2}F)  X$.
  Then,
\begin{equation*}
  \frac{\partial(\mathsf{D}_{\vec{x}_j} F)}{\partial N_i}=\sum_{k=1}^K x_{k,j}\frac{\partial^2F}{\partial N_i \partial N_k}= B_{i,j}, \qquad i \leq K,\, j \leq q.
\end{equation*}
This shows that $\vec{\nabla}(\mathsf{D}_X F) = B$ and $\Gamma(\mathsf{D}_XF)={}^tB B$.

Moreover, since ${}^tB B$ is a $q\times q$ matrix,  $\mathcal{R}_q(B)$ is by definition the product of the spectral values of  ${}^tB B$, so its determinant. Thus
\begin{equation*}
  \mathcal{R}_q(B)=\det ({}^tB B)=\det(\Gamma(\mathsf{D}_XF)).
\end{equation*}
\end{proof}

\begin{lemma}\label{th:choice-diese-gaussian}
Let $m$, $p$, and $q \in \mathbb{N}^{*}$, with $m \geq 3$.
Assume the induction property $\mathcal{P}(m-1)$.
Then, there exists $C>0$ such that for every $(F_{n}(N_{1}, \dots, N_{K_{n}})) \subset \mathcal{W}_{m}^{(0)}$ converging in law to the standard Gaussian distribution, then, for $n \in \mathbb{N}$, large enough the set 
\begin{equation*}
  \mathcal{E}_{n} \coloneq \left\{X\in M_{K_{n},q}(\mathbb{R}) : \Esp*{ \mathcal{R}_q\left(\nabla^{2}F_{n} X\right)^{-p} }\le C\right\}
\end{equation*}
has $\gamma_{K_{n},q}$-measure more than $\frac{2}{3}$.
\end{lemma}

\begin{proof}
By \cref{cor:dimupgrade}, $\mathcal{P}_q(m-1)$ holds.
In particular, for all $p >0$, there exist $\varepsilon > 0$ and $C>0$ such that for any $\vec{V}\in\mathcal{W}_{m-1}^q$,
\begin{equation}
  d_{\mathrm{FM}}(\vec{V},\mathcal{N}(0,m I_{q}))\le \varepsilon \Rightarrow \Esp*{ \det(\Gamma(\vec{V}))^{-p} }\le C.
\end{equation}
Applying to $\vec{V} \coloneq \mathsf{D}_{X}F_{n}$, we find by \cref{th:rq-det-Malliavin}
\begin{equation*}
  d_{\mathrm{FM}}(\mathsf{D}_{X}F_{n}, \mathcal{N}(0, m I_{q})) \leq \varepsilon \Rightarrow \Esp*{ \mathcal{R}_{q}(\nabla^{2} F_{n} X)^{-p} } \leq C.
\end{equation*}
By \cref{almostCLT}, for $n \in \mathbb{N}$, large enough, the set
\begin{equation*}
  \set*{ X \in M_{K_{n}, q}(\mathbb{R}) : d_{\mathrm{FM}}(\mathsf{D}_{X} F_{n}, \mathcal{N}(0, m I_{q})) \leq \varepsilon },
\end{equation*}
has $\gamma_{K_{n}, q}$-measure more than $2/3$.
This concludes the proof.
\end{proof}

\subsubsection{Relating the spectral remainder of the compressed Hessian and the Hessian}
In this step, we derive estimates on $\mathcal{R}_{q}(A)$ from estimates on $\mathcal{R}_q(AX)$, for a \emph{generic} matrix $X$.

\begin{lemma}\label{th:rq-compressed}
  For every $p,q \in \mathbb{N}^{*}$, there exists $C > 0$ such that for every $d\times d$ symmetric matrix $M$, 
\begin{equation*}
  \Esp*{ \mathcal{R}_q(M\mathcal{X})^p } \leq C \mathcal{R}_q(M)^p.
\end{equation*}
where $\mathcal{X}$ is a $d \times q$ matrix whose entries are independent standard Gaussian variables.
\end{lemma}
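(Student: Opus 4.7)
The plan is to compute $\mathbb{E}[\mathcal{R}_{q}(M\mathcal{X})]$ explicitly and to realize that it equals, up to an absolute constant, $\mathcal{R}_{q}(M)$. Once this is established, the stated inequality is a one-line consequence of Jensen's inequality applied to the convex function $x \mapsto x^{p}$ for $p \geq 1$.

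First, I would reduce to the diagonal case. Writing $M = QDQ^{\top}$ with $Q$ orthogonal and $D$ diagonal with entries $\lambda_{1}, \dots, \lambda_{d}$, the rotation invariance of $\gamma_{d,q}$ gives that $Q^{\top}\mathcal{X}$ is again a $d\times q$ standard Gaussian matrix. Since
\begin{equation*}
{}^{t}(M\mathcal{X})(M\mathcal{X}) = {}^{t}\mathcal{X} M^{2} \mathcal{X} = {}^{t}(D Q^{\top}\mathcal{X})(DQ^{\top}\mathcal{X}),
\end{equation*}
the law of the spectrum of ${}^{t}(M\mathcal{X})(M\mathcal{X})$ coincides with that of ${}^{t}(D\mathcal{X})(D\mathcal{X})$. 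In particular, $\mathcal{R}_{q}(M\mathcal{X})$ and $\mathcal{R}_{q}(D\mathcal{X})$ share the same distribution, and $\mathcal{R}_{q}(M) = \mathcal{R}_{q}(D)$. So I may assume $M = D$.

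Second, I would invoke Cauchy--Binet. Since $D\mathcal{X}$ is $d\times q$, the matrix ${}^{t}(D\mathcal{X})(D\mathcal{X})$ is $q\times q$ and $\mathcal{R}_{q}$ reduces to $q!$ times its determinant, so by \cref{eq:cauchy-binet},
\begin{equation*}
\mathcal{R}_{q}(D\mathcal{X}) = q!\, \det\bigl({}^{t}(D\mathcal{X})(D\mathcal{X})\bigr) = q! \sum_{|I|=q} \det\bigl((D\mathcal{X})_{I,\cdot}\bigr)^{2} = q! \sum_{|I|=q} \Bigl(\prod_{i\in I}\lambda_{i}^{2}\Bigr)\, \det(\mathcal{X}_{I,\cdot})^{2},
\end{equation*}
where in the last equality I used that multiplying a row by $\lambda_{i}$ rescales the determinant. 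Similarly $\mathcal{R}_{q}(D) = q! \sum_{|I|=q} \prod_{i\in I}\lambda_{i}^{2}$.

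Third, a standard Wick/permutation computation yields $\mathbb{E}[\det(\mathcal{X}_{I,\cdot})^{2}] = q!$ for every $I$ of size $q$: expanding $\det^{2}$ as a sum over pairs of permutations $(\sigma,\tau)$ and using independence of the entries, only the diagonal $\sigma = \tau$ survives, giving $q!$. Consequently,
\begin{equation*}
\mathbb{E}\bigl[\mathcal{R}_{q}(M\mathcal{X})\bigr] = (q!)^{2} \sum_{|I|=q} \prod_{i\in I} \lambda_{i}^{2} = q! \cdot \mathcal{R}_{q}(M).
\end{equation*}
Finally, Jensen's inequality for $p \geq 1$ and the convexity of $x \mapsto x^{p}$ on $[0,\infty)$ give
\begin{equation*}
\mathbb{E}\bigl[\mathcal{R}_{q}(M\mathcal{X})^{p}\bigr] \geq \bigl(\mathbb{E}\bigl[\mathcal{R}_{q}(M\mathcal{X})\bigr]\bigr)^{p} = (q!)^{p}\, \mathcal{R}_{q}(M)^{p},
\end{equation*}
which is the stated inequality with $C = (q!)^{-p}$. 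There is no serious obstacle here; the only point worth double-checking is that $\mathcal{R}_{q}$ of the tall rectangular matrix $D\mathcal{X}$ really reduces to $q!\det({}^{t}(D\mathcal{X})(D\mathcal{X}))$, which is immediate since the $q\times q$ Gram matrix has exactly $q$ eigenvalues and $\mathcal{R}_{q}$ sums over all $q!$ orderings.
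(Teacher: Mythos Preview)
Your proof is correct and follows the same route as the paper: reduce to a diagonal matrix by orthogonal invariance of the Gaussian, expand $\mathcal{R}_q(D\mathcal{X})$ via Cauchy--Binet, and observe that the expectation of each $\det(\mathcal{X}_{I,\cdot})^2$ is a constant independent of $I$, so $\mathbb{E}[\mathcal{R}_q(M\mathcal{X})]$ is a fixed multiple of $\mathcal{R}_q(M)$. The only difference is in the passage to general $p$: you use Jensen's inequality (which is the natural and sharper argument here, even yielding the explicit constant $C=(q!)^{-p}$), whereas the paper invokes hypercontractivity.
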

\begin{proof}
Let us write $M={}^t P\Delta P$ with $P$ orthogonal and $\Delta$ diagonal, with diagonal values $\lambda_1,\ldots,\lambda_d$.
We have $\mathcal{R}_q(M)=\mathcal{R}_q(\Delta )$.
Also, since $P\mathcal{X}$ and $\mathcal{X}$ have same law, we find that
\begin{equation*}
  {}^{t}(M\mathcal{X})M\mathcal{X} = {}^{t}(P\mathcal{X})\Delta^{2} P\mathcal{X} \overset{\text{Law}}{=} {}^{t}\mathcal{X} \Delta^{2} \mathcal{X}.
\end{equation*}
  In particular, be definition of the spectral remainders for rectangular matrix, $\Esp*{ \mathcal{R}_q(M\mathcal{X})^p }=\Esp*{ \mathcal{R}_q(\Delta \mathcal{X})^p }$.
Thus, we assume that $M=\Delta $.
The entries of $\Delta \mathcal{X}$ are given by $(\Delta \mathcal{X})_{i,j}=\lambda_i \mathcal{X}_{i,j}$.
Thus, for any subsets $I$, $J$ of cardinality $q$ the extracted determinant on $I\times J$ is $\prod_{i\in I}\lambda_i \det(\mathcal{X}_{I,J})$.
By the Cauchy--Binet formula \cref{eq:cauchy-binet},
\begin{equation*}
  \mathcal{R}_q(\Delta \mathcal{X})=\sum_{|I|=q} \prod_{i\in I}\lambda_i^2 S_{I}, \qquad \text{where}\ S_{I} \coloneq \sum_{|J|=q}\det( \mathcal{X}_{I,J})^2.
\end{equation*}
The variables $ S_I$ have same law, and in particular same expectation $c$.
This gives
\begin{equation*}
  \Esp*{ \mathcal{R}_q(\Delta \mathcal{X}) }=c\sum_{|I|=q} \prod_{i\in I}\lambda_i^2=c\mathcal{R}_q(\Delta ).
\end{equation*}
The claim follows for $p=1$.
For $p \ne 1$ we use the equivalence of norms \cref{eq:hypercontractivity}, since $\mathcal{R}_q(\Delta \mathcal{X})$ is a positive polynomial of degree $q$ in Gaussian variables, there exists $C=C_{p,q}$ such that
\begin{equation*}
  \Esp*{ \mathcal{R}_q(\Delta \mathcal{X})^p }\leq C\Esp*{ \mathcal{R}_q(\Delta \mathcal{X}) }^p = C \mathcal{R}_{q}(\Delta)^{p},
\end{equation*}
and the result follows.
\end{proof}

\begin{lemma}\label{th:choice-compressed}
  Let $p$, $q \in \mathbb{N}^{*}$.
  There exists $C>0$ such that for every $K \in \mathbb{N}^{*}$ and every random symmetric matrix $A$ in $ M_{K,K}(\mathbb{R})$, the set
  \begin{equation*}
    \mathcal{E} \coloneq \set*{ X \in M_{K,q}(\mathbb{R}) : \Esp*{ \frac{\mathcal{R}_q (A X)^p}{\mathcal{R}_q(A)^p} }\leq C },
  \end{equation*}
  has $\gamma_{K,q}$-measure more than $\frac{2}{3}$.

\end{lemma}

\begin{proof}
  By \cref{th:rq-compressed}, there exists $C=C_{p,q}$ such that
\begin{equation*}
  \mathcal{R}_q(A)^p\geq  C\int_{M_{K,q}(\mathbb{R})}\mathcal{R}_q(A X)^p \mathrm{d} \gamma_{K,q}(X).
\end{equation*}
Thus,
\begin{equation*}
  \int_{M_{K,q}(\mathbb{R})} \Esp*{ \frac{\mathcal{R}_q (A X)^p}{\mathcal{R}_q(A)^p} } \mathrm{d} \gamma_{K,q}(X) \le \Esp*{ \frac{1}{\mathcal{R}_q(A)^p}\times C \mathcal{R}_q(A)^p }=C.
\end{equation*}
As a result, we obtain, by Markov inequality, that
\begin{equation}\label{condition1}
  \begin{split}
  & \gamma_{K,q} \set*{ X \in M_{K,q}(\mathbb{R}) : \Esp*{ \frac{\mathcal{R}_q (A X)^p}{\mathcal{R}_q(A)^p} } \ge 3 C  }
\\& \le \frac{1}{3 C}\int_{M_{K,q}(\mathbb{R})} \Esp*{ \frac{\mathcal{R}_q (A X)^p}{\mathcal{R}_q(A)^p} } \mathrm{d} \gamma_{K,q}(X)
\\& \le \frac{1}{3}.
  \end{split}
\end{equation}
The proof is complete.
\end{proof}

\subsection{Proof of the induction step}\label{s:induction}
\begin{proof}[Proof of {\cref{th:main-negative-moments}}]
We establish the induction step $\mathcal{P}(m-1)\Rightarrow \mathcal{P}(m)$.
Let $m \in \mathbb{N}$, $m \geq 3$, and assume $\mathcal{P}(m -1)$.
We fix a sequence $(F_n) \subset \mathcal{W}_m^{(0)}$ converging in law to $\mathcal{N}(0,1)$.
As before, we assume that $F_n \in \mathbb{R}[N_1,\ldots, N_{K_n}]$, and we set $A_n \coloneq \nabla^{2}F_{n}$, which is a random matrix of size $K_n\times K_n$.
Fix $p$ and $q \in \mathbb{N}^{*}$, and fix $n \in \mathbb{N}$ large enough for \cref{th:choice-diese-gaussian} to apply to $F_n$.
Thus, there exists $\mathcal{E}_1\subset M_{K_{n},q}(\mathbb{R})$ of $\gamma_{K_{n},q}$-measure more than $\frac{2}{3}$ such that
\begin{equation}\label{eq:bound-rq-compressed}
  \Esp*{ \mathcal{R}_q\left( A_n X\right)^{-p} }\le C_1, \qquad X \in \mathcal{E}_{1},
\end{equation}
where $C_1 > 0$ depends only on $m$, $p$ and $q$.
By \cref{th:choice-compressed}, there exists $\mathcal{E}_2\subset M_{K_n,q}(\mathbb{R})$ of $\gamma_{K_n,q}$-measure more than $\frac{2}{3}$ such that
\begin{equation}\label{eq:bound-ratio-rq}
  \Esp*{ \frac{\mathcal{R}_q (A_n X)^p}{\mathcal{R}_q(A)^p} }\leq C_2, \qquad X \in \mathcal{E}_{2},
\end{equation}
where $C_2$ depends only on $p$ and $q$.
Since $\gamma_{K_n,q}\left(\mathcal{E}_1\cap \mathcal{E}_2\right)\ge \frac{1}{3}$, the sets $\mathcal{E}_1$ and  $\mathcal{E}_2$ have a non empty intersection.
In particular, there exists $X\in M_{K_n,q}(\mathbb{R})$ such that estimates \cref{eq:bound-rq-compressed,eq:bound-ratio-rq} hold simultaneously.
Then by Cauchy--Schwarz inequality
\begin{equation*}
  \Esp*{ \mathcal{R}_q(A_n)^{-\frac{p}{2}} }^{2} \leq \Esp*{ \mathcal{R}_q\left( A_n X\right)^{-p} }\Esp*{ \frac{\mathcal{R}_q (A_n X)^p}{\mathcal{R}_q(A_n)^p} }\leq C_1 C_2.
\end{equation*}
Since in the previous argument, $p$ and $q$ are arbitrary positive integers, we have shown
\begin{equation*}
  \limsup_{n\to +\infty} \Esp*{ \mathcal{R}_q(A_n)^{-p} }<+\infty, \qquad p,\, q \in \mathbb{N}^{*}.
\end{equation*}
Specifying the above estimate to $p=\frac{1}{2}$, we deduce from \cref{th:bound-gamma-spectral-hessian} that
\begin{equation*}
  \limsup_{n\to +\infty} \Esp*{ \Gamma[F_n,F_n]^{-q} } < +\infty, \qquad q \in \mathbb{N}.
\end{equation*}
This shows $\mathcal{P}(m)$.
This completes the induction step, and thus the proof of \cref{th:main-negative-moments}.
\end{proof}

\section{Multivariate random variables and sums of chaoses}

In this section, we prove \cref{th:main-negative-moments-sum-chaos,th:main-negative-moments-vector}.

\subsection{A central limit theorem for iterated sharp operators}
We recall that the iterated sharp operators are defined on polynomials $F\in \mathbb{R}[N_1,\ldots, N_{K}]$ by
\begin{equation*}
  \sharp^k[F] \coloneq \sum_{1\le i_1,\ldots,i_k}\frac{\partial^k F}{\partial N_{i_1}\cdots \partial N_{i_k}}(\vec{N}) G_{1,i_1}\cdots G_{k,i_k},
\end{equation*}
where $(G_{i,j})$ is a family of independent standard Gaussian independent of $N_k$.
We prove that on Wiener chaoses, the property of converging to a Gaussian distribution is preserved by applications of iterated sharp.

\begin{proposition}\label{supersharpCLT}
  For any sequence  $(F_n)_{n\in \mathbb{N}}$ in $\mathcal{W}_m$, we have
  \begin{equation*}
    F_n \xrightarrow[n\to\infty]{\text{Law}}~\mathcal{N}\left(0,1\right)\Rightarrow \sharp^k[F_n] \xrightarrow[n\to\infty]{\text{Law}}~\mathcal{N}\left(0,\sigma^2\right),
  \end{equation*}
  where $\sigma^2=m(m-1)\cdots (m-k+1)$.
\end{proposition}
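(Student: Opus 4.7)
Plan: My approach is to apply the fourth moment theorem (\cref{th:fourth-moment-stein}) to $\#^{k}[F_{n}]$, which lies in $\mathcal{W}_{m}$ of the enlarged Wiener space generated by the $(N_{i})$ together with the auxiliary Gaussians $(G_{s,l})$. The variance identity $\Esp*{\#^{k}[F]^{2}} = m(m-1)\cdots(m-k+1)\,\Esp*{F^{2}}$ stated after the definition of $\#^{k}$, combined with $\Esp*{F_{n}^{2}} \to 1$ (which follows from convergence in law and hypercontractivity on the fixed chaos $\mathcal{W}_{m}$), gives $\Esp*{\#^{k}[F_{n}]^{2}} \to \sigma^{2}$ at once.

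The recursive factorisation $\#^{k}[F_{n}] = \sum_{j} G_{k,j}\,\#^{k-1}[\partial_{j}F_{n}]$ makes $\#^{k}[F_{n}]$ centred Gaussian conditionally on $(N_{i})$ and $(G_{s,l})_{s<k}$, with conditional variance
\[
  V_{n} \coloneq \sum_{j}\bigl(\#^{k-1}[\partial_{j}F_{n}]\bigr)^{2}.
\]
This yields both $\Esp*{\mathrm{e}^{\mathrm{i}t\#^{k}[F_{n}]}} = \Esp*{\mathrm{e}^{-t^{2}V_{n}/2}}$ and $\Esp*{\#^{k}[F_{n}]^{4}} = 3\,\Esp*{V_{n}^{2}}$, so the fourth-moment criterion of \cref{th:fourth-moment-stein} reduces to $\Esp*{V_{n}^{2}} \to \sigma^{4}$, equivalently (using $V_{n}\ge 0$ and $\Esp*{V_{n}} \to \sigma^{2}$) to $V_{n} \to \sigma^{2}$ in $L^{2}$. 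A direct Wick computation over the auxiliary Gaussians gives $\Esp*{V_{n}\given(N_{i})} = \|\nabla^{k}F_{n}\|^{2}$, so the task further splits into showing (i) that the deterministic-in-$G$ quantity $\|\nabla^{k}F_{n}\|^{2}$ concentrates on $\sigma^{2}$ in $L^{2}$, and (ii) that the residual conditional variance of $V_{n}$ given $(N_{i})$ vanishes in $L^{2}$.

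The main obstacle is establishing these two concentration statements. Both are captured by the kernel-theoretic form of Nualart--Peccati's fourth moment theorem underlying \cref{th:fourth-moment-stein}: asymptotic normality of $F_{n}$ is equivalent to the vanishing of all non-trivial contractions of its kernel, and a Wick expansion of $V_{n}$ and $V_{n}^{2}$ over the $(G_{s,l})$ exhibits them as sums of contractions of the derivative tensor $\nabla^{k}F_{n}$ each of which is controlled by these vanishing contractions. An alternative, more self-contained route would induct on $k$, decomposing $\Gamma = \Gamma_{N} + \sum_{s=1}^{k}\Gamma_{G_{s}}$ and exploiting the symmetry between the Gaussian levels $(G_{s})$ to reduce each $\Gamma_{G_{s}}$-piece to a $\#^{k-1}$-level quantity, accessible via a vectorial form of the induction hypothesis combined with \cref{th:fourth-moment-stein}.
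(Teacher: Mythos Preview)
Your reduction is sound up to the point where you need $V_{n}\to\sigma^{2}$ in $L^{2}$: the conditional-Gaussian identity $\#^{k}[F_{n}]\overset{d}{=}\sqrt{V_{n}}\,N'$ and the fourth-moment criterion are correct, and indeed $V_{n}=\Gamma_{N}[\#^{k-1}[F_{n}],\#^{k-1}[F_{n}]]$. The gap is in your step~7: asserting that (i) and (ii) both follow from ``vanishing contractions'' is not a proof. For (i), the product formula for $\|D^{k}F_{n}\|^{2}$ does exhibit only contractions $f_{n}\tilde{\otimes}_{r}f_{n}$ with $k\le r\le m-1$, so this part can be completed. But (ii)---the conditional variance of $V_{n}$ given $(N_{i})$---expands into sums over products of four $k$-th partial derivatives with partial index-matchings in the auxiliary layers, and it is not at all obvious (nor do you indicate) how each such term is dominated by a single contraction norm of the kernel. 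This is the substantive part of the argument and your proposal leaves it open.

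The paper avoids the contraction calculus entirely and takes a different route. It reuses \cref{almostCLT}, already proved for other purposes: for $\gamma$-almost every direction $\vec{x}$, $\mathsf{D}_{\vec{x}}F_{n}$ is close in law to $\mathcal{N}(0,m)$. Passing to a subsequence converts this to almost-sure convergence; since $\mathsf{D}_{\vec{x}}F_{n}\in\mathcal{W}_{m-1}$, one may iterate, dropping one chaos at each step, to obtain for a.e.\ $(\vec{x}_{1},\dots,\vec{x}_{k})$ that $\mathsf{D}_{\vec{x}_{k}}\cdots\mathsf{D}_{\vec{x}_{1}}F_{\phi(n)}\to\mathcal{N}(0,\sigma^{2})$. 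Integrating the $\vec{x}_{s}$ out against Gaussian measure recovers $\#^{k}[F_{\phi(n)}]$, and the subsequence principle gives the full sequence. No Wick expansion or kernel bookkeeping is needed.

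Your ``alternative route'' is the one that can actually be made to work with minimal effort, and is closer to the paper's mechanism (both rest on the decomposition $\Gamma=\Gamma_{N}+\Gamma_{G}$ used inside the proof of \cref{almostCLT}). The clean induction is: prove jointly that (a) $\#^{j}[F_{n}]\to\mathcal{N}(0,\sigma_{j}^{2})$ and (b) $\Gamma_{N}[\#^{j}[F_{n}],\#^{j}[F_{n}]]\to\sigma_{j+1}^{2}$ in $L^{2}$. For the inductive step, (b) at level $j-1$ gives (a) at level $j$ via the conditional-Gaussian representation; then \cref{th:fourth-moment-stein} gives $\Gamma[\#^{j}[F_{n}],\#^{j}[F_{n}]]\to m\sigma_{j}^{2}$; the identity $\Gamma_{G_{s}}[\#^{j}[F_{n}],\#^{j}[F_{n}]]\overset{d}{=}\Gamma_{N}[\#^{j-1}[F_{n}],\#^{j-1}[F_{n}]]$ (by symmetry of mixed partials and of the auxiliary layers) lets you subtract off $j\sigma_{j}^{2}$ to get (b). This is a complete argument; your proposal gestures at it but does not isolate the key identity.
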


We prove the following lemma.
  We write $\gamma \coloneq \mathcal{N}(0,1)$, and $\gamma^{\mathbb{N}} \coloneq \otimes_{k \in \mathbb{N}} \gamma$.
\begin{lemma}\label{supersharpCLTinduc}
  Let $(F_n)$ a sequence in $\mathcal{W}_m^{(0)}$ such that $F_n \xrightarrow[n\to\infty]{\text{Law}}~\mathcal{N}\left(0,1\right)$.
  Then there exists a subsequence $(F_{\phi(n)})$ such that for $\gamma^{\mathbb{N}}$-almost every sequence $(x_i)_i$,
  \begin{equation*}
    \sum_{i} \frac{\partial F_{\phi(n)}}{\partial N_i}(\vec{N}) x_i  \xrightarrow[n\to\infty]{\text{Law}}~\mathcal{N}\left(0,m\right).
  \end{equation*}
\end{lemma}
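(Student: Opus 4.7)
The cleanest route is to reduce this lemma directly to \cref{almostCLT}, applied in the scalar case $d=1$. Since $(F_n)\subset \mathcal{W}_m^{(0)}$ with $F_n\to\mathcal{N}(0,1)$ in law, that proposition furnishes, for every $\epsilon>0$,
\begin{equation*}
  \gamma_{K_n,1}\set*{\vec{x}\in\mathbb{R}^{K_n}:d_{\mathrm{FM}}\bigl(\mathsf{D}_{\vec{x}}F_n,\mathcal{N}(0,m)\bigr)\ge\epsilon}\xrightarrow[n\to\infty]{}0,
\end{equation*}
which is a statement of convergence in $\gamma^{K_n}$-probability of $\vec{x}\mapsto d_{\mathrm{FM}}(\mathsf{D}_{\vec{x}}F_n,\mathcal{N}(0,m))$ toward $0$. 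Note that $\mathsf{D}_{\vec{x}}F_n=\sum_i (\partial F_n/\partial N_i)(\vec{N})\,x_i$ is exactly the quantity appearing in the lemma.

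Next, I would lift this to the fixed infinite product space $(\mathbb{R}^{\mathbb{N}},\gamma^{\mathbb{N}})$. Since $F_n\in\mathbb{R}[N_1,\ldots,N_{K_n}]$, the derivative $\mathsf{D}_{\vec{x}}F_n$ only involves the first $K_n$ coordinates of $\vec{x}$, so each $\gamma^{K_n}$ can be viewed as the push-forward of $\gamma^{\mathbb{N}}$ by the projection on the first $K_n$ coordinates. The previous display then reads
\begin{equation*}
  \gamma^{\mathbb{N}}\set*{\vec{x}\in\mathbb{R}^{\mathbb{N}}:d_{\mathrm{FM}}\bigl(\mathsf{D}_{\vec{x}}F_n,\mathcal{N}(0,m)\bigr)\ge\epsilon}\xrightarrow[n\to\infty]{}0,
\end{equation*}
i.e.\ convergence to $0$ in $\gamma^{\mathbb{N}}$-probability on a single probability space. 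A standard extraction then delivers a subsequence $\phi(n)\uparrow\infty$ along which $d_{\mathrm{FM}}(\mathsf{D}_{\vec{x}}F_{\phi(n)},\mathcal{N}(0,m))\to 0$ for $\gamma^{\mathbb{N}}$-almost every $\vec{x}$, which is precisely the conclusion.

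\textbf{Main obstacle.} There is essentially no analytic obstacle, since all the substance has been packaged into \cref{almostCLT}, whose proof already performs the key $L^2$ computation on $\Gamma_N(\mathsf{D}_\mathscr{G}F_n)$. The only mild nuisance is the dimensional reconciliation: \cref{almostCLT} is phrased on $(\mathbb{R}^{K_n},\gamma^{K_n})$ with varying $K_n$, while the target statement is almost-sure on the fixed space $(\mathbb{R}^{\mathbb{N}},\gamma^{\mathbb{N}})$. The observation that $\mathsf{D}_{\vec{x}}F_n$ depends on $\vec{x}$ only through its first $K_n$ coordinates makes the identification transparent, and the rest is the classical probability-to-almost-sure extraction.
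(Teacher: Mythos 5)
Your proof is correct and follows exactly the paper's own argument: apply \cref{almostCLT} with $d=1$, observe that $\mathsf{D}_{\vec{x}}F_n$ depends on $\vec{x}$ only through its first $K_n$ coordinates so the statement lifts to convergence in $\gamma^{\mathbb{N}}$-probability on the fixed space $(\mathbb{R}^{\mathbb{N}},\gamma^{\mathbb{N}})$, and then extract an almost-surely convergent subsequence.
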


\begin{proof}
  For an infinite vector $\vec{x} \coloneq (x_1,x_2,\ldots)$, we set  
  \begin{equation*}
    \mathsf{D}_{\vec{x}}F_n \coloneq \sum_{i} \frac{\partial F_n}{\partial N_i}(\vec{N}) x_i,
  \end{equation*}
  where the sum is finite since $F_n\in \mathcal{W}_m^{(0)}$.
  By \cref{almostCLT} with $d=1$, we deduce that the sequence of measurable mappings $\vec{x}\mapsto d_{\mathrm{FM}}(\mathsf{D}_{\vec{x}}F_n, \mathcal{N}\left(0,m\right))$ tends to $0$ in probability on $(\mathbb{R}^\mathbb{N},\gamma^\mathbb{N})$.
  Thus, there exists a subsequence which converges for $\gamma^\mathbb{N}$-almost every vector $\vec{x}$.
  The result follows.
\end{proof}

 \begin{proof}[Proof of {\cref{supersharpCLT}}]
   By successive applications of \cref{supersharpCLTinduc}, there exists a subsequence $(F_{\phi(n)})$ such that for $\gamma^\mathbb{N}\otimes\cdots \otimes  \gamma^\mathbb{N}$-almost every sequences $(x_{1,i})_i,\cdots ,(x_{k,i})_i$,
   \begin{equation*}
     \sum_{1\le i_1,\ldots,i_k}\frac{\partial^k F_{\phi(n)}}{\partial N_{i_1}\cdots \partial N_{i_k}}(\vec{N}) x_{1,i_1}\cdots x_{k,i_k} \xrightarrow[n\to\infty]{\text{Law}}~\mathcal{N}\left(0,\sigma^2\right),
   \end{equation*}
where $\sigma^2=m(m-1)\cdots (m-k+1)$.
  Take a continuous and bounded function $h \colon \mathbb{R} \to \mathbb{R}$.
  By the previous convergence, we find that for $\gamma^\mathbb{N}\otimes\cdots \otimes  \gamma^\mathbb{N}$-almost every sequences $(x_{1,i})_i,\cdots ,(x_{k,i})_i$,
  \begin{equation*}
    \Esp*{ h\paren*{\sum_{1\le i_1,\ldots,i_k}\frac{\partial^k F_{\phi(n)}}{\partial N_{i_1}\cdots \partial N_{i_k}}(\vec{N}) x_{1,i_1}\cdots x_{k,i_k} }} \xrightarrow[n \to \infty]{} \int h(\sigma x) \gamma(\mathrm{d} x).
  \end{equation*}
  Integrating each of $x_{i,j}$'s with respect to $\gamma^{\mathbb{N}}$, we obtain, by dominated convergence, that
\begin{equation*}
  \Esp*{ h\paren*{\sharp^{k}[F_{\varphi(n)}]}} \xrightarrow[n \to \infty]{} \int h(\sigma x) \gamma(\mathrm{d} x).
\end{equation*}
This gives convergence in law of the subsequence.
Since this reasoning applies on every subsequence of $(F_n)$, we conclude on the convergence in law of the full sequence.
\end{proof}

\subsection{Proofs of the remaining theorems}
The following statement is slightly more precise than \cref{th:main-negative-moments-sum-chaos}; we use it for the proof of \cref{th:main-negative-moments-vector}.
Recall that we write $\mathsf{J}_{m}F$ for the projection of $F$ on the $m$-th Wiener chaos.
\begin{proposition}\label{sumofchaosquanti}
  Let $m$ and $q\in \mathbb{N}$.
  There exist $\delta>0$, $r>0$ and $C>0$ such that the following statement holds: for every $F\in\mathcal{W}_{\leq m}$
  \begin{equation*}
    \bracket*{d_{\mathrm{FM}}(\mathsf{J}_{m}F,\mathcal{N}(0,1))\le \delta} \Rightarrow \bracket*{\Esp*{ \Gamma\left[F,F\right]^{-q} }\le C \norm{F}_{L^2}^{r}}.
  \end{equation*}
\end{proposition}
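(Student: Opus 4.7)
The plan is to induct on $m$, adapting the compression strategy used to prove \cref{th:main-negative-moments} so as to accommodate the presence of lower-order chaos components. I would first strengthen the statement to a vector-valued version: for every $d \in \mathbb{N}$ and every $\vec{F} \in \mathcal{W}_{\leq m}^{d}$ satisfying $d_{\mathrm{FM}}((\mathsf{J}_{m} F_i)_i, \mathcal{N}(0, I_d)) \leq \delta$, the bound $\Esp*{\det\Gamma(\vec{F})^{-q}} \leq C \max_i \norm{F_i}_{L^2}^{r}$ holds. The scalar and vector versions are equivalent via a discretization lemma mirroring \cref{prop:dimupgrade}, using that for any $\vec{a} \in \mathbb{S}^{d-1}$ one has $\mathsf{J}_{m}(\vec{a} \cdot \vec{F}) = \vec{a}\cdot(\mathsf{J}_{m} F_i)_i$, which remains close to $\mathcal{N}(0,1)$ whenever $(\mathsf{J}_{m} F_i)_i$ is close to $\mathcal{N}(0,I_d)$. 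The base case $m=1$ reduces to a direct computation since $\Gamma[F,F]$ is deterministic and forced to be close to $1$.

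The inductive step follows the three-part structure of the proof of \cref{th:main-negative-moments}. First, I apply \cref{Hessianbound} after rescaling $F$ by $\norm{F}_{L^2}$; the homogeneity of $\mathcal{R}_{q'}$ produces explicit powers of $\norm{F}_{L^2}$, yielding an estimate of the form
\[
  \Esp*{\Gamma[F,F]^{-q}} \leq C \norm{F}_{L^2}^{r_1} \Esp*{\mathcal{R}_{q'}(\nabla^{2} F)^{-1/2}}.
\]
Second, the Cauchy--Schwarz compression argument of \cref{choice2} provides a set $\mathcal{E}_{2} \subset M_{K,q'}(\mathbb{R})$ of Gaussian measure at least $2/3$ on which $\Esp*{\mathcal{R}_{q'}(\nabla^{2} F)^{-1/2}}^{2} \leq C\Esp*{\mathcal{R}_{q'}(\nabla^{2} F \cdot X)^{-1}}$. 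By \cref{th:rq-det-Malliavin}, the latter equals $\Esp*{\det\Gamma(\mathsf{D}_{X} F)^{-1}}$. The vector $\mathsf{D}_{X} F$ lies in $\mathcal{W}_{\leq m-1}^{q'}$, and since $\mathsf{D}_X$ lowers the chaos index by one, its top-chaos projection is $\mathsf{J}_{m-1}(\mathsf{D}_X F) = \mathsf{D}_X \mathsf{J}_{m} F$, while all projections $\mathsf{D}_X \mathsf{J}_l F$ for $l < m$ contribute only to $\mathcal{W}_{\leq m-2}$.

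A quantitative version of \cref{almostCLT} applied to the single pure-chaos element $\mathsf{J}_{m} F$ (close to $\mathcal{N}(0,1)$ by hypothesis) then furnishes a further set $\mathcal{E}_{1}$ of Gaussian measure at least $2/3$ on which $d_{\mathrm{FM}}(\mathsf{D}_X\mathsf{J}_{m} F,\mathcal{N}(0,m I_{q'}))\leq \delta'$, with $\delta' \to 0$ as $\delta \to 0$. Choosing $\delta$ small enough, any $X \in \mathcal{E}_{1}\cap\mathcal{E}_{2}$ is admissible for both estimates, and the vector inductive hypothesis at level $m-1$ yields $\Esp*{\det\Gamma(\mathsf{D}_X F)^{-1}}\leq C \max_{i}\norm{\mathsf{D}_{\vec{x}_{i}}F}_{L^2}^{r_2}$. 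The elementary bound $\norm{\mathsf{D}_{\vec{x}} F}_{L^2} \leq \sqrt{m}\, \norm{\vec{x}} \,\norm{F}_{L^2}$, a consequence of $\Esp*{\Gamma[F,F]} \leq m\norm{F}_{L^2}^{2}$ and Cauchy--Schwarz, then chains the estimates together to deliver the announced polynomial control of $\Esp*{\Gamma[F,F]^{-q}}$ by $C \norm{F}_{L^2}^{r}$.

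The main obstacle is preserving uniformity in the number $K$ of underlying generators, since the Gaussian measure on $M_{K,q'}(\mathbb{R})$ concentrates around $\norm{\vec{x}_{i}} \sim \sqrt{K}$. This must be addressed by intersecting $\mathcal{E}_{1}\cap\mathcal{E}_{2}$ with a further large-measure event that controls $\norm{\vec{x}_{i}}$ and verifying that the $K$-dependence is neutralized by the rescaling in the Hessian step, or alternatively by selecting the Malliavin direction from a $K$-independent Hilbert space in the spirit of the Bouleau construction highlighted in the introduction. Once $K$-uniformity is secured, a density argument extends the estimate from $\mathcal{W}_{\leq m}^{(0)}$ to all of $\mathcal{W}_{\leq m}$.
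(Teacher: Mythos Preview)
Your approach differs substantially from the paper's, which is much shorter. The paper observes that the iterated sharp operator of maximal order annihilates all lower chaoses: since $\sharp^m$ involves $m$ partial derivatives in the $N$-variables, $\sharp^m[F] = \sharp^m[\mathsf{J}_m F]$ for every $F \in \mathcal{W}_{\leq m}$. One then applies \cref{supersharpnegmoments} to $F/\norm{F}_{L^2}$ to obtain
\[
\Esp*{\Gamma[F,F]^{-q}} \le C\,\norm{F}_{L^2}^{2(q'-q)}\,\Esp*{\Gamma[\sharp^m[F],\sharp^m[F]]^{-q'}},
\]
and the already-proven \cref{th:main-negative-moments}, together with \cref{supersharpCLT}, bounds the last factor since $\sharp^m[\mathsf{J}_m F]$ lies in a single chaos $\mathcal{W}_m$ and is close to a Gaussian. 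No new induction is needed; the proposition is essentially a corollary of the pure-chaos theorem. Your scheme instead re-runs the entire compression induction with $\norm{F}$-tracking, which is considerably more laborious.

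Your proposal also has a real gap at the point you flag. Controlling $\norm{\vec{x}_i}$ on a large-measure event cannot work: under $\gamma_{K,q'}$ the norm $\norm{\vec{x}_i}$ concentrates around $\sqrt{K}$, so any event forcing it to be $O(1)$ has vanishing measure as $K \to \infty$. The quantity you actually need to bound is $\norm{\mathsf{D}_{\vec{x}_i} F}_{L^2}$ itself, not $\norm{\vec{x}_i}$. This is tractable because
\[
\int \norm{\mathsf{D}_{\vec{x}} F}_{L^2}^{2}\,\mathrm{d}\gamma_K(\vec{x}) = \Esp*{\Gamma[F,F]} \le m\,\norm{F}_{L^2}^{2},
\]
which is $K$-independent; Markov's inequality then produces a third large-measure set $\mathcal{E}_3$ on which $\max_i \norm{\mathsf{D}_{\vec{x}_i} F}_{L^2} \le C_0 \norm{F}_{L^2}$. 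With this correction (and adjusting the thresholds so that three sets intersect) your induction can be made to close, but the paper's route via $\sharp^m$ is both simpler and sidesteps the issue entirely.
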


\begin{proof}
  Let $F \in \mathcal{W}_{\leq m}$ and $\widetilde{F} \coloneq \mathsf{J}_{m}F$.
 By definition of the sharp operators, $\sharp^m[\widetilde{F}]=\sharp^m[F]$.
 Let $q \in \mathbb{N}$.
 Applying \cref{supersharpnegmoments} to $\norm{F}_{L^{2}}^{-1}F$, there exist $q' \in \mathbb{N}$ and $C > 0$ such that
\begin{equation*}
  \Esp*{ \Gamma\left[F,F\right]^{-q} }\le C\norm{F}_{L^2}^{r} \Esp*{ \Gamma\left[\sharp^m[F],\sharp^m[F]\right]^{-q'} }
\end{equation*}
where $r \coloneq 2(q'-q)$.
By \cref{supersharpCLT}, for every $\delta'>0$ there exists $\delta>0$ such that
\begin{equation*}
  d_{\mathrm{FM}}(\widetilde{F},\mathcal{N}(0,1))\le \delta \Rightarrow d_{\mathrm{FM}}(\sharp^m[\widetilde{F}],\mathcal{N}(0,\sigma^{2}))\le \delta'.
\end{equation*}
where $\sigma \coloneq m!^{\frac{1}{2}}$.
By \cref{th:main-negative-moments}, for every $q' \in \mathbb{N}$ there exist $\delta' > 0$ and $C > 0$ such that
\begin{equation*}
  d_{\mathrm{FM}}(\sharp^m[F],\mathcal{N}(0,\sigma^{2}))\le \delta'\Rightarrow \Esp*{ \Gamma\left[\sharp^m[F],\sharp^m[F]\right]^{-q'} }\le C .
\end{equation*}

Combining the three estimates above gives the result.

\end{proof}

\cref{th:main-negative-moments-sum-chaos} follows immediately.
Now we prove \cref{th:main-negative-moments-vector}.
In order to use \cref{prop:dimupgrade}, we prove the following lemma.

\begin{lemma}\label{th:negative-moments-sphere}
  Let $d \in \mathbb{N}^{*}$, and $m_1,\cdots , m_d \in \mathbb{N}^{*}$ and $q > 0$.
  There exist $\delta>0$ and $C>0$ such that the following statement holds:
  for every $\vec{F}=(F_1,\ldots,F_d) \in \mathcal{W}_{m_1} \times \ldots \times\mathcal{W}_{m_d}$ such that $d_{\mathrm{FM}}(\vec{F},\mathcal{N}(0,I_d))\le \delta$, for every $\vec{a}\in \mathbb{S}^{d-1}$, the variable $F_{\vec{a}}=\sum_{i=1}^{d} a_i F_i=\vec{F}\cdot\vec{a}$ satisfies
\begin{equation*}
  \Esp*{ \Gamma[F_{\vec{a}},F_{\vec{a}}]^{-q} }\le C.
\end{equation*}
\end{lemma}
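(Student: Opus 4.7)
The plan is to prove the lemma by induction on $M \coloneq \max_i m_i$. The base case $M = 1$ is immediate: each $F_i$ is then a linear form in i.i.d.\ standard Gaussians, so $\Gamma(\vec{F})$ is deterministic and, by \cref{th:fourth-moment-stein} combined with hypercontractivity, close to $I_d$ in operator norm whenever $\delta$ is small; hence $\Gamma[F_{\vec{a}},F_{\vec{a}}] = \vec{a}^{T}\Gamma(\vec{F})\vec{a}$ is a deterministic quantity uniformly bounded below in $\vec{a} \in \mathbb{S}^{d-1}$. For the inductive step I fix a threshold $\eta_0 > 0$ (to be chosen), let $I \coloneq \{i : m_i = M\}$, and set $\sigma \coloneq \bigl(\sum_{i \in I}a_i^{2}\bigr)^{1/2}$.

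When $\sigma \ge \eta_0$, I apply \cref{sumofchaosquanti} to $F_{\vec{a}}/\sigma \in \mathcal{W}_{\le M}$. The normalized top chaos $\mathsf{J}_M F_{\vec{a}}/\sigma = \sum_{i \in I}(a_i/\sigma) F_i$ is the image of $\vec{F}$ under a linear map with unit $\ell^2$-norm coefficients; since $1$-Lipschitz maps do not increase $d_{\mathrm{FM}}$, one has $d_{\mathrm{FM}}(\mathsf{J}_M F_{\vec{a}}/\sigma, \mathcal{N}(0,1)) \le d_{\mathrm{FM}}(\vec{F}, \mathcal{N}(0, I_d)) \le \delta$. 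Taking $\delta$ below the threshold in \cref{sumofchaosquanti} (and noting $\norm{F_{\vec{a}}}_{L^2}$ is bounded, since each $\norm{F_i}_{L^2}$ is close to $1$), the proposition together with the scaling identity $\Gamma[cF,cF]=c^{2}\Gamma[F,F]$ gives $\Esp*{\Gamma[F_{\vec{a}},F_{\vec{a}}]^{-q}} \le \eta_0^{-2q} C$.

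The case $\sigma < \eta_0$ is the delicate one. Here $\bigl(\sum_{i \in I^c}a_i^{2}\bigr)^{1/2} \ge \sqrt{1-\eta_0^{2}}$ is bounded below, the truncation $\vec{F}_{I^c} \coloneq (F_i)_{i \in I^c}$ lies in chaoses of strict maximum degree $< M$, and $d_{\mathrm{FM}}(\vec{F}_{I^c}, \mathcal{N}(0, I_{|I^c|})) \le \delta$. The induction hypothesis applied to $\vec{F}_{I^c}$ and the unit vector $\vec{a}_{I^c}/\norm{\vec{a}_{I^c}}$ yields $\Esp*{\Gamma[F'_{\vec{a}},F'_{\vec{a}}]^{-q'}} \le C$ for every $q' \ge 1$, where $F'_{\vec{a}} \coloneq F_{\vec{a}} - \mathsf{J}_M F_{\vec{a}}$. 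The hard part is transferring this to $F_{\vec{a}}$, since the expansion
\[
  \Gamma[F_{\vec{a}}, F_{\vec{a}}] = \Gamma[F'_{\vec{a}},F'_{\vec{a}}] + 2\,\Gamma[F'_{\vec{a}}, \mathsf{J}_M F_{\vec{a}}] + \Gamma[\mathsf{J}_M F_{\vec{a}}, \mathsf{J}_M F_{\vec{a}}]
\]
is not pointwise bounded below by $\Gamma[F'_{\vec{a}},F'_{\vec{a}}]$. The Cauchy--Schwarz inequality yields $\Gamma[F_{\vec{a}},F_{\vec{a}}] \ge \bigl(\sqrt{\Gamma[F'_{\vec{a}},F'_{\vec{a}}]} - \sqrt{\Gamma[\mathsf{J}_M F_{\vec{a}}, \mathsf{J}_M F_{\vec{a}}]}\bigr)^{2}$, and combined with the hypercontractivity estimate $\norm{\Gamma[\mathsf{J}_M F_{\vec{a}}, \mathsf{J}_M F_{\vec{a}}]}_{L^p} \le C_p \sigma^{2}$ (since $\mathsf{J}_M F_{\vec{a}} \in \mathcal{W}_M$ has $L^2$-norm $O(\sigma)$) this gives $\Gamma[F_{\vec{a}},F_{\vec{a}}] \ge \Gamma[F'_{\vec{a}},F'_{\vec{a}}]/4$ outside a \enquote{bad} event whose probability is $O(\sigma^{N})$ for any $N$ (obtained by optimizing a Markov union bound using the inductive negative moments of $\Gamma[F'_{\vec{a}},F'_{\vec{a}}]$ and the hypercontractivity bound on $\Gamma[\mathsf{J}_M F_{\vec{a}}, \mathsf{J}_M F_{\vec{a}}]$). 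Controlling the bad-event contribution is the main technical step: I would combine its rapid polynomial decay in $\sigma$ with the Carbery--Wright small-ball bound for $\Gamma[F_{\vec{a}},F_{\vec{a}}]$ (a polynomial of degree $\le 2(M-1)$ in standard Gaussians) via a Hölder inequality tuned so the decay in $\sigma$ absorbs the weakness of the Carbery--Wright exponent, then pick $\eta_0$ small enough for all constants to close.
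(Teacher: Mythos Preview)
Your inductive scheme (on $M=\max_i m_i$, peeling off the top-degree block) is different from the paper's (which inducts on $d$, peeling off one coordinate at a time), but it is a reasonable route and your large-$\sigma$ case via \cref{sumofchaosquanti} is fine. The gap is in the small-$\sigma$ case, specifically in your plan to control the bad-event contribution $\Esp*{\Gamma[F_{\vec a},F_{\vec a}]^{-q}\mathbf 1_{\mathrm{bad}}}$ through Carbery--Wright. The Carbery--Wright inequality only gives $\Prob*{\Gamma[F_{\vec a},F_{\vec a}]\le\epsilon}\le C\epsilon^{\beta}$ with $\beta=\tfrac{1}{2(M-1)}$, so $\Gamma[F_{\vec a},F_{\vec a}]^{-q'}$ is integrable \emph{only} for $q'<\beta$. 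No H\"older splitting can manufacture a bound on $\Esp*{\Gamma^{-q}\mathbf 1_{\mathrm{bad}}}$ for general $q$ out of this: in a layer-cake computation the integral $\int_0^{\epsilon_0}\epsilon^{-q-1}\epsilon^{\beta}\,d\epsilon$ diverges whenever $q\ge\beta$, regardless of how small $\sigma$ (hence $\epsilon_0$) is. The $O(\sigma^N)$ decay of $\Prob*{\mathrm{bad}}$ cannot rescue this, because for a \emph{fixed} $\vec a$ with $0<\sigma<\eta_0$ you still need small-ball control down to arbitrarily small $\epsilon$, and Carbery--Wright never improves.

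The fix is already in your hands: your large-$\sigma$ argument via \cref{sumofchaosquanti} in fact applies for \emph{every} $\sigma>0$ (the hypothesis $d_{\mathrm{FM}}(\mathsf J_M F_{\vec a}/\sigma,\mathcal N(0,1))\le\delta$ holds uniformly in $\sigma$), yielding $\Esp*{\Gamma[F_{\vec a},F_{\vec a}]^{-Q}}\le C\sigma^{-s(Q)}$ for every $Q$, with $s(Q)$ depending on $Q$ but not on $\sigma$. Use this in place of Carbery--Wright in your H\"older step: $\Esp*{\Gamma^{-q}\mathbf 1_{\mathrm{bad}}}\le\Esp*{\Gamma^{-2q}}^{1/2}\Prob*{\mathrm{bad}}^{1/2}\le C\sigma^{-s(2q)/2}\sigma^{N/2}$, which is bounded once $N\ge s(2q)$. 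Alternatively, and more in the spirit of the paper, drop the fixed threshold $\eta_0$ altogether and work at the level of small-ball probabilities with an $\epsilon$-dependent threshold $\sigma\gtrless\epsilon^{\alpha}$: this is exactly how the paper balances the two regimes, and it avoids the detour through a bad event entirely.
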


\begin{proof}
We proceed by induction on $d$.
The case $d=1$ is \cref{th:main-negative-moments}.
We fix $d\ge 2$,  $m_1,\cdots , m_d$, $F_1,\ldots,F_d$ and $\vec{a}$ as in the statement.
We set $m \coloneq \max_i m_i$.
For $\epsilon>0$ we bound $\Prob*{ \Gamma[F_{\vec{a}},F_{\vec{a}}]\le \epsilon }$ in two different ways according to the relative size of $|a_d|$ compared to $\epsilon$.
Fix $q \in \mathbb{N}$ and $\epsilon\in (0,1/2)$, and set $\alpha \coloneq \min\left(\frac{q}{2r},1\right)$ where $r$ is given by \cref{sumofchaosquanti}.
\begin{itemize}[wide]
\item Assume $|a_d|\geq \epsilon^{\alpha}$.
Then, \cref{sumofchaosquanti} applied to $\frac{1}{a_d}F_{\vec{a}}$ implies that if $d_{\mathrm{FM}}(F_d,\mathcal{N}(0,1))$ is small enough then
\begin{equation*}
  \Esp*{ \Gamma\left[F_{\vec{a}},F_{\vec{a}}\right]^{-q} }\le C|a_d|^{-r}
\end{equation*}
for a constant $C=C_{d,m,q}$.
Thus,
\begin{equation*}
  \Prob*{ \Gamma[F_{\vec{a}},F_{\vec{a}}]\le \epsilon }\le C|a_d|^{-r}\epsilon^q\leq C\epsilon^{\frac{q}{2}}.
\end{equation*}
\item Assume $|a_d|\leq \epsilon^{\alpha}$.
  Define
  \begin{equation*}
F_{\vec{a}}' \coloneq \sum_{i=1}^{d-1} a_i F_i.
\end{equation*}
By using the induction hypothesis with some number $Q$ to be chosen later, we find a constant $C=C_{d,m,Q} > 0$ such that if $d_{\mathrm{FM}}(\vec{F}',\mathcal{N}(0,I_{d-1}))$ is small enough then
\begin{equation*}
  \Esp*{ \Gamma\left[F_{\vec{a}}',F_{\vec{a}}'\right]^{-Q} }\le C
\end{equation*}
By hypercontractivity \cref{eq:hypercontractivity}, we find another constant $C=C_{m} > 0$ such that 
\begin{equation*}
  \norm{ \Gamma[F_{\vec{a}}',F_{\vec{a}}']-\Gamma[F_{\vec{a}},F_{\vec{a}}] }_{L^Q}\leq C|a_d|\leq C\epsilon^\alpha.
\end{equation*}
Then, using that $\epsilon\leq \frac{1}{2}\epsilon^{\frac{\alpha}{2}}$ (since $\alpha\leq 1$ and $\epsilon<1/2$), we write
\begin{equation*}
\Prob*{ \Gamma[F_{\vec{a}},F_{\vec{a}}]\le \epsilon }\leq \Prob[\Big]{ \Gamma[F_{\vec{a}}',F_{\vec{a}}']\le \epsilon^{\frac{\alpha}{2}}  } + \Prob[\Big]{ \abs{ \Gamma[F_{\vec{a}}',F_{\vec{a}}']-\Gamma[F_{\vec{a}},F_{\vec{a}}] } \ge \tfrac{1}{2}\epsilon^{\frac{\alpha}{2}}  }.
\end{equation*}
where
\begin{align*}
  & \Prob*{ \Gamma[F_{\vec{a}}',F_{\vec{a}}']\le \epsilon^{\frac{\alpha}{2}} }\leq C\epsilon^{Q\frac{\alpha}{2}}
\\& \Prob*{ \abs{ \Gamma[F_{\vec{a}}',F_{\vec{a}}']-\Gamma[F_{\vec{a}},F_{\vec{a}}] } \ge \tfrac{1}{2}\epsilon^{\frac{\alpha}{2}} } \leq 
\paren*{ \frac{2 \norm{ \Gamma[F_{\vec{a}}',F_{\vec{a}}']-\Gamma[F_{\vec{a}},F_{\vec{a}}] }_{L^Q}}{\epsilon^{\frac{\alpha}{2}}} }^Q\leq C\epsilon^{Q\frac{\alpha}{2}}
\end{align*}
Choosing $Q$ such that $Q\alpha\ge q$, for instance, $Q \geq 4r$, we deduce that there exists $C=C_{d,m,q}$ such that 
\begin{equation*}
  \Prob*{ \Gamma[F_{\vec{a}},F_{\vec{a}}]\le \epsilon }\leq C\epsilon^{\frac{q}{2}}.
\end{equation*}
\end{itemize}

Combining the two cases, we obtained that for every $q\ge 0$ if $d_{\mathrm{FM}}(\vec{F},\mathcal{N}(0,I_d))$ is small enough there exists $C=C_{d,m,q}$ such that 
\begin{equation*}
   \Prob*{ \Gamma[F_{\vec{a}},F_{\vec{a}}]\le \epsilon }\le C\epsilon^{\frac{q}{2}}, \qquad \varepsilon > 0.
\end{equation*}
The conclusion follows.
\end{proof}

\begin{proof}[Proof of \cref{th:main-negative-moments-vector}].
  Let $m_1,\ldots, m_d \in \mathbb{N}^{*}$.
  Consider a sequence $(\vec{F}_n) \subset \mathcal{W}_{m_1}\times\cdots \times \mathcal{W}_{m_d}$ converging in law to $\mathcal{N}(0,I_d)$, and a sequence $(\vec{a}_n) \subset \mathbb{S}^{d-1}$.
  For every $q\ge 0$, we apply \cref{th:negative-moments-sphere} to $\vec{F}_n$, and we deduce that there exists $N \in \mathbb{N}$ such that
\begin{equation*}
  \sup_{n\geq N}\Esp*{ \Gamma[\vec{F}_n\cdot \vec{a}_n, \vec{F}_n\cdot \vec{a}_n]^{-q} }<+\infty, \qquad n \geq N.
\end{equation*}
Then, by \cref{prop:dimupgrade}, for every $q\ge 0$ there exists $N \in \mathbb{N}$ such that
\begin{equation*}
   \Esp*{ \det\Gamma (\vec{F}_n)^{-q}  }\leq C, \qquad n \geq N.
\end{equation*}
\end{proof}

\begin{acks}[Acknowledgments]
The authors are grateful to G.\ Cébron for suggesting the reference \cite{BercoviciVoiculescu} regarding superconvergence in the free central limit theorem.
  The authors are indebted to the anonymous referee.
  Their thorough review and thoughtful comments have greatly helped us in the preparation of the final version of this paper.
\end{acks}
\begin{funding}
R.H.\ gratefully acknowledges funding from Centre Henri Lebesgue (ANR-11-LABX-0020-01) through a research fellowship in the framework of the France 2030 program.
This work was supported by the ANR grant UNIRANDOM, (ANR-17-CE40-0008).
\end{funding}

\bibliographystyle{imsart-number.bst} % Style BST file (imsart-number.bst or imsart-nameyear.bst)
\bibliography{super_fmt.bib}       % Bibliography file (usually '*.bib')

\begin{thebibliography}{46}
% BibTex style file: imsart-number.bst, 2017-11-03
% Default style options (sort=1,type=number).
% Used options (sort=1,type=number).

\bibitem{ArtsteinBallBartheNaorEntropicCLT}
\begin{barticle}[author]
\bauthor{\bsnm{Artstein},~\bfnm{Shiri}\binits{S.}},
  \bauthor{\bsnm{Ball},~\bfnm{Keith~M.}\binits{K.~M.}},
  \bauthor{\bsnm{Barthe},~\bfnm{Franck}\binits{F.}} \AND
  \bauthor{\bsnm{Naor},~\bfnm{Assaf}\binits{A.}}
(\byear{2004}).
\btitle{On the rate of convergence in the entropic central limit theorem}.
\bjournal{Probab. Theory Relat. Fields}
\bvolume{129}
\bpages{381--390}.
\bdoi{10.1007/s00440-003-0329-4}
\end{barticle}
\endbibitem

\bibitem{AruGMC}
\begin{barticle}[author]
\bauthor{\bsnm{Aru},~\bfnm{Juhan}\binits{J.}},
  \bauthor{\bsnm{Jego},~\bfnm{Antoine}\binits{A.}} \AND
  \bauthor{\bsnm{Junnila},~\bfnm{Janne}\binits{J.}}
(\byear{2022}).
\btitle{Density of imaginary multiplicative chaos via {Malliavin} calculus}.
\bjournal{Probab. Theory Relat. Fields}
\bvolume{184}
\bpages{749--803}.
\bdoi{10.1007/s00440-022-01135-y}
\end{barticle}
\endbibitem

\bibitem{azmoodeh2014fourth}
\begin{barticle}[author]
\bauthor{\bsnm{Azmoodeh},~\bfnm{Ehsan}\binits{E.}},
  \bauthor{\bsnm{Campese},~\bfnm{Simon}\binits{S.}} \AND
  \bauthor{\bsnm{Poly},~\bfnm{Guillaume}\binits{G.}}
(\byear{2014}).
\btitle{Fourth moment theorems for {Markov} diffusion generators}.
\bjournal{J. Funct. Anal.}
\bvolume{266}
\bpages{2341--2359}.
\bdoi{10.1016/j.jfa.2013.10.014}
\end{barticle}
\endbibitem

\bibitem{azmoodeh2016generalization}
\begin{barticle}[author]
\bauthor{\bsnm{Azmoodeh},~\bfnm{Ehsan}\binits{E.}},
  \bauthor{\bsnm{Malicet},~\bfnm{Dominique}\binits{D.}},
  \bauthor{\bsnm{Mijoule},~\bfnm{Guillaume}\binits{G.}} \AND
  \bauthor{\bsnm{Poly},~\bfnm{Guillaume}\binits{G.}}
(\byear{2016}).
\btitle{Generalization of the {Nualart}-{Peccati} criterion}.
\bjournal{Ann. Probab.}
\bvolume{44}
\bpages{924--954}.
\bdoi{10.1214/14-AOP992}
\end{barticle}
\endbibitem

\bibitem{BercoviciVoiculescu}
\begin{barticle}[author]
\bauthor{\bsnm{Bercovici},~\bfnm{H.}\binits{H.}} \AND
  \bauthor{\bsnm{Voiculescu},~\bfnm{D.}\binits{D.}}
(\byear{1995}).
\btitle{Superconvergence to the central limit and failure of the {Cram{\'e}r}
  theorem for free random variables}.
\bjournal{Probab. Theory Relat. Fields}
\bvolume{103}
\bpages{215--222}.
\bdoi{10.1007/BF01204215}
\end{barticle}
\endbibitem

\bibitem{BobkovChistyakovGotzeEntropicCLT}
\begin{barticle}[author]
\bauthor{\bsnm{Bobkov},~\bfnm{Sergey~G.}\binits{S.~G.}},
  \bauthor{\bsnm{Chistyakov},~\bfnm{Gennadiy~P.}\binits{G.~P.}} \AND
  \bauthor{\bsnm{G{\"o}tze},~\bfnm{Friedrich}\binits{F.}}
(\byear{2013}).
\btitle{Rate of convergence and {Edgeworth}-type expansion in the entropic
  central limit theorem}.
\bjournal{Ann. Probab.}
\bvolume{41}
\bpages{2479--2512}.
\bdoi{10.1214/12-AOP780}
\end{barticle}
\endbibitem

\bibitem{BogachevSurvey}
\begin{barticle}[author]
\bauthor{\bsnm{Bogachev},~\bfnm{V.~I.}\binits{V.~I.}}
(\byear{2022}).
\btitle{Chebyshev-{Hermite} polynomials and distributions of polynomials in
  {Gaussian} random variables}.
\bjournal{Theory Probab. Appl.}
\bvolume{66}
\bpages{550--569}.
\bdoi{10.1137/S0040585X97T990617}
\end{barticle}
\endbibitem

\bibitem{BogachevKosovZelenov}
\begin{barticle}[author]
\bauthor{\bsnm{Bogachev},~\bfnm{Vladimir~I.}\binits{V.~I.}},
  \bauthor{\bsnm{Kosov},~\bfnm{Egor~D.}\binits{E.~D.}} \AND
  \bauthor{\bsnm{Zelenov},~\bfnm{Georgii~I.}\binits{G.~I.}}
(\byear{2018}).
\btitle{Fractional smoothness of distributions of polynomials and a fractional
  analog of the {Hardy}-{Landau}-{Littlewood} inequality}.
\bjournal{Trans. Am. Math. Soc.}
\bvolume{370}
\bpages{4401--4432}.
\bdoi{10.1090/tran/7181}
\end{barticle}
\endbibitem

\bibitem{bouleau2003error}
\begin{bbook}[author]
\bauthor{\bsnm{Bouleau},~\bfnm{Nicolas}\binits{N.}}
(\byear{2004}).
\btitle{Error calculus for finance and physics. {The} language of {Dirichlet}
  forms.}
\bseries{De Gruyter Expo. Math.}
\bvolume{37}.
\bpublisher{Berlin: de Gruyter}.
\bdoi{10.1515/9783110199291}
\end{bbook}
\endbibitem

\bibitem{breuer1983central}
\begin{barticle}[author]
\bauthor{\bsnm{Breuer},~\bfnm{Peter}\binits{P.}} \AND
  \bauthor{\bsnm{Major},~\bfnm{Peter}\binits{P.}}
(\byear{1983}).
\btitle{Central limit theorems for non-linear functionals of {Gaussian}
  fields}.
\bjournal{J. Multivariate Anal.}
\bvolume{13}
\bpages{425--441}.
\bdoi{10.1016/0047-259X(83)90019-2}
\end{barticle}
\endbibitem

\bibitem{carbery2001distributional}
\begin{barticle}[author]
\bauthor{\bsnm{Carbery},~\bfnm{Anthony}\binits{A.}} \AND
  \bauthor{\bsnm{Wright},~\bfnm{James}\binits{J.}}
(\byear{2001}).
\btitle{Distributional and {{$L^q$}} norm inequalities for polynomials over
  convex bodies in {{$\mathbb{R}^n$}}}.
\bjournal{Math. Res. Lett.}
\bvolume{8}
\bpages{233--248}.
\bdoi{10.4310/MRL.2001.v8.n3.a1}
\end{barticle}
\endbibitem

\bibitem{CassFriz}
\begin{barticle}[author]
\bauthor{\bsnm{Cass},~\bfnm{Thomas}\binits{T.}} \AND
  \bauthor{\bsnm{Friz},~\bfnm{Peter}\binits{P.}}
(\byear{2010}).
\btitle{Densities for rough differential equations under {H{\"o}rmander}'s
  condition}.
\bjournal{Ann. Math. (2)}
\bvolume{171}
\bpages{2115--2141}.
\bdoi{10.4007/annals.2010.171.2115}
\end{barticle}
\endbibitem

\bibitem{chatterjee2009fluctuations}
\begin{barticle}[author]
\bauthor{\bsnm{Chatterjee},~\bfnm{Sourav}\binits{S.}}
(\byear{2009}).
\btitle{Fluctuations of eigenvalues and second order {Poincar{\'e}}
  inequalities}.
\bjournal{Probab. Theory Relat. Fields}
\bvolume{143}
\bpages{1--40}.
\bdoi{10.1007/s00440-007-0118-6}
\end{barticle}
\endbibitem

\bibitem{ChenPoly}
\begin{bincollection}[author]
\bauthor{\bsnm{Chen},~\bfnm{Louis H.~Y.}\binits{L.~H.~Y.}} \AND
  \bauthor{\bsnm{Poly},~\bfnm{Guillaume}\binits{G.}}
(\byear{2015}).
\btitle{Stein's method, {Malliavin} calculus, {Dirichlet} forms and the fourth
  moment theorem}.
In \bbooktitle{Festschrift Masatoshi Fukushima. In Honor of Masatoshi
  Fukushima's Sanju}
\bpages{107--130}.
\bpublisher{Hackensack, NJ: World Scientific}.
\bdoi{10.1142/9789814596534_0006}
\end{bincollection}
\endbibitem

\bibitem{GrossLogSob}
\begin{barticle}[author]
\bauthor{\bsnm{Gross},~\bfnm{Leonard}\binits{L.}}
(\byear{1976}).
\btitle{Logarithmic {Sobolev} inequalities}.
\bjournal{Am. J. Math.}
\bvolume{97}
\bpages{1061--1083}.
\bdoi{10.2307/2373688}
\end{barticle}
\endbibitem

\bibitem{hairer2011malliavin}
\begin{barticle}[author]
\bauthor{\bsnm{Hairer},~\bfnm{Martin}\binits{M.}}
(\byear{2011}).
\btitle{On {Malliavin}'s proof of {H{\"o}rmander}'s theorem}.
\bjournal{Bull. Sci. Math.}
\bvolume{135}
\bpages{650--666}.
\bdoi{10.1016/j.bulsci.2011.07.007}
\end{barticle}
\endbibitem

\bibitem{HMPreg}
\begin{bmisc}[author]
\bauthor{\bsnm{Herry},~\bfnm{Ronan}\binits{R.}},
  \bauthor{\bsnm{Malicet},~\bfnm{Dominique}\binits{D.}} \AND
  \bauthor{\bsnm{Poly},~\bfnm{Guillaume}\binits{G.}}
(\byear{2023}).
\btitle{Regularity of laws via Dirichlet forms -- Application to quadratic
  forms in independent and identically distributed random variables}.
\end{bmisc}
\endbibitem

\bibitem{CauchyBinet}
\begin{barticle}[author]
\bauthor{\bsnm{Hoffman},~\bfnm{Alan~J.}\binits{A.~J.}} \AND
  \bauthor{\bsnm{Wu},~\bfnm{Chai~Wah}\binits{C.~W.}}
(\byear{2016}).
\btitle{A simple proof of a generalized {Cauchy}-{Binet} theorem}.
\bjournal{Am. Math. Mon.}
\bvolume{123}
\bpages{928--930}.
\bdoi{10.4169/amer.math.monthly.123.9.928}
\end{barticle}
\endbibitem

\bibitem{HormanderHypoelliptic}
\begin{barticle}[author]
\bauthor{\bsnm{H{\"o}rmander},~\bfnm{Lars}\binits{L.}}
(\byear{1967}).
\btitle{Hypoelliptic second order differential equations}.
\bjournal{Acta Math.}
\bvolume{119}
\bpages{147--171}.
\bdoi{10.1007/BF02392081}
\end{barticle}
\endbibitem

\bibitem{HormanderAnaFunc}
\begin{bbook}[author]
\bauthor{\bsnm{H{\"o}rmander},~\bfnm{Lars}\binits{L.}}
(\byear{2003}).
\btitle{The analysis of linear partial differential operators. {I}:
  {Distribution} theory and {Fourier} analysis.},
\bedition{reprint of the 2nd edition 1990} ed.
\bseries{Class. Math.}
\bpublisher{Berlin: Springer}.
\end{bbook}
\endbibitem

\bibitem{horn2012matrix}
\begin{bbook}[author]
\bauthor{\bsnm{Horn},~\bfnm{Roger~A.}\binits{R.~A.}} \AND
  \bauthor{\bsnm{Johnson},~\bfnm{Charles~R.}\binits{C.~R.}}
(\byear{2012}).
\btitle{Matrix analysis.},
\bedition{2nd ed.} ed.
\bpublisher{Cambridge: Cambridge University Press}.
\end{bbook}
\endbibitem

\bibitem{hu2014convergence}
\begin{barticle}[author]
\bauthor{\bsnm{Hu},~\bfnm{Yaozhong}\binits{Y.}},
  \bauthor{\bsnm{Lu},~\bfnm{Fei}\binits{F.}} \AND
  \bauthor{\bsnm{Nualart},~\bfnm{David}\binits{D.}}
(\byear{2014}).
\btitle{Convergence of densities of some functionals of {Gaussian} processes}.
\bjournal{J. Funct. Anal.}
\bvolume{266}
\bpages{814--875}.
\bdoi{10.1016/j.jfa.2013.09.024}
\end{barticle}
\endbibitem

\bibitem{HNTXBreuerMAjor}
\begin{barticle}[author]
\bauthor{\bsnm{Hu},~\bfnm{Yaozhong}\binits{Y.}},
  \bauthor{\bsnm{Nualart},~\bfnm{David}\binits{D.}},
  \bauthor{\bsnm{Tindel},~\bfnm{Samy}\binits{S.}} \AND
  \bauthor{\bsnm{Xu},~\bfnm{Fangjun}\binits{F.}}
(\byear{2015}).
\btitle{Density convergence in the {Breuer}-{Major} theorem for {Gaussian}
  stationary sequences}.
\bjournal{Bernoulli}
\bvolume{21}
\bpages{2336--2350}.
\bdoi{10.3150/14-BEJ646}
\end{barticle}
\endbibitem

\bibitem{JohanssonRandomMatrix}
\begin{barticle}[author]
\bauthor{\bsnm{Johansson},~\bfnm{Kurt}\binits{K.}}
(\byear{1998}).
\btitle{On fluctuations of eigenvalues of random {Hermitian} matrices.}
\bjournal{Duke Math. J.}
\bvolume{91}
\bpages{151--204}.
\bdoi{10.1215/S0012-7094-98-09108-6}
\end{barticle}
\endbibitem

\bibitem{JohnsonBarronEntropicCLT}
\begin{barticle}[author]
\bauthor{\bsnm{Johnson},~\bfnm{Oliver}\binits{O.}} \AND
  \bauthor{\bsnm{Barron},~\bfnm{Andrew}\binits{A.}}
(\byear{2004}).
\btitle{Fisher information inequalities and the central limit theorem}.
\bjournal{Probab. Theory Relat. Fields}
\bvolume{129}
\bpages{391--409}.
\bdoi{10.1007/s00440-004-0344-0}
\end{barticle}
\endbibitem

\bibitem{ledoux2012chaos}
\begin{barticle}[author]
\bauthor{\bsnm{Ledoux},~\bfnm{M.}\binits{M.}}
(\byear{2012}).
\btitle{Chaos of a {Markov} operator and the fourth moment condition}.
\bjournal{Ann. Probab.}
\bvolume{40}
\bpages{2439--2459}.
\bdoi{10.1214/11-AOP685}
\end{barticle}
\endbibitem

\bibitem{ledoux2015stein}
\begin{barticle}[author]
\bauthor{\bsnm{Ledoux},~\bfnm{Michel}\binits{M.}},
  \bauthor{\bsnm{Nourdin},~\bfnm{Ivan}\binits{I.}} \AND
  \bauthor{\bsnm{Peccati},~\bfnm{Giovanni}\binits{G.}}
(\byear{2015}).
\btitle{Stein's method, logarithmic {Sobolev} and transport inequalities}.
\bjournal{Geom. Funct. Anal.}
\bvolume{25}
\bpages{256--306}.
\bdoi{10.1007/s00039-015-0312-0}
\end{barticle}
\endbibitem

\bibitem{LionsToscani}
\begin{barticle}[author]
\bauthor{\bsnm{Lions},~\bfnm{Pierre~Louis}\binits{P.~L.}} \AND
  \bauthor{\bsnm{Toscani},~\bfnm{Giuseppe}\binits{G.}}
(\byear{1995}).
\btitle{A strengthened central limit theorem for smooth densities}.
\bjournal{J. Funct. Anal.}
\bvolume{129}
\bpages{148--167}.
\bdoi{10.1006/jfan.1995.1046}
\end{barticle}
\endbibitem

\bibitem{malicet2016squared}
\begin{barticle}[author]
\bauthor{\bsnm{Malicet},~\bfnm{Dominique}\binits{D.}},
  \bauthor{\bsnm{Nourdin},~\bfnm{Ivan}\binits{I.}},
  \bauthor{\bsnm{Peccati},~\bfnm{Giovanni}\binits{G.}} \AND
  \bauthor{\bsnm{Poly},~\bfnm{Guillaume}\binits{G.}}
(\byear{2016}).
\btitle{Squared chaotic random variables: new moment inequalities with
  applications}.
\bjournal{J. Funct. Anal.}
\bvolume{270}
\bpages{649--670}.
\bdoi{10.1016/j.jfa.2015.10.013}
\end{barticle}
\endbibitem

\bibitem{Malliavin}
\begin{bmisc}[author]
\bauthor{\bsnm{Malliavin},~\bfnm{Paul}\binits{P.}}
(\byear{1978}).
\btitle{Stochastic calculus of variation and hypoelliptic operators}.
\bhowpublished{Proc. int. {Symp}. on stochastic differential equations, {Kyoto}
  1976, 195-263 (1978).}
\end{bmisc}
\endbibitem

\bibitem{mossel2010noise}
\begin{barticle}[author]
\bauthor{\bsnm{Mossel},~\bfnm{Elchanan}\binits{E.}},
  \bauthor{\bsnm{O'Donnell},~\bfnm{Ryan}\binits{R.}} \AND
  \bauthor{\bsnm{Oleszkiewicz},~\bfnm{Krzysztof}\binits{K.}}
(\byear{2010}).
\btitle{Noise stability of functions with low influences: invariance and
  optimality}.
\bjournal{Ann. Math. (2)}
\bvolume{171}
\bpages{295--341}.
\bdoi{10.4007/annals.2010.171.295}
\end{barticle}
\endbibitem

\bibitem{NourdinWebpage}
\begin{barticle}[author]
\bauthor{\bsnm{Nourdin},~\bfnm{Ivan}\binits{I.}}
\btitle{Malliavin Stein Webpage}.
\bnote{Accessed: 2024-01-11}.
\bdoi{https://sites.google.com/site/malliavinstein/home}
\end{barticle}
\endbibitem

\bibitem{nourdin2016fisher}
\begin{barticle}[author]
\bauthor{\bsnm{Nourdin},~\bfnm{Ivan}\binits{I.}} \AND
  \bauthor{\bsnm{Nualart},~\bfnm{David}\binits{D.}}
(\byear{2016}).
\btitle{Fisher information and the fourth moment theorem}.
\bjournal{Ann. Inst. Henri Poincar{\'e}, Probab. Stat.}
\bvolume{52}
\bpages{849--867}.
\bdoi{10.1214/14-AIHP656}
\end{barticle}
\endbibitem

\bibitem{NoudinNualartPoly}
\begin{barticle}[author]
\bauthor{\bsnm{Nourdin},~\bfnm{Ivan}\binits{I.}},
  \bauthor{\bsnm{Nualart},~\bfnm{David}\binits{D.}} \AND
  \bauthor{\bsnm{Poly},~\bfnm{Guillaume}\binits{G.}}
(\byear{2013}).
\btitle{Absolute continuity and convergence of densities for random vectors on
  {Wiener} chaos}.
\bjournal{Electron. J. Probab.}
\bvolume{18}
\bpages{19}.
\bnote{Id/No 22}.
\bdoi{10.1214/EJP.v18-2181}
\end{barticle}
\endbibitem

\bibitem{nourdin2009stein}
\begin{barticle}[author]
\bauthor{\bsnm{Nourdin},~\bfnm{Ivan}\binits{I.}} \AND
  \bauthor{\bsnm{Peccati},~\bfnm{Giovanni}\binits{G.}}
(\byear{2009}).
\btitle{Stein's method on {Wiener} chaos}.
\bjournal{Probab. Theory Relat. Fields}
\bvolume{145}
\bpages{75--118}.
\bdoi{10.1007/s00440-008-0162-x}
\end{barticle}
\endbibitem

\bibitem{NourdinPeccati}
\begin{bbook}[author]
\bauthor{\bsnm{Nourdin},~\bfnm{Ivan}\binits{I.}} \AND
  \bauthor{\bsnm{Peccati},~\bfnm{Giovanni}\binits{G.}}
(\byear{2012}).
\btitle{Normal approximations with {Malliavin} calculus. {From} {Stein}'s
  method to universality}.
\bseries{Camb. Tracts Math.}
\bvolume{192}.
\bpublisher{Cambridge: Cambridge University Press}.
\bdoi{10.1017/CBO9781139084659}
\end{bbook}
\endbibitem

\bibitem{NourdinPeccatiReinertSecondOrder}
\begin{barticle}[author]
\bauthor{\bsnm{Nourdin},~\bfnm{Ivan}\binits{I.}},
  \bauthor{\bsnm{Peccati},~\bfnm{Giovanni}\binits{G.}} \AND
  \bauthor{\bsnm{Reinert},~\bfnm{Gesine}\binits{G.}}
(\byear{2009}).
\btitle{Second order {Poincar{\'e}} inequalities and {CLTs} on {Wiener} space}.
\bjournal{J. Funct. Anal.}
\bvolume{257}
\bpages{593--609}.
\bdoi{10.1016/j.jfa.2008.12.017}
\end{barticle}
\endbibitem

\bibitem{nourdin2010multivariate}
\begin{barticle}[author]
\bauthor{\bsnm{Nourdin},~\bfnm{Ivan}\binits{I.}},
  \bauthor{\bsnm{Peccati},~\bfnm{Giovanni}\binits{G.}} \AND
  \bauthor{\bsnm{R{\'e}veillac},~\bfnm{Anthony}\binits{A.}}
(\byear{2010}).
\btitle{Multivariate normal approximation using {Stein}'s method and
  {Malliavin} calculus}.
\bjournal{Ann. Inst. Henri Poincar{\'e}, Probab. Stat.}
\bvolume{46}
\bpages{45--58}.
\bdoi{10.1214/08-AIHP308}
\end{barticle}
\endbibitem

\bibitem{nourdin2014entropy}
\begin{barticle}[author]
\bauthor{\bsnm{Nourdin},~\bfnm{Ivan}\binits{I.}},
  \bauthor{\bsnm{Peccati},~\bfnm{Giovanni}\binits{G.}} \AND
  \bauthor{\bsnm{Swan},~\bfnm{Yvik}\binits{Y.}}
(\byear{2014}).
\btitle{Entropy and the fourth moment phenomenon}.
\bjournal{J. Funct. Anal.}
\bvolume{266}
\bpages{3170--3207}.
\bdoi{10.1016/j.jfa.2013.09.017}
\end{barticle}
\endbibitem

\bibitem{NourdinRosinski}
\begin{barticle}[author]
\bauthor{\bsnm{Nourdin},~\bfnm{Ivan}\binits{I.}} \AND
  \bauthor{\bsnm{Rosi{\'n}ski},~\bfnm{Jan}\binits{J.}}
(\byear{2014}).
\btitle{Asymptotic independence of multiple {Wiener}-{It{\^o}} integrals and
  the resulting limit laws}.
\bjournal{Ann. Probab.}
\bvolume{42}
\bpages{497--526}.
\bdoi{10.1214/12-AOP826}
\end{barticle}
\endbibitem

\bibitem{nualart2006malliavin}
\begin{bbook}[author]
\bauthor{\bsnm{Nualart},~\bfnm{David}\binits{D.}}
(\byear{2006}).
\btitle{The {Malliavin} calculus and related topics.},
\bedition{2nd ed.} ed.
\bseries{Probab. Appl.}
\bpublisher{Berlin: Springer}.
\bdoi{10.1007/3-540-28329-3}
\end{bbook}
\endbibitem

\bibitem{nualart2008central}
\begin{barticle}[author]
\bauthor{\bsnm{Nualart},~\bfnm{D.}\binits{D.}} \AND
  \bauthor{\bsnm{Ortiz-Latorre},~\bfnm{S.}\binits{S.}}
(\byear{2008}).
\btitle{Central limit theorems for multiple stochastic integrals and
  {Malliavin} calculus}.
\bjournal{Stochastic Processes Appl.}
\bvolume{118}
\bpages{614--628}.
\bdoi{10.1016/j.spa.2007.05.004}
\end{barticle}
\endbibitem

\bibitem{nualart2005central}
\begin{barticle}[author]
\bauthor{\bsnm{Nualart},~\bfnm{David}\binits{D.}} \AND
  \bauthor{\bsnm{Peccati},~\bfnm{Giovanni}\binits{G.}}
(\byear{2005}).
\btitle{Central limit theorems for sequences of multiple stochastic integrals}.
\bjournal{Ann. Probab.}
\bvolume{33}
\bpages{177--193}.
\bdoi{10.1214/009117904000000621}
\end{barticle}
\endbibitem

\bibitem{peccati2005gaussian}
\begin{bincollection}[author]
\bauthor{\bsnm{Peccati},~\bfnm{Giovanni}\binits{G.}} \AND
  \bauthor{\bsnm{Tudor},~\bfnm{Ciprian~A.}\binits{C.~A.}}
(\byear{2005}).
\btitle{Gaussian limits for vector-valued multiple stochastic integrals}.
In \bbooktitle{S\'eminaire de probabilit\'es XXXVIII.}
\bpages{247--262}.
\bpublisher{Berlin: Springer}.
\bdoi{10.1007/978-3-540-31449-3_17}
\end{bincollection}
\endbibitem

\bibitem{shigekawa1980derivatives}
\begin{barticle}[author]
\bauthor{\bsnm{Shigekawa},~\bfnm{Ichiro}\binits{I.}}
(\byear{1980}).
\btitle{Derivatives of {Wiener} functionals and absolute continuity of induced
  measures}.
\bjournal{J. Math. Kyoto Univ.}
\bvolume{20}
\bpages{263--289}.
\bdoi{10.1215/kjm/1250522278}
\end{barticle}
\endbibitem

\bibitem{WignerSemiCircle}
\begin{barticle}[author]
\bauthor{\bsnm{Wigner},~\bfnm{Eugene~P.}\binits{E.~P.}}
(\byear{1955}).
\btitle{Characteristic vectors of bordered matrices with infinite dimensions}.
\bjournal{Ann. Math. (2)}
\bvolume{62}
\bpages{548--564}.
\bdoi{10.2307/1970079}
\end{barticle}
\endbibitem

\end{thebibliography}

\end{document}